\renewcommand{\aa}[1]{{#1}}
\newcommand{\om}[1]{{#1}}
\newcommand{\maybe}[1]{}
\newcommand{\vertiii}[1]{{\left\vert\kern-0.25ex\left\vert\kern-0.25ex\left\vert  #1 \right\vert\kern-0.25ex\right\vert\kern-0.25ex\right\vert}}
\crefname{alternative}{Alternative}{Alternatives}
\def\phi{\varphi}
\def\clj{C_{\ell j}}
\def\cjk{C_{jk}}
\def\ckl{C_{k \ell}}
\def\cjl{C_{\ell j}}
\def\ckj{C_{jk}}
\def\bkal{B_{k,b}^j}
\def\bjal{B_{j,b}^k}
\def\bkbl{B_{k,a}^j}
\def\bjbl{B_{j,a}^k}
\def\blak{B_{\ell,b}^j}
\def\bjak{B_{j,b}^\ell}
\def\blbk{B_{\ell,a}^j}
\def\bjbk{B_{j,a}^\ell}
\def\bkaj{B_{k,b}^\ell}
\def\blaj{B_{\ell,b}^k}
\def\bkbj{B_{k,a}^\ell}
\def\blbj{B_{\ell,a}^k}
\def\RR{\mathbb R}
\renewcommand{\epsilon}{\varepsilon}
\def\dphx{D_x\Phi_{h \tau}(x)}
\newtheoremstyle{}{}{}{}{}{}{}{ }{}
\numberwithin{equation}{section}
\theoremstyle{plain}
\newtheorem{theorem}{Theorem}[section]
\crefname{theorem}{Theorem}{Theorems}
\crefname{assumption}{Assumption}{Assumptions}
\newtheorem{cor}[theorem]{Corollary}
\crefname{cor}{Corollary}{Corollaries}
\newtheorem{definition}[theorem]{Definition}
\crefname{definition}{Definition}{Definitions}
\newtheorem{proposition}[theorem]{Proposition}
\Crefname{proposition}{Proposition}{Propositions}
\newtheorem{condition}{Condition}
\crefname{condition}{Condition}{Conditions}
\newtheorem{lemma}[theorem]{Lemma}
\crefname{lemma}{Lemma}{Lemmas}
\newtheorem{model}{Model}
\crefname{appendix}{Appendix}{Appendices}
\crefname{remark}{Remark}{Remarks}
\crefname{section}{Section}{Sections}
\newtheoremstyle{iremark}
  {}   % ABOVESPACE
  {}   % BELOWSPACE
  {\itshape}  % BODYFONT
  {}       % INDENT (empty value is the same as 0pt)
  {\itshape}  % HEADFONT
  {.}         % HEADPUNCT
  {5pt plus 1pt minus 1pt} % HEADSPACE
  {\thmname{#1}\thmnumber{ \itshape#2}\thmnote{ (#3)}} % CUSTOM-HEAD-SPEC
\theoremstyle{iremark}
\newtheorem{remark}[theorem]{Remark}
\newtheorem{alternative}{Alternative}
\DeclareMathOperator{\1}{\mathds{1}}
\DeclareMathOperator{\curl}{curl}
\DeclareMathOperator{\diverg}{div}
\DeclareMathOperator{\vol}{\textsc{v}}
\DeclareMathOperator{\Lie}{Lie}
\DeclareMathOperator{\ph}{\phantom{-}}
\DeclareMathOperator\supp{supp}
\newcommand{\pc}[1]{\left(#1\right)}
\newcommand{\pq}[1]{\left[#1\right]}
\newcommand{\eref}[1]{(\ref{#1})}
\title{Random splitting of fluid models:\\ Positive Lyapunov exponents}
\author{Andrea Agazzi$^{\star\sharp}$}
\author{Jonathan~C.~Mattingly$^{\star\diamond\dagger }$}
\author{Omar Melikechi$^\star$}
\address[$\star$]{Department of Mathematics, Duke  University, Durham, NC.}
\address[$\dagger$]{Department of Statistical Science, Duke
  University, Durham, NC.}
\address[$\diamond$]{Institute for Advanced Study, Princeton, NJ.}
\address[$\sharp$]{Department of Mathematics, University of Pisa, Pisa, IT.}
\begin{document}

\frenchspacing

\begin{abstract}
	In this paper, we apply the framework of \cite{Baxendale} to
        give sufficient conditions for the random splitting systems
        introduced in \cite{Agazzi} to have a positive top Lyapunov
        exponent. We verify these conditions for the randomly split
        conservative Lorenz-96 equations and randomly split Galerkin
        approximations of the 2D Euler equations on the torus. In
        doing so, we highlight particular structures in the equations
        such as shearing. Since a
        positive top Lyapunov exponent is an indicator of chaos which
        in turn is a feature of turbulence, our results show these
        randomly split fluid models have important characteristics of
        turbulent flow.
\end{abstract}

\maketitle

%%%%%%%%%%%%%%%%%%%%%%%%%%%%%%%%%%%%%%%%%%%%%

\section{Introduction}\label{sec:intro}

%%%%%%%%%%%%%%%%%%%%%%%%%%%%%%%%%%%%%%%%%%%%%

Operator splitting is a popular approach to numerically integrate the flow $\bar\Phi$ of a differential equation $\dot{x}=V(x)$ where $V$ is a vector field on $\mathbb{R}^D$. This method consists of expressing $V$ as a sum of simpler vector fields $\{V_j\}_{j=1}^n$ on $\mathbb{R}^D$ with flows $\varphi^{(j)}$ and approximating the flow $\bar\Phi$ by
\begin{align}\label{eq:splitting}
	\Phi(x,t) & \coloneqq \Phi_t(x)
	\coloneqq \varphi^{(n)}_{t_n}\circ\cdots\circ\varphi^{(1)}_{t_1}(x)
\end{align}
for a given choice of times $\{t_i\}_{i=1}^n$. In a simple version of such methods, $t_1 = \dots = t_n = h$ for $h \ll 1$ yields an approximation of the infinitesimal flow $\bar\Phi(x,h)$. To integrate $V$ on longer time intervals one concatenates \eqref{eq:splitting}, namely
$\Phi^m:\mathbb{R}^D\times\mathbb{R}^{mn}\to\mathbb{R}^D$ defined by
$\Phi^m(x,t)\coloneqq\Phi^m_t(x)\coloneqq\varphi^{(n)}_{t_{mn}}\circ\cdots\circ\varphi^{(1)}_{t_1}(x)$
with superscripts cycling in order from $1$ to $n$, again setting each
$t_i=h$. In \cite{Agazzi} we introduced a stochastic analog of
the above method called \textit{random splitting}. This stochastic
process is obtained by choosing the flow times \textit{randomly},
e.g., drawing $(\tau_j)_{j=1}^{\infty}$ independently and identically
distributed from a distribution $\rho$ with mean $1$ and
setting $t_j \equiv h \tau_j$, so that $\mathbb E(t_j) = h$. Similar to deterministic operator splitting, we proved in \cite{Agazzi} that under general conditions random splitting trajectories converge to the deterministic trajectories of $\bar\Phi$ almost surely as $h\to 0$. Thus random splitting is both a approximation method and a novel and useful way to introduce stochastic agitation.

In this paper we give sufficient conditions for a random splitting to have a positive \textit{top Lyapunov exponent}, \om{$\lambda_1(h)$, for all $h$ sufficiently small}. As detailed in \Cref{sec:exponents}, in our setting $\lambda_1(h)$ satisfies
\begin{align}\label{eq:top}
	\lambda_1(h) &= \lim_{m\to\infty}\tfrac{1}{m}\log\lVert D_x\Phi^m_{h\tau}(x)\rVert
\end{align}
for almost every $\tau$, where $\lVert\cdot\rVert$ is the operator norm. Our proof that $\lambda_1(h)>0$ involves adapting the framework of \cite{Baxendale}, which itself builds on ideas in \cite{Bougerol88b,Kifer86,FurstenbergKifer83,Guivarch80,FurstenbergKesten60},
to the random splitting setting. In particular, we are able
to reduce much of the work in proving positivity of the top Lyapunov
exponent for conservative systems to the verification of one
condition, the \textit{Lie bracket condition}, which roughly says the
random splitting dynamics can move in all directions in the vicinity
of any point at which it holds. A key step in our argument is proving that if the Lie bracket condition holds at a point and the vector fields of the random splitting are real analytic, then a positive power of the Markov transition kernel of the random splitting is strong Feller on a neighborhood of that point (\Cref{prop:split_feller}). We then show via results in \cite{Baxendale} that if the Lie bracket condition and some basic integrability criteria hold, and if $d\lambda_1(h)=\lambda_\Sigma(h)$ where $d$ is the dimension of the state space and $\lambda_\Sigma(h)$ is the sum of Lyapunov exponents, then either: (\textit{\Cref{alt:conformal}}) The dynamics are conformal with respect to some Riemannian structure on the state space or (\textit{\Cref{alt:subspaces}}) There exist proper subspaces of the tangent spaces to the state space that are invariant under the dynamics (Theorem \ref{thrm:main}). In particular, if $\lambda_\Sigma(h)=0$ then either $\lambda_1(h)>0$ or one of the two alternatives must hold. We then prove \Cref{alt:conformal} is ruled out by shearing and \Cref{alt:subspaces} is ruled out when the \om{Lie bracket condition holds for the \textit{lifted vector fields} $\widetilde V_j(x,\eta)=(V_j(x), DV_j(x)\eta)$ at a point $(x,\eta)$ in the tangent bundle of the state space. This is harder to verify in general than the Lie bracket condition for vector fields on the state space itself.}

Our primary motivation for developing the general results just described comes from the study of chaos and, ultimately, turbulence in fluid models. More specifically, we aim to explore the emergence of chaotic behavior in (the random splitting dynamics of) the following two models by proving their random splittings have a positive top Lyapunov exponent. Since a positive top Lyapunov exponent is an indicator of chaos which in turn is a precursor of turbulence, Theorems \ref{thrm:lorenz} and \ref{thrm:euler} below show these randomly split fluid models retain important characteristics of turbulent flow.

\begin{model}[\textit{Conservative Lorenz-96}]\label{mod:lorenz}
The {\normalfont conservative Lorenz-96 equations} describe the evolution of $x \in \mathbb{R}^n$  (here $D = n$) for $n\geq 4$ as
\begin{align}\label{eq:lorenz}
	\dot{x} & = V_{Lorenz}(x)
	\coloneqq \sum_{j=1}^n(x_{j+1}-x_{j-2})x_{j-1}e_j,
\end{align}
where $\{e_j\}_{j=1}^n$ is the
standard basis in $\mathbb{R}^n$ and indices are defined
periodically, i.e. $x_{-1}=x_{n-1}$, $x_0=x_n$, and $x_1=x_{n+1}$.
\end{model}

\begin{model}[\textit{2D Euler}]\label{mod:euler}
The {\normalfont $N$th-order Galerkin approximations of the Euler equations} on the 2-dimensional torus $\mathbb{T}$, approximating the behavior of the incompressible Euler equations
\begin{align}\label{e:eulerfull}
	\begin{cases}
		\partial_tu+(u\cdot\nabla)u = -\nabla p \\
		\diverg(u) \coloneqq \nabla\cdot u = 0,
	\end{cases}
\end{align}
where $u=(u_1,u_2):\mathbb{T}\times\mathbb{R}\to\mathbb{R}^2$ is the fluid velocity and $p:\mathbb{T}\times\mathbb{R}\to\mathbb{R}$ the fluid pressure. Concretely, the approximation is obtained by considering the Fourier transform $q(k,t)$ of the vorticity of $u$ and truncating the high-frequency modes with $|k|> N$. A precise description of this procedure and of the resulting ODE for $q$ is given in Section~\ref{sec:eulersplit}.
\end{model}

\noindent As detailed in \cite{Agazzi} and discussed in
\Cref{sec:models}, Models \ref{mod:lorenz} and \ref{mod:euler}
decompose into vector fields whose associated random splittings
preserve important physical properties of the original systems
such as energy conservation. In
particular, the split dynamics of both models are confined to submanifolds
of Euclidean space called \textit{$\mathcal{V}$-orbits}. Among these are certain ``typical'' $\mathcal{V}$-orbits called \textit{generic
  orbits} (\Cref{def:generic}), which are distinguished by having ``full measure'' in
  some sense. Applying the general results from above, we prove the
Lyapunov exponents of these random splittings on any generic orbit
$\mathcal{X}$ exist and are constant almost everywhere with
  respect to the volume form  on $\mathcal{X}$. Moreover, the
top Lyapunov exponent is positive. This is summarized in the following
theorems.

\begin{theorem}\label{thrm:lorenz}
\om{There exists $h_*>0$ such that} for every $n\geq 4$ the top Lyapunov exponent of the conservative Lorenz-96 random splitting on a generic
orbit is positive for all $h\in(0,h_*)$.
\end{theorem}

\begin{theorem}\label{thrm:euler}
\om{There exists $h_*>0$ such that} for every $N\geq 3$ the top Lyapunov exponent of the $N$th Galerkin approximated Euler random splitting on a generic orbit is positive for all $h\in(0,h_*)$.
\end{theorem}

\noindent The Lorenz and Euler random splittings are defined in Definitions \ref{def:lorenz_splitting} and \ref{def:euler_splitting}, respectively, and generic orbits are defined in \Cref{def:generic}. Proofs of Theorems \ref{thrm:lorenz} and \ref{thrm:euler} are in \Cref{sec:models2}.

%%%%%%%%%%%%%%%%%%%%%%%%%%%%%%%%%%%%%%%%%%%%%

\subsection*{Related work}

%%%%%%%%%%%%%%%%%%%%%%%%%%%%%%%%%%%%%%%%%%%%%

The random splitting defined in this work is an example of a
random dynamical system  \cite{Arnold98} or an integrated random
function \cite{DiaconisFreedman98}, and closely related to  piecewise deterministic
Markov process when the $\tau_j$ are chosen to be exponentially
distributed  and the ordering follows a Markov process \cite{Benaim}. We continue to describe the system as a
random splitting to highlight its underlying physical motivation,
i.e. its relation to $V$ and $\bar\Phi$ as above, and the
corresponding  physical structure that plays a central role in the
following analysis; however, many of the ideas can be applied to a
broader class of random dynamics.

It is natural to compare our work with \cite{BedrossianBlumenthalPunshonSmith2021,BedrossianPunshonSmith21} which consider the
related Lorenz-96 and Euler models with Brownian forcing and balancing dissipation. Here we work directly on the
conservative equations rather than removing dissipation and stochastic forcing through a limiting
procedure to approach the conservative dynamics. We expect this direct
approach to allow us to say more about the conservative dynamics.
Here the randomness is injected
through the random splitting and used mainly to make the dynamics
generic. We also expect this separation of the different roles of the
forcing to be useful. See \cite{Agazzi} for more discussion.

Our choice of how to inject randomness leads to a more ``elliptic''
dynamics in that noise directly affects most of the model's building blocks,
though the mechanism is arguably less disruptive than
elliptic additive Brownian forcing. While possibly more disruptive
than the ``minimally'' hypoelliptic\footnote{ The terms \textit{elliptic} and \textit{hypoelliptic} used here are analogous to the identical terms used in the partial differential equations literature.}  forcing considered in
\cite{BedrossianPunshonSmith21,hairermattingly06}, the random splitting
dynamics is likely more analytically tractable. For example, here we
are able to directly verify that the lifted dynamics on the projective
space satisfies the Lie bracket condition while in
\cite{BedrossianPunshonSmith21} (impressive) computer assisted algebra
is needed to verify the conditions. We are hopeful that this class of
models will lead to analyses in directions currently unfeasible for the more
ubiquitous models with Brownian forcing.

The regularity results and their proofs presented below are reminiscent of the probabilistic understanding, via Malliavin calculus, of H\"ormander's
classical results on hypoelliptic differential operators which require
Lie bracket conditions similar to those presented here
\cite{Hormander97}. Related results on the hypoellipticty of piecewise
deterministic Markov processes, though leading to slightly different
statements, can be found in \cite{Bakhtin,Benaim}.

%%%%%%%%%%%%%%%%%%%%%%%%%%%%%%%%%%%%%%%%%%%%%

\subsection*{Notation}

%%%%%%%%%%%%%%%%%%%%%%%%%%%%%%%%%%%%%%%%%%%%%

Define $\mathbb{R}_+\coloneqq (0,\infty)$, $\mathbb{R}_{\geq 0}\coloneqq [0,\infty)$, and, for $n\in\mathbb{N}$, $[n]\coloneqq \{1,\dots,n\}$. All other notation will be defined as it is introduced in the paper.

%%%%%%%%%%%%%%%%%%%%%%%%%%%%%%%%%%%%%%%%%%%%%

\subsection*{Organization of the paper}

%%%%%%%%%%%%%%%%%%%%%%%%%%%%%%%%%%%%%%%%%%%%%

In \Cref{sec:prelims} we introduce random splitting, the Lie
bracket condition, and Lyapunov exponents. In \Cref{sec:models} we define the conservative Lorenz-96 and Euler
equations and their random splittings, and apply to them the results
of \Cref{sec:prelims}. In \Cref{sec:main} we state our
main result, \Cref{thrm:main}, on Lyapunov exponents of general random
splittings. In \Cref{sec:feller} we prove the almost-sure
invertibility of a certain random matrix implies the strong Feller
property for a general class of transition kernels. We use this to
show the Lie bracket condition implies the strong Feller property for
analytic random splittings, which in turn proves our main general
result Theorem \ref{thrm:main}. In \Cref{sec:alternatives} we prove
shearing rules out \Cref{alt:conformal} and the Lie bracket condition
holding at a point in the tangent bundle of state space rules out
\Cref{alt:subspaces}. Finally, in \Cref{sec:models2} we use the
preceding results to prove \Cref{thrm:lorenz,thrm:euler}.

%%%%%%%%%%%%%%%%%%%%%%%%%%%%%%%%%%%%%%%%%%%%%

\section{Preliminaries}\label{sec:prelims}

We begin with our setting and preliminary results. \Cref{sec:splitting} introduces a general random splitting framework. \Cref{sec:lie} discusses the Lie bracket condition for random splittings and its relevant implications. \Cref{sec:exponents} introduces Lyapunov exponents of random splittings.

%%%%%%%%%%%%%%%%%%%%%%%%%%%%%%%%%%%%%%%%%%%%%%%%%5

\subsection{Random splitting}\label{sec:splitting}

%%%%%%%%%%%%%%%%%%%%%%%%%%%%%%%%%%%%%%%%%%%%%

Let $\mathcal{V}\coloneqq\{V_j\}_{j=1}^n$ be a family of
complete\footnote{A vector field is \textit{complete} if its flow
  curve starting from any point exists for all time.},
analytic\footnote{Throughout this paper \textit{analytic} means \textit{real analytic}.} vector fields on $\mathbb{R}^D$ with flows $\varphi^{(j)}$ and define $\Phi:\mathbb{R}^D\times\mathbb{R}^n\to\mathbb{R}^D$ by \eref{eq:splitting}
\begin{equ}
	\Phi(x,t)  \coloneqq \Phi_t(x)
	\coloneqq \varphi^{(n)}_{t_n}\circ\cdots\circ\varphi^{(1)}_{t_1}(x).
\end{equ}
Similarly, define
$\Phi^m:\mathbb{R}^D\times\mathbb{R}^{mn}\to\mathbb{R}^D$ by
$\Phi^m(x,t)\coloneqq\Phi^m_t(x)\coloneqq\varphi^{(n)}_{t_{mn}}\circ\cdots\circ\varphi^{(1)}_{t_1}(x)$
with superscripts cycling in order from 1 to $n$. For each $x$ in
$\mathbb{R}^D$ the \textit{$\mathcal{V}$-orbit of $x$} is
\begin{align}\label{eq:orbit}
	\mathcal{X}(x) & \coloneqq \big\{ \Phi^m(x,t) : m\geq 0, t\in \mathbb{R}^{mn}\big\}.
\end{align}
This is the set of points that can be reached from $x$ in any finite
number of steps and over all times, positive and negative. When
  $x$ is arbitrary or understood, we denote $\mathcal{X}(x)$ by
  $\mathcal{X}$. Since the $V_j$ are complete, the
$\mathcal V$-orbits corresponding to different $x$
either agree or are disjoint. Hence we have an equivalence relation $x \sim y$ if and only if $\mathcal{X}(x) =
\mathcal X(y)$, and the $\mathcal{V}$-orbits $\{\mathcal
X(x):x\in\mathbb{R}^D\}$ partition $\mathbb{R}^D$. Furthermore, definition \eqref{eq:orbit} is equivalent to the
definition of $\mathcal{V}$-orbits in control theory \cite{jurdjevic,
  sussmann}. A classic result from geometric control theory ensures every $\mathcal{X}$ is an analytic submanifold of $\mathbb{R}^D$ \cite{jurdjevic}. In particular, each $\mathcal{X}$ has a Riemannian structure induced by the ambient Euclidean structure on $\mathbb{R}^D$ and an associated volume form $\vol$, sometimes called \textit{Lebesgue or Hausdorff measure on $\mathcal{X}$}, which will serve as our reference measure on $\mathcal{X}$.

To introduce randomness, fix $h>0$ and let
$\tau=(\tau_j)_{j=1}^\infty$ be mutually independent
$\mathbb{R}$-valued random variables with mean $1$ and common
distribution $\rho$. We assume $\rho$ is absolutely
continuous with respect to Lebesgue measure and, by a slight abuse of
notation, denote its density by $\rho$ as well,
i.e. $\rho(dt)=\rho(t)dt$. We also assume the support of $\rho$, $\supp(\rho)$,
contains an interval $(0,\epsilon)$ for some $\epsilon>0$ \aa{and that the tails of $\rho$ have an exponential (or faster) decay at infinity. The latter assumptions were made in \cite{Agazzi} to guarantee controllability of the dynamics and thus the uniqueness of an invariant measure on $\mathcal X$, and convergence of the random splitting process to its deterministic limit, respectively. These assumptions will again be used in the proof of \Cref{thrm:main}.} A
canonical choice for $\rho$ is the exponential distribution with mean 1;
the uniform distribution on $(0,2)$ also satisfies our
assumptions. \aa{We also denote the density of $h \tau_j$ by $\rho_h(t)$, i.e. $\rho_h(t):= h \rho(h t)$.} Setting $h\tau\coloneqq (h\tau_j)_{j=1}^\infty$, the \textit{random splitting associated to $\mathcal{V}$}, or just \textit{random splitting}, is the sequence $\{\Phi^m_{h\tau}\}_{m=0}^\infty$ where $\Phi^0_{h\tau}$ is the identity and
\begin{align}\label{eq:phi}
	\Phi^m_{h\tau} & \coloneqq \varphi^{(n)}_{h\tau_{mn}}\circ\cdots\circ\varphi^{(1)}_{h\tau_{(m-1)n+1}}(\Phi^{m-1}_{h\tau}).
\end{align}
That is, starting from the current step, the next step of the sequence
is obtained by flowing by each $V_j$ for the random time $h \tau_j$ in order from $j=1$ to
$n$.

Independence of the $\tau_j$ together with \eqref{eq:orbit} imply
$\{\Phi^m_{h\tau}\}$ is a Markov chain on $\mathcal{X}$ whenever the
sequence starts in $\mathcal{X}$.
Throughout this paper we denote the transition kernel of this Markov chain by $P_h$. It acts on functions $f:\mathcal{X} \rightarrow \RR$ by
$P_h f (x)\coloneqq \mathbb{E} f(\Phi_{h\tau}(x))$ and on measures $\mu$ on $\mathcal{X}$ by
\begin{align*}
	\mu P_h (A) &\coloneqq \int_{\mathcal{X}} P_h(x,A) \mu(dx),
\end{align*}
where $P_h(x,A) \coloneqq P_h \1_A(x)$ for measurable subsets $A$ of
$\mathcal{X}$.\footnote{The indicator function $\1_A(x)$ is 1 if $x \in A$ and 0 if $x \not \in A$} A measure $\mu$ is \textit{$P_h$-invariant} if $\mu P_h=\mu$.

\begin{remark}[Convergence as $h \rightarrow 0$]
	The term {\normalfont random splitting} alludes to the fact
        that, under quite general conditions, trajectories of the
        random dynamics $\{\Phi^m_{h\tau}\}_{m=0}^\infty$ converge
        almost-surely as $h\to 0$ to the trajectory of the
        deterministic ordinary differential equation
        $\dot{x}=V(x)={\textstyle\sum_{j=1}^n} V_j(x)$. See
          \cite[Section 4]{Agazzi}. We call $V$ the {\normalfont true vector field} and $V_j$ the {\normalfont splitting vector fields}.
\end{remark}

%%%%%%%%%%%%%%%%%%%%%%%%%%%%%%%%%%%%%%%%%%%%%

\subsection{The Lie bracket condition}\label{sec:lie}

%%%%%%%%%%%%%%%%%%%%%%%%%%%%%%%%%%%%%%%%%%%%%

The \textit{Lie bracket condition} is a key condition on a family of vector
fields $\mathcal V$ that guarantees sufficient nondegeneracy of its dynamics. Specifically, fix a $d$-dimensional
$\mathcal V$-orbit $\mathcal{X}$ of a general random splitting as in Section \ref{sec:splitting}. Let
$\mathfrak{X}(\mathcal{X})$ denote the Lie algebra of smooth vector
fields on $\mathcal{X}$ and define $\Lie(\mathcal{V})$ to be the
smallest subalgebra of $\mathfrak{X}(\mathcal{X})$ containing
$\mathcal{V}$. For each $x\in\mathcal{X}$ the collection
$\Lie_x(\mathcal{V})\coloneqq\{V(x):V\in\Lie(\mathcal{V})\}$ is a
subspace of $T_x\mathcal{X}$.

\begin{definition}
  The {\normalfont Lie bracket condition} holds at $x\in\mathcal{X}$ if $\Lie_x(\mathcal{V})=T_x\mathcal{X}$.
\end{definition}

The Lie bracket condition is called the \textit{weak bracket
	condition} in \cite{Benaim} and \textit{Condition B} in
\cite{Bakhtin}. Both papers also consider a \textit{strong bracket
	condition} (\textit{Condition A}) which is used for results about
continuous time Markov processes and is therefore not needed here. The
Lie bracket condition has the following important consequence.

\begin{theorem}\label{thrm:submersion}
If the Lie bracket condition holds at $x_*$ then for every neighborhood $U$ of $x_*$ and every $T>0$ there exists $x\in U$, $\kappa\in\mathbb{N}$, and $t\in\mathbb{R}^{\kappa n}_+$ such that $\sum_{k=1}^{\kappa n} t_k\leq T$ and $t\mapsto \Phi^\kappa_t(x_*)=x$ is a submersion at $t$, i.e. $D_t\Phi^\kappa_t(x_*):T_t\mathbb{R}^{\kappa n}\to T_x\mathcal{X}$ is surjective.
\end{theorem}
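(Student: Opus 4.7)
The plan is to adapt the classical argument of Krener from geometric control theory, showing that under the Lie bracket condition the reachable set has nonempty interior in $\mathcal{X}$, and then using analyticity to arrange both the time budget $\sum_k t_k \leq T$ and proximity to $x_*$. Set $F_m(t) \coloneqq \Phi^m(x_*, t)$. A direct chain-rule computation gives
\[
\partial_{t_k} F_m(t) = \bigl((\psi_k)_* V_{j(k)}\bigr)\bigl(F_m(t)\bigr),
\]
where $j(k) \coloneqq ((k-1)\bmod n)+1$ is the index of the vector field used at step $k$ and $\psi_k \coloneqq \varphi^{(n)}_{t_{mn}} \circ \cdots \circ \varphi^{(j(k)+1)}_{t_{k+1}}$ is the subsequent composition of flows. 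Each $F_m$ is real analytic in $t$, so its rank attains its maximum on an open dense subset of $\mathbb{R}^{mn}_+$ and is lower semicontinuous on the closed orthant $\mathbb{R}^{mn}_{\geq 0}$.

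The core of the proof is to show the supremum $r_* \coloneqq \sup\{\mathrm{rank}\, D_t F_m(t) : m \in \NN, \; t \in \mathbb{R}^{mn}_+\}$ equals $d$. Suppose for contradiction $r_* < d$. Since $y \mapsto \dim \mathrm{Lie}_y(\mathcal{V})$ is lower semicontinuous and is bounded above by $d = \dim T_y \mathcal{X}$, the Lie bracket condition at $x_*$ implies $\dim \mathrm{Lie}_y(\mathcal{V}) = d$ on an open neighborhood $W \subseteq \mathcal{X}$ of $x_*$. Choose $m$ attaining $r_* = r_m$ and $t^0 \in \mathbb{R}^{mn}_+$ with $\sum_k t_k^0$ small enough that $x_0 \coloneqq F_m(t^0) \in W$ and $\mathrm{rank}\, D_t F_m(t^0) = r_*$; this is possible because $F_m(t) \to x_*$ as $t \to 0$ and the rank-$r_*$ set is open dense. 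The constant-rank theorem then produces an $r_*$-dimensional analytic submanifold $S \subset \mathcal{X}$ through $x_0$ coinciding with the image of $F_m$ near $t^0$. Extending by one more cycle, maximality of $r_*$ forces $\mathrm{rank}\, D_t F_{m+1} \leq r_*$ on $\mathbb{R}^{(m+1)n}_+$, and lower semicontinuity transfers this bound to the boundary point $(t^0, 0, \ldots, 0)$; evaluating the Jacobian there one sees $T_{x_0} S + \mathrm{span}\{V_1(x_0), \ldots, V_n(x_0)\}$ sits inside an $r_*$-dimensional subspace, so each $V_j(x_0) \in T_{x_0} S$. Repeating this argument for every $t$ near $t^0$ shows each $V_j$ is tangent to $S$ on all of $S$; tangent vector fields form a Lie subalgebra, so $\mathrm{Lie}_{x_0}(\mathcal{V}) \subseteq T_{x_0} S$, giving $\dim \mathrm{Lie}_{x_0}(\mathcal{V}) \leq r_* < d$, a contradiction with $x_0 \in W$.

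With $r_* = d$ established, the conclusion is immediate. Pick any $m$ with $r_m = d$; by analyticity the surjectivity locus $\{t \in \mathbb{R}^{mn}_+ : \mathrm{rank}\, D_t F_m(t) = d\}$ is open and dense, while the constraint set $\{t \in \mathbb{R}^{mn}_+ : \sum_k t_k < T,\; F_m(t) \in U\}$ is open and nonempty by continuity of $F_m$ at $t=0$; their intersection supplies the desired $(m, t, x = F_m(t))$. The main obstacle will be the rank-propagation step of the middle paragraph: one must choose $t^0$ small enough that $x_0$ lies in the neighborhood $W$ where the Lie bracket condition persists, and carefully use lower semicontinuity of rank to push the maximality bound from the open positive orthant to the boundary point $(t^0, 0, \ldots, 0)$, at which the new tangent directions $V_j(x_0)$ appear in the image of $D_t F_{m+1}$.
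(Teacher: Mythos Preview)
The paper does not supply its own proof of this theorem; it simply cites Theorem~3.1 in Jurdjevic and Theorem~5 in Bakhtin. Your argument is precisely the classical Krener-type proof underlying those references: assume the maximal attainable rank $r_*$ is less than $d$, use the constant-rank theorem to produce an $r_*$-dimensional submanifold $S$ through a point near $x_*$, show by appending a further cycle and using lower semicontinuity of rank at the boundary that every $V_j$ is tangent to $S$, and conclude that $\mathrm{Lie}_{x_0}(\mathcal{V})\subseteq T_{x_0}S$, contradicting the Lie bracket condition. The final density step via analyticity (the zero set of the rank-deficiency determinant has measure zero, hence the full-rank locus meets any nonempty open set) is also correct and matches exactly how the paper exploits analyticity elsewhere (Lemma~5.5 and Proposition~5.6). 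Your proof is sound and is effectively what the cited references do.
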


\begin{proof}
\om{Fix $T>0$ and a neighborhood $U$ of $x_*$. In our notation \cite[Theorem 5]{Bakhtin} says that for all $i,j\in [n]$ there exists $\kappa'>d$, a sequence of indices ${\bf i}=(i_1,\dots,i_{\kappa'+1})\in [n]^{\kappa'+1}$ with $i_1=i$ and $i_{\kappa'+1}=j$, and $t\in\mathbb{R}^{\kappa'+1}_+$ with $\sum_{k=1}^{\kappa'+1} t_k < T/2$ such that $F_{\bf i}:\mathbb{R}^{\kappa'+1}_+\to\mathcal{X}$ defined by $F_{\bf i}(t)=\varphi^{(i_{\kappa'+1})}_{t_{\kappa'+1}}\circ\cdots\circ\varphi^{(i_1)}_{t_1}(x_*)$ is a submersion at $t$. To adapt this to our setting in which the vector fields are taken in a specific order, first observe we can extend $F_{\bf i}$ to $\Phi^\kappa$ for some $\kappa\geq\kappa'$ by flowing for time zero in the appropriate slots. For example, if $n=2$ and $F_{\bf i}(t_1,t_2)=\varphi^{(1)}_{t_2}\circ\varphi^{(2)}_{t_1}(x_*)$, then $F_{\bf i}$ is equal to $\Phi^2_t(x_*)=\varphi^{(2)}_0\circ\varphi^{(1)}_{t_2}\circ\varphi^{(2)}_{t_1}\circ\varphi^{(1)}_0(x_*)$ where $t=(0,t_1,t_2,0)$. So if $F_{\bf i}$ is surjective at $(t_1,t_2)$, then $t\mapsto\Phi^2_t(x_*)$ is a submersion at $(0,t_1,t_2,0)$. In the general case we therefore have that there exists $\kappa$ and $t\in[0,\infty)^{\kappa n}$ such that $\sum_{k=1}^{\kappa n} t_k\leq T/2$ and $t\mapsto\Phi^\kappa_t(x_*)$ is a submersion at $t$. We then extend to strictly positive times by using the fact that surjectivity is an open condition. This can be seen, for example, by noting that $\lvert \det D_t\Phi^\kappa_t(x_*)D_t\Phi^\kappa_t(x_*)^*\rvert > 0$ at $t$, where $A^*$ denotes the transpose of a matrix $A$. By continuity there must be an open neighborhood $\Delta$ of $t$ such that $\lvert \det D_s\Phi^\kappa_t(x_*)D_t\Phi^\kappa_t(x_*)^*\rvert > 0$ for all $s\in U$. In particular, there must exist $s$ with strictly positive entries such that $\sum_{k=1}^{\kappa n} s_k < T$ and $t\mapsto D_t\Phi^\kappa_t(x_*)$ is a submersion at $s$.}
\end{proof}

\noindent Intuitively Theorem \ref{thrm:submersion}
says that if the Lie bracket condition holds at $x_*$ then, as a
consequence of surjectivity, the random splitting can move in any
infinitesimal direction from $x_*$ in arbitrarily small positive
times. In Section \ref{sec:feller}, we combine this fact with
analyticity of the $V_j$ to prove a certain almost-sure surjectivity of
the dynamics near $x_*$. This, in turn, allows us to show that for $\kappa$
as in Theorem \ref{thrm:submersion} the transition kernel $P^\kappa_h$ is
strong Feller\footnote{See \Cref{sec:feller} for a definition of strong Feller.} on a neighborhood of $x_*$ for every $h>0$.

%%%%%%%%%%%%%%%%%%%%%%%%%%%%%%%%%%%%%%%%%%%%%

\subsection{Lyapunov exponents}\label{sec:exponents}

%%%%%%%%%%%%%%%%%%%%%%%%%%%%%%%%%%%%%%%%%%%%%

Let $\{\Phi^m_{h\tau}\}$ be a random splitting on a $d$-dimensional orbit
$\mathcal{X}$ with transition kernel $P_h$ and ergodic
$P_h$-invariant measure $\mu$. Set $\log^+(a)\coloneqq\max\{\log a,0\}$ for $a>0$. If
\begin{align}\label{eq:int_cond}
	\mathbb{E}\int_\mathcal{X}\left(\log^+\lVert D_x\Phi_{h\tau}(x)\rVert + \log^+\lVert D_x\Phi_{h\tau}(x)^{-1}\rVert\right)\mu(dx) & < \infty,
\end{align}
then the multiplicative ergodic theorem guarantees
the existence of $d$ numbers $\lambda_1(h)\geq\cdots\geq\lambda_d(h)$, called the
\textit{Lyapunov exponents} of $\{\Phi^m_{h\tau}\}$, such that for
$\mu$-almost every $x\in\mathcal{X}$ and every $\eta\in T_x\mathcal{X}$,
\begin{align*}
	\lim_{m\to\infty}\tfrac{1}{m}\log \lVert D_x\Phi^m_{h\tau}(x)\eta\rVert & = \lambda_k(h)
\end{align*}
for some $k$ and almost every $\tau$. Here $D_x$ is the derivative in
$x$ and $\lVert\cdot\rVert$ is the norm on tangent spaces of
$\mathcal{X}$ induced by the Euclidean norm on
$\mathbb{R}^D$.
Moreover, the \textit{top Lyapunov exponent}, $\lambda_1(h)$, satisfies \eref{eq:top} and the \textit{sum of the Lyapunov exponents}, $\lambda_\Sigma(h)\coloneqq\lambda_1(h)+\cdots+\lambda_d(h)$, satisfies
\begin{align}\label{eq:sum}
	\lambda_\Sigma(h) &\coloneqq \lim_{m\to\infty}\tfrac{1}{m}\log\big\lvert \det\big(D_x\Phi^m_{h\tau}(x)\big)\big\rvert.
\end{align}

\noindent The top Lyapunov exponent captures the largest rate of separation
between trajectories $\{\Phi^m_{h\tau}(x)\}$ and
$\{\Phi^m_{h\tau}(y)\}$ for points $y$ infinitesimally close to $x$. A
positive value corresponds to exponential growth and is a hallmark of
chaos. The sum of the Lyapunov exponents captures the overall behavior
of volumes under the dynamics with $\lambda_\Sigma(h)>0$,
$\lambda_\Sigma(h)=0$, and $\lambda_\Sigma(h)<0$ indicative of expanding,
conservative, and contracting dynamics, respectively. As will be the case in
our examples, it is possible (and in some sense typical) to have
a system which conserves volumes
($\lambda_\Sigma(h)=0$) while having some expanding directions ($\lambda_1(h)>0$).

\aa{We now introduce \Cref{cond:bound}, which is used} throughout this paper. In particular, \Cref{lem:bound} says it implies \eqref{eq:int_cond} for all $h>0$ and hence that Lyapunov exponents exist and satisfy \eqref{eq:top} and \eqref{eq:sum}. \aa{The proof uses the exponential tails of $\rho$, which compensate for the growth in $t$ of the Jacobians of $\Phi_t(x)$.} Since the $V_j$ are smooth, \Cref{cond:bound} holds whenever $\mathcal{X}$ is bounded as a subset of $\mathbb{R}^D$.

\begin{condition}\label{cond:bound}
\om{The splitting vector fields $V_j$ and their first and second derivatives are uniformly bounded on $\mathcal{X}$. That is, there exists a constant $C$ such that
\begin{align}\label{eq:bound}
	\sup\{\lVert V_j\rVert, \lVert DV_j(x)\rVert, \lVert D^2V_j(x)\rVert : x\in\mathcal{X}, 1\leq j\leq n\} & \leq C
		< \infty.
\end{align}}
\end{condition}

\begin{lemma}\label{lem:bound}
If \Cref{cond:bound} holds then \eqref{eq:int_cond} holds for all $h>0$. \om{Furthermore, for any nonnegative integers $m_1$, $m_2$, and $m_3$ there exists $h_*>0$ (depending on $m_1$, $m_2$, and $m_3$) such that
\begin{align*}
	\sup\left\{\mathbb{E}\lVert D_x\Phi_{h\tau}(x)^{-1}\rVert^{m_1}\lVert D_x\Phi_{h\tau}(x)\rVert^{m_2}\lVert \partial_h D_x\Phi_{h\tau}(x)\rVert^{m_3} : x\in\mathcal{X}, h\in [0,h_*)\right\} &< \infty.
\end{align*}}
\end{lemma}

\begin{proof}
Fix $x\in\mathcal{X}$ and $t \in \mathbb{R}^n_+$ and let $C$ be as in \eqref{eq:bound}. \aa{By the chain rule,
\begin{align*}
	D_x\Phi_t(x) & = D_x\varphi^{(n)}_{t_n}\big(\Phi_t^{(1, n-1)}(x)\big)\cdots D_x\varphi^{(2)}_{t_2}\big(\Phi_t^{(1, 1)}(x)\big)D_x\varphi^{(1)}_{t_1}\big(x\big)
\end{align*}
where for $1 \leq a \leq b \leq n$ we define
\begin{align*}
	\Phi_t^{(a,b)}(x)\coloneqq \varphi^{(b)}_{t_b}\circ\cdots\circ\varphi^{(a)}_{t_a}(x)\,.
\end{align*}
}
	Therefore
\begin{align}\label{eq:submult}
	\lVert D_x\Phi_t(x)\rVert & \leq \prod_{j=1}^n \big\lVert D_x\varphi^{(j)}_{t_k}\big(x^{(j-1)}\big)\big\rVert.
\end{align}
Fix $j$ and set $\varphi=\varphi^{(j)}$. By Cauchy-Schwarz, for any unit vector $\eta \in T_x\mathcal{X}$,
\begin{align*}
	\partial_s\lVert D_x\varphi_s(x)\eta\rVert^2 & \leq 2\big\lVert DV_k(\varphi_s(x))\rVert\lVert D_x\varphi_s(x)\eta\rVert^2
		\leq 2C\lVert D_x\varphi_s(x)\eta\rVert^2.
\end{align*}
So by Gr\"onwall's inequality, $\lVert D_x\varphi_s(x)\eta\rVert\leq\exp(Cs)$. Since $j$, $x$, and $\eta$ were arbitrary, \eqref{eq:submult} gives
\begin{align}\label{eq:exp_bound}
	\sup_{x\in\mathcal{X}}\lVert D_x\Phi_{h\tau}(x)\rVert & \leq \exp\left(Ch\sum_{j=1}^n\tau_j\right),
\end{align}
which also holds for $\lVert D_x\Phi_{h\tau}(x)^{-1}\rVert$ since \om{$\Phi_t(x)^{-1}=\varphi^{(1)}_{-t_1}\circ\cdots\circ\varphi^{(n)}_{-t_n}(x)$ and neither \eqref{eq:submult} nor the Gr\"onwall estimate depend on the order of the vector fields or the signs of the times}. The integrability condition \eqref{eq:int_cond} follows immediately since $Ch\sum\tau_j$ is integrable against the exponential tails of $\rho$ for all $h>0$. \om{For the ``furthermore" part of the lemma, Cauchy-Schwarz inequality gives
\begin{equation}\label{eq:cauchy}
\begin{aligned}
	&\mathbb{E}\lVert D_x\Phi_{h\tau}(x)^{-1}\rVert^{m_1}\lVert D_x\Phi_{h\tau}(x)\rVert^{m_2}\lVert \partial_h D_x\Phi_{h\tau}(x)\rVert^{m_3} \\
		&\quad\leq \left(\mathbb{E}\lVert D_x\Phi_{h\tau}(x)^{-1}\rVert^{2 m_1}\lVert D_x\Phi_{h\tau}(x)\rVert^{2m_2}\right)^{\frac{1}{2}}\left(\mathbb{E}\lVert \partial_h D_x\Phi_{h\tau}(x)\rVert^{2m_3}\right)^{\frac{1}{2}}.
\end{aligned}
\end{equation}
By \eqref{eq:exp_bound} and the exponential tails of $\rho$, there exists $h_*>0$ such that
\begin{align}\label{eq:first_term}
	\sup_{x\in\mathcal{X}, h\in[0,h_*)}\left(\mathbb{E}\lVert D_x\Phi_{h\tau}(x)^{-1}\rVert^{2 m_1}\lVert D_x\Phi_{h\tau}(x)\rVert^{2m_2}\right)^{\frac{1}{2}} &< \infty.
\end{align}
To control $\left(\mathbb{E}\lVert \partial_h D_x\Phi_{h\tau}(x)\rVert^{2m_3}\right)^{\frac{1}{2}}$, for $i<j$ define $\Phi^{(i,j)}\coloneqq\varphi^{(j)}\circ\cdots\circ\varphi^{(i)}$ and observe
\begin{align*}
	\sup_{x \in  \mathcal X}\|\partial_h \dphx\| &= \sup_{x \in  \mathcal X}\|D_x\partial_h \Phi_{h \tau}(x)\| = \sup_{x \in  \mathcal X} \|D_x \sum_{j=1}^n \tau_j \partial_{h \tau_j} \Phi_{h \tau}(x)\|\\
		&= \sup_{x \in  \mathcal X} \|D_x \sum_{j=1}^n \tau_j  D_x\Phi_{h \tau}^{(j+1,n)}(\Phi_{h \tau}^{(1,j)}(x)) V_j(\Phi_{h \tau}^{(1,j)}(x))\|\\
		& \leq \sum_{j} \tau_j \sup_{x \in  \mathcal X} \|D_x \pc{D_x\Phi_{h \tau}^{(j+1,n)}(\Phi_{h \tau}^{(1,j)}(x)) V_j(\Phi_{h \tau}^{(1,j)}(x))} \|\\
		& \leq \sum_{j} \tau_j \sup_{x \in  \mathcal X}\Big( \| D_x^2 \Phi_{h \tau}^{(j+1,n)}(\Phi_{h \tau}^{(1,j)}(x)) [D_x\Phi_{h \tau}^{(1,j)}(x),V_j(\Phi_{h \tau}^{(j)}(x))] \|\\
		& \qquad \qquad \qquad \qquad+ \| D_x\Phi_{h \tau}^{(j+1,n)}(\Phi_{h \tau}^{(1,j)}(x)) D_xV_j(\Phi_{h \tau}^{(1,j)}(x))D_x\Phi_{h \tau}^{(1,j)}(x) \| \Big)\\
		& \leq \sum_{j} \tau_j \sup_{x \in \mathcal X} \Big(\|D_x^2 \Phi_{h \tau}^{(j+1,n)}(x)\|\|D_x\Phi_{h \tau}^{(1,j)}(x)\|\|V_j(x)\| \\
		&\qquad\qquad\qquad\qquad+\|D_x\Phi_{h \tau}^{(j+1,n)}(x)\|\| D V_j(x)\|\|D_x\Phi_{h \tau}^{(1,j)}(x)\|\Big).
\end{align*}
Since $\|V_j(x)\|, \| D V_j(x)\|$ are uniformly bounded by assumption and $\lVert D_x\Phi_{h\tau}(x)^{-1}\rVert$ and $\lVert D_x\Phi_{h\tau}(x)\rVert$ are controlled by the preceding argument, it remains only to control $\lVert D_x^2\Phi_{h\tau}(x)\rVert$. To that end, write
\begin{align*}
	\sup_{x \in \mathcal X}\|D_x^2 \Phi_{h \tau}^{(a,b)}(x)\| \leq \sum_{j=a}^b \sup_{x \in \mathcal X}\|D_x^2 \phi_{h \tau_j}^{(j)}(x)\| \prod_{k = a}^b \max\{1, \sup_{x \in \mathcal X} \|D_x \phi_{h \tau_k}^{(k)}(x)\|^2\}.
\end{align*}
Since all the terms in the product grow at most exponentially in $h\tau$ by the bounds above, we only have to show that the same holds for $\|D_x^2 \phi_{h \tau_j}^{(j)}(x)\|$. This is done as above with Gr\"onwall's inequality:
\begin{align*}
	\partial_s \|D_x^2\phi_{s}(x)\| & \leq  \|D_x(D V_j(\phi_s(x)) \cdot D_x\phi_s(x))\|\\
		&\leq \sup_{x' \in \mathcal X}\|D_x^2 V(x')\| \|D_x\phi_{s}(x)\|^{2} + \sup_{x' \in \mathcal X}\|D_x V(x')\| \|D_x^2\phi_{s}(x)\|\\
		 & \leq C e^{C s} + C \|D_x^2\phi_{s}(x)\|
\end{align*}
yielding $\|D_x^2\phi_s(x)\| \leq C' e^{C' s}$ as required. Thus, shrinking $h_*$ if necessary, we have
\begin{align*}
	\sup_{x\in\mathcal{X}, h\in[0,h_*)}\left(\mathbb{E}\lVert \partial_h D_x\Phi_{h\tau}(x)\rVert^{2m_3}\right)^{\frac{1}{2}} &< \infty,
\end{align*}
which together with \eqref{eq:cauchy} and \eqref{eq:first_term} yields the desired result.}
\end{proof}

%\begin{remark}\label{rmk:relation}
%\om{We will also consider random splittings $\{\Phi^{\kappa m}_{h\tau}\}_{m=0}^\infty$ for fixed $\kappa\in\mathbb{N}$. There is a simple relationship between the Lyapunov exponents of $\{\Phi^{\kappa m}_{h\tau}\}$ and $\{\Phi^m_{h\tau}\}$ when they exist, namely
%\begin{align*}
%	\lambda_k^{(\kappa)}(h) &\coloneqq \lim_{m\to\infty}\tfrac{1}{m}\log\lVert D_x\Phi^{\kappa m}_{h\tau}(x)\eta\rVert
%		= \kappa\lim_{m\to\infty}\tfrac{1}{\kappa m}\log \lVert D_x\Phi^{\kappa m}_{h\tau}(x)\eta\rVert
%		= \kappa\lambda_k(h).
%\end{align*}
%That is, the Lyapunov exponents $\lambda^{(\kappa)}_k$ of $\{\Phi^{\kappa m}_{h\tau}\}$ are $\kappa$ times the Lyapunov exponents of $\{\Phi^m_{h\tau}\}$. In particular, $\lambda_1(h)>0$ if and only if $\lambda^{(\kappa)}_1(h)>0$. Also $d\lambda_1(h)=\lambda_\Sigma(h)$ if and only if $d\lambda_1^{(\kappa)}(h)=\lambda^{(\kappa)}_\Sigma(h)$.}
%\end{remark}

%%%%%%%%%%%%%%%%%%%%%%%%%%%%%%%%%%%%%%%%%%%%%

\section{Motivating models: Conservative Lorenz-96 and 2d Euler}\label{sec:models}

Before giving
  our main results for general random splittings in Sections
  \ref{sec:main} and \ref{sec:feller}, in this section we motivate
  the framework of \Cref{sec:prelims} by applying it to the
  conservative Lorenz-96 and 2-dimensional Euler equations introduced in \Cref{sec:intro}. Specifically, in \Cref{sec:model_split} we
define random splittings of the conservative Lorenz-96 and
Galerkin approximated 2-dimensional Euler models. In
\Cref{sec:model_results} we discuss important features of these
models: their conservation laws, generic orbits, the existence of a
unique invariant measure on each generic orbit, and existence
of Lyapunov exponents.

\subsection{Random splittings of Lorenz and Euler}\label{sec:model_split}

\subsubsection*{Conservative Lorenz-96}\label{sec:lorenzsplit}
Recall the \textit{conservative Lorenz-96 equations} introduced in \eref{eq:lorenz},
\begin{equ}
	\dot{x} = V_{Lorenz}(x)
		\coloneqq \sum_{j=1}^n(x_{j+1}-x_{j-2})x_{j-1}e_j,
\end{equ}
where $n\geq 4$, $x\in\mathbb{R}^n$, $\{e_j\}_{j=1}^n$ is the standard basis in $\mathbb{R}^n$, and indices are defined periodically, i.e. $x_{-1}=x_{n-1}$, $x_0=x_n$ and $x_1=x_{n+1}$. The original vector field $V_{Lorenz}$ splits as
\begin{align}\label{eq:split_lorenz}
	V_{Lorenz} &= \sum_{j=1}^n V_j
		\quad\text{where}\quad
		V_j(x) \coloneqq x_{j-1}(x_{j+1}e_j-x_je_{j+1}).
\end{align}
Each vector field $V_j$ generates a rotation in the $(x_j,x_{j+1})$-plane with angular velocity $x_{j-1}$.

\begin{definition}[Lorenz splitting]\label{def:lorenz_splitting}
A {\normalfont random splitting of conservative Lorenz-96}, or just {\normalfont Lorenz splitting}, is any random splitting associated to the family $\mathcal{V}_{Lorenz}\coloneqq\{V_j\}_{j=1}^n$ defined in \eqref{eq:split_lorenz}.
\end{definition}

\subsubsection*{2D Euler} \label{sec:eulersplit}
For any $N\geq 3$ the \textit{$N$th Galerkin approximation of the Euler equations on the 2-dimensional torus}, $\mathbb{T}$, is given by
\begin{align}\label{eq:galerkin}
	\dot{q} & = V_{Euler}(q)
	\coloneqq -\left(\sum_{j+k+\ell=0} C_{k\ell}\bar{q}_k\bar{q}_\ell\right)e_j,
\end{align}
where $j$, $k$, $\ell$ range over $\mathbb{Z}^2_N\coloneqq \{j\in\mathbb{Z}^2:\max\{\lvert j_1\rvert, \lvert j_2\rvert\}\leq N\}$, $\{e_j : j\in\mathbb{Z}^2\}$ is the orthonormal basis $e_j(x)\coloneqq (2\pi)^{-1}\exp(ix\cdot j)$ of $L^2(\mathbb{T},\mathbb{C})$, $q\in\mathbb{C}^n$ with $n=4N(N+1)$, and
\begin{align}\label{eq:constants}
	C_{k\ell} & = \frac{\langle k,\ell^\perp\rangle}{4\pi}\left(\frac{1}{\lvert k\rvert^2} - \frac{1}{\lvert \ell\rvert^2}\right)
\end{align}
with $\ell^\perp\coloneqq (\ell_2,-\ell_1)$. \aa{Note that if $j+k+\ell = 0$ and $j,k,\ell$ are not all distinct then $C_{jk} = C_{k\ell} = C_{\ell j} = 0$, so the vector fields in this case are identically $0$ on all coordinates.} A detailed derivation of \eqref{eq:galerkin} is given in \cite{Agazzi}. The variable $q$ in \eqref{eq:galerkin} is the restriction of the Fourier transform of the vorticity, $\curl(u(x,t))$ from \eref{e:eulerfull} to the Fourier modes whose indices are in $\mathbb{Z}^2_N$. We will often regard $q$ as an element of $\mathbb{R}^{2n}$ and write $q=(a,b)$ where $a$ and $b$ are the real and imaginary parts of $q$, respectively.

The true vector field $V_{Euler}$ from \eqref{eq:galerkin} split further into real vector fields as
\begin{align}\label{split_euler}
	V_{Euler} & = \sum_{j+k+\ell=0} V_{a_ja_ka_\ell} + V_{a_jb_kb_\ell} + V_{b_ja_kb_\ell} + V_{b_jb_ka_\ell}
\end{align}
with indices ranging over $\mathbb{Z}^2_N$ and$V_{a_ja_ka_\ell}, V_{a_jb_kb_\ell}, V_{b_ja_kb_\ell}$, and $V_{b_jb_ka_\ell}$ given by
\begin{align}\label{eq:euler_fields}
	\begin{cases}
		\dot{a}_j = -C_{k\ell}a_ka_\ell \\
		\dot{a}_k = -C_{j\ell}a_ja_\ell \\
		\dot{a}_\ell = -C_{jk}a_ja_k
	\end{cases}
	\begin{cases}
		\dot{a}_j = C_{k\ell}b_kb_\ell \\
		\dot{b}_k = C_{j\ell}a_jb_\ell \\
		\dot{b}_\ell = C_{jk}a_jb_k
	\end{cases}
	\begin{cases}
		\dot{b}_j = C_{k\ell}a_kb_\ell \\
		\dot{a}_k = C_{j\ell}b_jb_\ell \\
		\dot{b}_\ell = C_{jk}b_ja_k
	\end{cases}
	\begin{cases}
		\dot{b}_j = C_{k\ell}b_ka_\ell \\
		\dot{b}_k = C_{j\ell}b_ja_\ell \\
		\dot{a}_\ell = C_{jk}b_jb_k,
	\end{cases}
\end{align}
\aa{respectively, with all other components equal to $0$. Here,} $a_j$ and $b_j$ are the real and imaginary parts of the $j$th Fourier mode $q_j$. The slight difference in the signs between \eref{eq:euler_fields} and \cite[Equation (6.7)]{Agazzi} is because here triples $(j,k,\ell)$ satisfy $j + k + \ell = 0$ rather than $j + k - \ell = 0$ as in that reference.

\begin{definition}[Euler splitting]\label{def:euler_splitting}
A {\normalfont random splitting of the $N$th Galerkin approximation of 2-dimensional Euler}, or just {\normalfont Euler splitting}, is any random splitting associated to the family $\mathcal{V}_{Euler}\coloneqq\{V_{a_ja_ka_\ell}, V_{a_jb_kb_\ell}, V_{b_ja_kb_\ell}, V_{b_jb_ka_\ell} : j,k,\ell\in\mathbb{Z}^2_N, j+k+\ell=0\}$ defined in \eqref{eq:euler_fields}.
\end{definition}

\begin{remark}
The splitting vector fields in Definitions \ref{def:lorenz_splitting} and \ref{def:euler_splitting} are unambiguous. However, we have said ``\textit{a random splitting of...}" rather than ``\textit{the random splitting of...}" because one still has to choose the common distribution of the random times \aa{and the order of the vector fields. This is especially important in the Euler case where there are multiple ``natural'' orderings for the interacting triples. We note that while the specific Lyapunov exponents of the system may depend on choice of ordering, our proof is unaffected by such a choice as long as it is preserved across cycles, reflecting the fact that the positivity of the top Lyapunov exponent is independent of ordering.}
\end{remark}

\begin{remark}[Shear isolating splitting]\label{rem:shearSplitting}
  An important feature of the above splittings is that they both
  maintain (and highlight) the shearing which is expected to be
  important in the original dynamics for producing many of the systems'
  dynamical features such as positive Lyapunov exponents. To see the
  shearing in \eqref{eq:split_lorenz} notice that the speed of
  rotation of $(x_j,x_{j+1})$ varies depending on the value of
  $x_{j-1}$. The same mechanism of
  nearby points rotating at different rates also produces shear in
  \eqref{eq:euler_fields}.
\end{remark}

\begin{remark}[Euler tops]
  As an interesting aside, we note that each of the four sets of equations given in
  \eqref{eq:euler_fields} are those of a classical
  Euler spinning top. Hence, \eqref{split_euler} gives the
  decomposition of the Euler equations for fluid motion into a
  collection of interacting Euler tops. See
  \cite{Gluhovsky_1993,Glukhovsky_1982} for investigations of related
  deterministic decompositions used to model turbulence.
\end{remark}

%%%%%%%%%%%%%%%%%%%%%%%%%%%%%%%%%%%%%%%%%%

\subsection{Preliminary results for splittings of Lorenz and Euler}\label{sec:model_results}

%%%%%%%%%%%%%%%%%%%%%%%%%%%%%%%%%%%%%%%%%%

The conservative Lorenz-96 equations \eqref{eq:lorenz} and Galerkin
approximated Euler equations \eqref{eq:galerkin} conserve Euclidean
norm on \aa{$\mathbb{R}^{2n}$ (for the latter, this is a consequence of $C_{jk} + C_{k\ell} + C_{\ell j} = 0$ for $j+k+\ell = 0$)}. So too do all the splitting vector fields in
$\mathcal{V}_{Lorenz}$ and $\mathcal{V}_{Euler}$. Thus in both random
splitting models the dynamics are confined to spheres centered at the
origin with the same radius as the initial data. Not all
splittings are guaranteed to have this property; we chose the
splittings $\mathcal{V}_{Lorenz}$ and $\mathcal{V}_{Euler}$ in part
because they conserve the Euclidean norm, as do the original dynamics.

By the above discussion the $\mathcal{V}_{Lorenz}$-orbit of any point $x$ satisfies
\begin{align*}
	\mathcal{X}(x) &\subseteq \big\{y\in\mathbb{R}^n : \lVert y\rVert = \lVert x\rVert\big\}
		\eqqcolon \mathcal{M}_{Lorenz}(x).
\end{align*}
While a similar statement holds for Euler, more can be said in this case. Indeed, in addition to the Euclidean norm which corresponds to enstrophy in Galerkin approximated Euler, \eref{e:eulerfull} conserves a second quantity corresponding to the energy of the original fluid system. In the variable $q=(a,b)$, the energy is given by $E(a,b)=\sum_{k \in \mathbb{Z}^2} \tfrac{|a_k|^2 + |b_k|^2}{|k|^2}$. Direct calculation shows $E(a,b)$ is also conserved by every splitting vector field in $\mathcal{V}_{Euler}$ \cite{Agazzi}. Thus the $\mathcal{V}_{Euler}$-orbit of any $q=(a,b)$ satisfies
\begin{align*}
	\mathcal{X}(q) &\subseteq \big\{(\alpha,\beta)\in\mathbb{R}^{2n} : \lVert(\alpha,\beta)\rvert=\lVert(a,b)\rVert, E(\alpha,\beta)=E(a,b)\big\}
		\eqqcolon \mathcal{M}_{Euler}(q),
\end{align*}
where we recall $n=4N(N+1)$ for the $N$th Galerkin approximation of Euler. Note the collections of manifolds $\{\mathcal{M}_{Lorenz}(x):x\in\mathbb{R}^n\}$ and $\{\mathcal{M}_{Euler}(q):q\in\mathbb{R}^{2n}\}$ foliate their respective ambient Euclidean spaces. Any $\mathcal{M}$ from either of these foliations further decomposes into $\mathcal V$-orbits of possibly different dimensions. However, the following result \aa{summarizing \cite[Propositions 5.2 and 6.6]{Agazzi}} says each $\mathcal{M}$ contains exactly one $\mathcal V$-orbit
 that has full measure, and is therefore open\footnote{Smooth submanifolds of $\mathcal{M}$ with nonzero codimension have volume measure zero. So any $\mathcal V$-orbit in $\mathcal{M}$ with nonzero measure must have the same dimension as, and therefore be open in, $\mathcal{M}$.}, in $\mathcal{M}$.

\begin{proposition}\label{prop:generic}\label{lem:XFullInM}
For every $\mathcal{M}_{Lorenz}(x)$ there exists a unique $\mathcal{V}_{Lorenz}$-orbit $\mathcal{X}$ with full measure in $\mathcal{M}_{Lorenz}(x)$, i.e. $\vol(\mathcal{X})=1$ where $\vol$ is the normalized volume form on $\mathcal{M}_{Lorenz}(x)$. Similarly, for every $\mathcal{M}_{Euler}(q)$ there exists a unique $\mathcal{V}_{Euler}$-orbit with full measure on $\mathcal{M}_{Euler}(q)$.
\end{proposition}

\begin{definition}[Generic orbit]\label{def:generic}
	The unique full-measure $\mathcal V$-orbits in $\mathcal{M}_{Lorenz}(x)$ and $\mathcal{M}_{Euler}(q)$ in \Cref{prop:generic} are called {\normalfont generic $\mathcal{V}_{Lorenz}$-orbits} and {\normalfont generic $\mathcal{V}_{Euler}$-orbits}, respectively. When there is no confusion, we also simply call them {\normalfont generic orbits}.
\end{definition}

The next result, \aa{summarizing \cite[Corollaries 5.3 and 6.7]{Agazzi}}, highlights a key feature of generic orbits, namely that the $\mathcal{V}_{Lorenz}$ and $\mathcal{V}_{Euler}$ splittings are ergodic on them. This follows from the fact that the Lie bracket condition holds on generic orbits\footnote{For both Lorenz and Euler the splitting vector fields alone span tangent spaces on generic orbits, i.e. there is no need for Lie brackets \cite[Proposition 5.2 and Lemma 6.15]{Agazzi}. So both models are ``elliptic" rather than just ``hypoelliptic."} of Lorenz and Euler and that Lebesgue measure in ambient space -- and therefore its disintegration onto generic orbits -- is $P_h$-invariant for all $h>0$.

\begin{theorem}\label{thrm:models_ergodic}
For any generic orbit $\mathcal{X}$ of either the Lorenz or Euler random splittings, there exists a unique $P_h$-invariant measure on $\mathcal{X}$ for every $h>0$. Furthermore, this measure is absolutely continuous with respect to the volume form $\vol$ on $\mathcal X$ and, respectively, on $\mathcal{M}_{Lorenz}(x)$ or $\mathcal{M}_{Euler}(q)$.
\end{theorem}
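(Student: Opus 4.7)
The proof strategy is to construct a $P_h$-invariant probability measure explicitly and then invoke \Cref{thrm:ergodic} for uniqueness and absolute continuity.

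For \textbf{existence}, the key observation is that every vector field in $\mathcal{V}_{Lorenz}$ and $\mathcal{V}_{Euler}$ is divergence-free on the ambient Euclidean space. For Lorenz, the two nonzero components of $V_j$ in \eqref{eq:split_lorenz} are $x_{j-1}x_{j+1}$ in slot $j$ and $-x_{j-1}x_j$ in slot $j+1$, each independent of its own coordinate, so $\diverg V_j = 0$. For Euler, each triple of equations in \eqref{eq:euler_fields} has every component depending only on the other two variables of the triple, again yielding zero divergence on $\mathbb{R}^{2n}$. Consequently each $\varphi^{(j)}_t$ preserves ambient Lebesgue measure, and because it also preserves the conserved quantities ($\lVert\cdot\rVert$ for Lorenz; $\lVert\cdot\rVert$ and $E$ for Euler), it restricts to a volume-preserving transformation of each compact leaf $\mathcal{M}$. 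Disintegrating ambient Lebesgue along the foliation via the coarea formula yields a smooth measure on $\mathcal{M}$ equivalent to $\vol$ and invariant under every $\varphi^{(j)}_t$; normalizing and restricting to the open, full-measure generic orbit $\mathcal{X} \subset \mathcal{M}$ produces a $P_h$-invariant probability measure on $\mathcal{X}$ that is absolutely continuous with respect to $\vol$.

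For \textbf{uniqueness}, the plan is to apply \Cref{thrm:ergodic}, whose hypotheses require (a) the Lie bracket condition at some $x_* \in \mathcal{X}$, and (b) positive-time reachability: for every $x \in \mathcal{X}$ there exist $m$ and $t \in \mathbb{R}^{mn}_+$ with $\Phi^m(x,t)=x_*$. Condition (a) was established for generic orbits of both models in \cite{Agazzi}, and by Nagano's theorem (\Cref{rmk:nagano}) it then holds at every point of $\mathcal{X}$.

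The \textbf{main obstacle} is condition (b). Since $\mathcal{X}$ is the $\mathcal{V}$-orbit of $x_*$, any $x\in\mathcal{X}$ reaches $x_*$ via a concatenation of positive- and negative-time flow segments, so one must convert the negative legs into positive ones. The plan is threefold. First, \Cref{thrm:submersion} applied at every point of $\mathcal{X}$ (valid by Nagano) shows the positive-time reachable set $\mathcal{A}^+(x)$ is open in $\mathcal{X}$. Second, since each $\varphi^{(j)}_t$ preserves finite volume on the compact $\mathcal{M}$, Poincar\'e recurrence lets us approximate any backward leg $\varphi^{(j)}_{-s}$ by a long forward leg $\varphi^{(j)}_{T-s}$ for suitably large $T$ and a full-measure set of starting points, yielding density of $\mathcal{A}^+(x)$ in $\mathcal{X}$. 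Third, the forward invariance of $\mathcal{A}^+(x)$ together with volume preservation forces its complement to have zero volume, which combined with openness gives $\mathcal{A}^+(x)=\mathcal{X}$, in particular placing $x_*$ in $\mathcal{A}^+(x)$. With (a) and (b) in hand, \Cref{thrm:ergodic} yields uniqueness on $\mathcal{X}$; the unique invariant measure is then the one constructed above and is absolutely continuous with respect to $\vol$ on $\mathcal{X}$, and since $\mathcal{X}$ is open and full-measure in $\mathcal{M}$, absolute continuity with respect to $\vol$ on $\mathcal{M}$ is immediate.
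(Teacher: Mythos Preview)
Your high-level strategy matches the paper exactly: existence via the divergence-free property and disintegration of ambient Lebesgue measure onto leaves (this is precisely what the paper sketches and records in \Cref{rmk:invariant}), and uniqueness via \Cref{thrm:ergodic} together with the Lie bracket condition from \cite{Agazzi}. The paper does not give a self-contained proof here; it simply cites \cite{Agazzi} for both the Lie bracket verification and the reachability input, so on that level your proposal is aligned with the paper.

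The genuine gap is in your argument for hypothesis~(b), positive-time reachability. Step~3 does not work as stated: forward invariance of the open set $\mathcal{A}^+(x)$ under volume-preserving flows only gives $\vol\big(\mathcal{A}^+(x)\setminus\varphi^{(j)}_t(\mathcal{A}^+(x))\big)=0$, not that the complement $\mathcal{X}\setminus\mathcal{A}^+(x)$ has zero volume. And even if you could show $\mathcal{A}^+(x)$ has full measure, an open full-measure subset of $\mathcal{X}$ need not equal $\mathcal{X}$; the complement can be a nonempty null set, so you still would not get a single $x_*$ lying in $\mathcal{A}^+(x)$ for \emph{every} $x$. Step~2 has the same defect you yourself flag: Poincar\'e recurrence only approximates backward legs by forward ones for almost every intermediate point, and you never explain how to upgrade this to the specific points along a given control path.

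For these particular models there is a more direct route that bypasses recurrence entirely. In the Lorenz splitting each $\varphi^{(j)}$ is a planar rotation with angular speed $x_{j-1}$: when $x_{j-1}\neq 0$ the flow is periodic, and when $x_{j-1}=0$ it is the identity, so in either case $\varphi^{(j)}_{-s}=\varphi^{(j)}_{T}$ for some $T>0$. Hence negative-time legs are exactly replaceable by positive-time legs and $\mathcal{A}^+(x)=\mathcal{X}$ immediately. For Euler the individual flows are Euler tops, whose generic orbits are again periodic, but separatrices are not; the reachability argument there is handled in \cite{Agazzi} by an explicit constructive procedure rather than by a soft recurrence argument, and you should cite that rather than the sketch in your steps~2--3.
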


\noindent \Cref{thrm:models_ergodic} requires the assumption in \Cref{sec:splitting} that the support of the time distribution $\rho$ contains an interval \aa{$[0,\epsilon)$}. Note \Cref{cond:bound} holds for both models by \Cref{lem:bound} since $\mathcal{V}$-orbits are bounded and the splitting vector fields are smooth. By the discussion in \Cref{sec:exponents} the Lyapunov exponents of any such splitting on a generic orbit exist and are constant for $\vol$-almost every $x$ and almost every sequence of times $\tau$.

\begin{remark}\label{rmk:invariant}
For both Lorenz and Euler, not only is Lebesgue measure in the ambient
space, and therefore its disintegration onto $\mathcal{V}$-orbits (intended in the classical measure-theoretic sense),
  invariant under $P_h$ for any $h>0$, but it is also invariant under
  $\Phi_{h\tau}$ for every $\tau$. This follows from
  the fact that the $\{V_j\}_{j=1}^n$ are divergence free in both models, and thus each $\phi_t^{(i)}$ preserves Lebesgue measure in the ambient space for every $t$.
\end{remark}
%%%%%%%%%%%%%%%%%%%%%%%%%%%%%%%%%%%%%%%%%%%%%

\section{Main result on Lyapunov exponents of general random splittings}\label{sec:main}

%%%%%%%%%%%%%%%%%%%%%%%%%%%%%%%%%%%%%%%%%%%%%

Consider as in \Cref{sec:prelims} a random splitting
$\{\Phi^m_{h\tau}\}$ on a $d$-dimensional $\mathcal{V}$-orbit
$\mathcal{X}$ with $\mathcal{V}=\{V_j\}_{j=1}^n$ a family of complete,
analytic vector fields on $\mathbb{R}^D$. Assume $\mu$ is an
absolutely continuous\footnote{\textit{Absolutely continuous} means
  absolutely continuous with respect to $\vol$ on $\mathcal{X}$ unless
  otherwise specified.} measure on $\mathcal{X}$ that is ergodic and
invariant with respect to $P_h$ for every $h>0$. Before stating the main result of this section,
\Cref{thrm:main}, we need two additional definitions. First, for every
positive integer $m$ define the \textit{pushforward} of $\mu$ by
$\Phi^m_{h\tau}$ to be the probability measure $\mu_m\coloneqq(\Phi^m_{h\tau})_\#\mu$ on
$\mathcal{X}$ given by
\begin{align*}
	\mu_m(B) &\coloneqq (\Phi^m_{h\tau})_\#\mu(B)
		\coloneqq \mu\left((\Phi^m_{h\tau})^{-1}(B)\right).
\end{align*}
Second, for probability measures $\nu$ and $\mu$ on $\mathcal{X}$
define the \textit{relative entropy\footnote{Relative entropy is also often called \textit{Kullback-Leibler divergence}.} of $\nu$ with respect to $\mu$} by
\begin{align*}
	D_{KL}(\nu\parallel\mu) &\coloneqq
		\begin{cases}
			\int_\mathcal{X} \frac{d\nu}{d\mu}(x)\log\left(\frac{d\nu}{d\mu}(x)\right) \mu(dx) & \text{if $\nu\ll\mu$} \\
			\infty & \text{otherwise},
		\end{cases}
\end{align*}
where $\nu\ll\mu$ means $\nu$ is absolutely continuous with respect to
$\mu$ and $d\nu/d\mu$ is the Radon-Nikodym derivative. Note $\mu_m$ is
random since it depends on $\tau$, so we can consider the
\textit{average relative entropy},
$\mathbb{E}D_{KL}(\mu_m\parallel\mu)$. The finiteness of this quantity is important in what follows.
 Whenever $\mu_m=\mu$ almost surely, as in both the Lorenz
 and Euler models, we have $\mathbb{E}D_{KL}(\mu_m\parallel\mu)=\mathbb{E}D_{KL}(\mu\parallel\mu)=0$. Another condition guaranteeing $\mathbb{E}D_{KL}(\mu_m\parallel\mu)<\infty$ is that $\log(d\mu/d\vol)\in L^1(\mu)$ since this and \eqref{eq:int_cond} together imply $\mathbb{E}D_{KL}(\mu_m\parallel\mu)=-m\lambda_\Sigma(h)$ \cite[Theorem 4.2]{Baxendale}.

\begin{theorem}\label{thrm:main}
Assume \Cref{cond:bound}, $\mathbb{E}D_{KL}(\mu_m\parallel\mu)<\infty$ for all $m$, and the Lie bracket condition holds at $x_*\in\supp(\mu)$. Then there exists $h_*>0$ such that for all $h\in(0,h_*)$, if $d\lambda_1(h)=\lambda_\Sigma(h)$ then there is an open set $U$ in $\mathcal{X}$ such that $\mu(U)=1$, \om{$\Phi^m_t(U)\subseteq U$ for all $m$ and $t\in\mathbb{R}^{mn}_{\geq 0}$}, and either

\begin{alternative}\label{alt:conformal}
There is a Riemannian structure\footnote{ A \textit{Riemannian structure on $U$} is a family $\{g_x:x\in U\}$ of inner products $g_x:T_x\mathcal{X}\times T_x\mathcal{X}\to\mathbb{R}$ on $T_x\mathcal{X}$.} $\{g_x:x\in U\}$ on $U$ such that
\begin{align}\label{eq:conformal}
	g_{\Phi^m_t(x)}(D_x\Phi^m_t(x)\eta,D_x\Phi^m_t(x)\xi) &= \om{(\det D_x\Phi^m_t(x))^{2/d}}g_x(\eta,\xi)
\end{align}
for all $m$, \aa{$t \in \mathbb R_{\geq 0}^{mn}$}, $x\in U$, and $\eta, \xi \in T_x\mathcal{X}$.
That is, $\Phi^m_t$ is conformal with respect to $\{g_x:x\in U\}$.
\end{alternative}

\begin{alternative}\label{alt:subspaces}
For every $x\in U$ there exist proper linear subspaces $E^1_x,\dots,E^p_x$ of $T_x\mathcal{X}$ such that
\begin{align}\label{eq:subspaces}
	D_x\Phi^m_t(x)(E^i_x) &= E^{\sigma(i)}_{\Phi^m_t(x)}
\end{align}
for all $m$, $t \in \aa{\mathbb R_{\geq 0}^{mn}}$, and $i$, where $\sigma$ is a permutation possibly depending on $m$, $t$, and $x$.
\end{alternative}
\end{theorem}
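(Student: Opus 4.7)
The plan is to apply the dichotomy theorem of Baxendale~\cite{Baxendale} to the random splitting on $\mathcal{X}$, and then use the Lie bracket condition together with analyticity of the $V_j$ to promote the almost-everywhere conclusion of that theorem to a statement holding on an open set. First, observe that the ordering $\lambda_1\geq\cdots\geq\lambda_d$ combined with the hypothesis $d\lambda_1=\lambda_\Sigma=\lambda_1+\cdots+\lambda_d$ forces $\lambda_1=\cdots=\lambda_d$. In this degenerate case, Baxendale's framework guarantees that the derivative cocycle $(x,\tau)\mapsto D_x\Phi^m_{h\tau}(x)$ admits one of two $P_h$-invariant measurable structures on $\mathcal{X}$: either a Riemannian metric with respect to which the cocycle acts conformally (\Cref{alt:conformal}), or a family of proper invariant subbundles of $T\mathcal{X}$ (\Cref{alt:subspaces}). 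The two hypotheses needed to invoke this dichotomy are (i) $\mathbb{E}D_{KL}(\mu_m\parallel\mu)<\infty$ for all $m$, which is assumed directly, and (ii) a regularity condition on the transition kernel, which in our setting amounts to some iterate $P_h^N$ being strong Feller on an open set meeting $\supp(\mu)$.

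Condition (ii) is precisely the content of \Cref{prop:split_feller}. Since the Lie bracket condition holds at $x_*\in\supp(\mu)$ and the splitting vector fields $V_j$ are analytic, \Cref{prop:split_feller} yields an integer $N$ and an open neighborhood $\mathcal{O}$ of $x_*$ on which $P_h^N$ is strong Feller. Combined with ergodicity of $\mu$ and absolute continuity with respect to $\vol$, this gives the setup required for Baxendale's theorem and produces a $\mu$-almost-everywhere invariant measurable field $g$ or $\{E^i\}$ satisfying the corresponding equivariance for $\rho^{\otimes mn}$-almost every $t$.

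To promote this structure to an open set $U$ of full $\mu$-measure on which the equivariance holds for \emph{all} $t\in\mathbb{R}^{mn}_+$, I would take
\begin{equation*}
U \coloneqq \bigcup_{m\geq 0,\, t\in\mathbb{R}^{mn}_+} \Phi^m_t(\mathcal{O}).
\end{equation*}
This set is open (each $\Phi^m_t$ is an analytic diffeomorphism of $\mathcal{X}$ onto its image), forward-invariant under the splitting, and has full $\mu$-measure by ergodicity combined with $\mu(\mathcal{O})>0$, which holds since $x_*\in\supp(\mu)$. On $\mathcal{O}$ itself, strong Feller regularity of $P_h^N$ expresses the invariant field as a conditional expectation of a bounded measurable functional and thereby renders it continuous; analyticity of each $\varphi^{(j)}$ then lifts continuity to real-analyticity. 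The transport identities \eqref{eq:conformal} and \eqref{eq:subspaces} are established on a set of $t$'s of positive $\rho^{\otimes mn}$-measure, and real-analytic continuation in $t$ extends them to all of $\mathbb{R}^{mn}_+$. The same transport formulas unambiguously define the field $g$ (or subspaces $E^i$) on all of $U$, with the cocycle relation guaranteeing consistency whenever a point is reached by two different finite compositions.

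The main obstacle in this plan is (ii), i.e., the strong Feller property of \Cref{prop:split_feller}. This requires passing from the local surjectivity of $t\mapsto\Phi^m(x_*,t)$ guaranteed by \Cref{thrm:submersion} to an almost-sure surjectivity of the derivative along sample paths near $x_*$, and then invoking a change-of-variables argument to transfer smoothness of $\rho$ to $P_h^N$; this is carried out in \Cref{sec:feller}. Once \Cref{prop:split_feller} is in hand, the remainder of the argument is a relatively routine translation of Baxendale's abstract dichotomy into the analytic and geometric language of random splittings, with ergodicity of $\mu$ producing a single invariant structure across the whole open set $U$ and analyticity upgrading almost-every-$\tau$ identities to everywhere identities in $t$.
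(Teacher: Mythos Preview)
Your overall strategy matches the paper's: reduce \Cref{thrm:main} to a Baxendale-type dichotomy (the paper's \Cref{thrm:baxendale}) by using \Cref{prop:split_feller} to extract the strong Feller property from the Lie bracket condition, then invoke \cite[Theorem~6.9]{Baxendale}.

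One point deserves more care. You write that ``strong Feller regularity of $P_h^N$ expresses the invariant field as a conditional expectation of a bounded measurable functional and thereby renders it continuous.'' This is the right destination but not quite the right mechanism. In Baxendale's discrete-time framework, obtaining a \emph{continuous} version of the conditional distributions $x\mapsto\nu_x$ on the projective bundle normally requires a regularity assumption on the \emph{lifted} kernel $\widetilde P_h$, which you have not established. The paper circumvents this via \Cref{lem:rcpd}: the key observation is that random splitting gives a \emph{continuous family} of Markov chains indexed by $h>0$, and $P_h$ is strong Feller on $U_0$ for \emph{every} $h$. One then uses the identity $\nu_x\widetilde P_h(f)=P_h(\nu f)(x)$ together with the limit $h\to 0$ to define a continuous version of $\nu_x$ on $U_0$. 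Without this $h\to 0$ trick (or some substitute), your sketch does not explain how continuity of $\nu_x$ follows from strong Feller of the base kernel alone.

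Your explicit construction of $U$ as the forward saturation of the strong Feller neighborhood, and your use of analyticity in $t$ to pass from almost-every to every $t$, are more detailed than the paper, which simply cites \cite[Theorem~6.9]{Baxendale} for that step; both are reasonable.
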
% AA: check this with support

%\begin{remark}\label{r:Rgeq0}
%	\aa{In general $\supp(\rho_h) \neq \mathbb R_{\geq 0}$, and hence the map $\Phi^m_t(x)$ in the above alternatives is not directly realizable by the random splitting for certain (or any) values of $h$. However, since $\supp(\rho)$ contains $(0,\epsilon)$ for some $\epsilon>0$, for every $h>0, i \in [n]$, and $t_i > 0$ there exists $m'$, $s = (s_1, \dots, s_{nm'}) \in \supp(\rho_h^{\otimes m'n})$ with all $s_j = 0$
%	except  for $j\mod(n) = i$, and with $\sum_j h s_j = t_i$. Therefore $\Phi_{hs}^{m'} = \phi_{t_i}^{(i)}$. Repeating this argument for all $i \in [n]$ shows $\{\Phi^m_t(x) ~:~ m \in \mathbb N, t \in \mathbb R_{\geq 0}^{mn}\}$
%	corresponds to the set $S$ in \cite[Def.~6.1]{Baxendale} in our context.}
%%		\aa{Notice that, while $t_m$ might not be in $\supp \rho_h$ for the chosen value of $h$, for every $m$ there exists $m'$, $\tau = (\tau_1, \dots, \tau_{nm}) \in \supp(\rho_h^{\otimes m'n})$ with all $\tau_j = 0$ except  for $j \mod(n) = i$, and with $\sum_j h \tau_j = t_m$ by the assumed support of $\rho$ containing $(0,\epsilon)$ for an $\epsilon >0$, so that $\Phi_{h \tau}^m = \phi_{t_m}^{(i)}$.}
%\end{remark}

\Cref{thrm:main} relies on \Cref{thrm:baxendale} and \Cref{prop:dense,prop:split_feller}, all stated below. We present its proof now however to highlight how these separate results combine to give our main one.

\begin{proof}[Proof of \Cref{thrm:main}]
\om{Since \Cref{cond:bound} implies \eqref{eq:int_cond} (\Cref{lem:bound}), the Lyapunov exponents $\lambda_1(h)\geq\cdots\geq\lambda_d(h)$ exist for every $h>0$ and are almost surely constant. Since the Lie bracket condition holds at $x_*\in\supp(\mu)$, \Cref{prop:split_feller} says there exists $\kappa$ and an open neighborhood $U_0$ of $x_*$ on which $P^\kappa_h$ is strong Feller for all $h>0$. Keeping $x_*\in U_0$ while shrinking $U_0$ if necessary, we may assume its closure is compact. Then by \Cref{prop:dense} there exists $h_*>0$ such that assumption (b) of \Cref{thrm:baxendale} holds for all $h\in(0,h_*)$. The result follows by \Cref{thrm:baxendale}.}
\end{proof}

\Cref{thrm:baxendale} is \cite[Theorem
6.9]{Baxendale} adapted to our setting. Its statement requires some definitions. Recall $P_h$ acts on the space $\mathcal{B}_b(\mathcal{X})$ of bounded, measurable functions $f:\mathcal{X}\to\mathbb{R}$ via
\begin{align}\label{eq:Kernel}
	P_hf(x) & = \mathbb{E}\Big(f\big(\Phi_{h\tau}(x)\big)\Big)
	= \int_{\mathbb{R}^n_+}f\big(\Phi_{ht}(x)\big) \prod_{i=1}^n\rho(t_i)dt_1\cdots dt_n.
\end{align}
$P_h$ is \textit{Feller} if it maps $\mathcal{C}_b(\mathcal{X})$ into $\mathcal{C}_b(\mathcal{X})$ and \textit{strong Feller} if it maps $\mathcal{B}_b(\mathcal{X})$ into $\mathcal{C}_b(\mathcal{X})$, where $\mathcal{C}_b(\mathcal{X})$ is the space of bounded, continuous functions on $\mathcal{X}$. Similarly, $P_h$ is strong Feller on an open $U\subseteq\mathcal{X}$ if \om{$P_hf\vert_U\in\mathcal{C}_b(U)$ for all $f\in\mathcal{B}_b(\mathcal{X})$.} We also need to consider the dynamics induced by $\{\Phi^m_{h\tau}\}$ on the projective bundle $\mathcal{PX}$ of $\mathcal{X}$. Specifically, define the \textit{lifted splitting} on $P\mathcal{X}$ by
\begin{align}\label{eq:lift}
	\widetilde\Phi^m_{h\tau}(x,\eta) &\coloneqq \left(\Phi^m_{h\tau}(x), \frac{D_x\Phi^m_{h\tau}(x)\eta}{\lVert D_x\Phi^m_{h\tau}(x)\eta\rVert}\right)
\end{align}
where, by a slight abuse of notation, $\eta$ denotes both an element of the tangent space $T_x\mathcal{X}$ and its equivalence class\footnote{The \textit{projective space} $P_x\mathcal{X}$ at $x$ is the
  space of all lines in the tangent space $T_x\mathcal{X}$.} in
$P_x\mathcal{X}$ whenever $\eta\neq 0$. The transition
kernel of \eqref{eq:lift} is denoted $\widetilde P_h$.

\begin{theorem}\label{thrm:baxendale}
Fix $h>0$. Assume \eqref{eq:int_cond} and $\mathbb{E}D_{KL}(\mu_m\parallel\mu)<\infty$ for all $m$. Suppose there exists $\kappa\in\mathbb{N}$ and an open $U_0\subseteq\mathcal{X}$ with $U_0\cap\supp(\mu)\neq\emptyset$ such that
\begin{enumerate}[label={(\alph*)}]
\item $P^\kappa_h$ is strong Feller on $U_0$ and

\item $\{ \widetilde P^\kappa_hg\vert_{\mathcal{P}_x\mathcal{X}} : g\in\mathcal{C}_b(\mathcal{PX})\}$ is dense in $\mathcal{C}(\mathcal{P}_x\mathcal{X})$ for all $x\in U_0$.
\end{enumerate}
Then if $d\lambda_1(h)=\lambda_\Sigma(h)$ there exists an open $U\subseteq\mathcal{X}$ such that $\mu(U)=1$, \om{$\Phi^m_t(U)\subseteq U$ for all $m$ and $t \in \mathbb{R}^{mn}_{\geq 0}$}, and Alternative \ref{alt:conformal} or \ref{alt:subspaces} holds.
\end{theorem}

Theorem \ref{thrm:baxendale} derives largely from two relationships, one between Lyapunov exponents and relative entropy, and one between $\Phi^m_{h\tau}$ and $\widetilde\Phi^m_{h\tau}$. The assumption
$\mathbb{E}D_{KL}(\mu_m\parallel\mu)<\infty$ for all $m$ allows
for the comparison of $\lambda_1(h)$ and $\lambda_\Sigma(h)$ that lies at
the heart of Theorem \ref{thrm:baxendale}. In particular,
\cite[Corollary 5.6]{Baxendale} says that if \eqref{eq:int_cond} and the finite average relative entropy condition
hold, then $d\lambda_1(h)=\lambda_\Sigma(h)$ implies there exists a $\nu$ in
the space $\mathscr{P}_\mu(P\mathcal{X})$ of probability measures on
$P\mathcal{X}$ with $\mathcal{X}$-marginal $\mu$ whose \textit{regular conditional probability distributions\footnote{The $\nu_x$ are probability measures on $P\mathcal{X}$ which are well-defined for $\mu$-almost every $x$ and satisfy $\nu_x(P_x\mathcal{X})=1$.} (rcpd)} $\{\nu_x:x\in\mathcal{X}\}$ satisfy
\begin{align}\label{eq:icd}
	\mu\left\{x : \big(\widetilde{\Phi}^m_{h\tau}\big)_\#\nu_x = \nu_{\Phi^m_{h\tau}(x)}\ \text{for every}\ m\right\} &= 1\ \text{for almost every $\tau$}.
\end{align}

\begin{proof}[Proof of \Cref{thrm:baxendale}]
\om{There are only two slight differences between \Cref{thrm:baxendale} and \cite[Theorem 6.9]{Baxendale}. The first is our addition of $\kappa$ which in the latter theorem is $1$, i.e. conditions (a) and (b) are assumed to hold for $P_h$ rather than $P^\kappa_h$. Note however that the conclusions of \Cref{thrm:baxendale} -- in particular \Cref{alt:conformal,alt:subspaces} -- do not involve $\kappa$. This is because the only contribution of conditions (a) and (b) to \cite[Theorem 6.9]{Baxendale} is in proving \cite[Proposition 6.3]{Baxendale}, and their only role in that proof is to construct a continuous (on $U_0$) version of every $\nu\in\mathscr{P}_\mu(\mathcal{PX})$ satisfying \eqref{eq:icd}. Specifically, in the proof of \cite[Proposition 6.3]{Baxendale} just after Equation (6.2), we can replace the statement ``\textit{If $T=\mathbb{N}$ take $t=1$ and use the denseness part  of assumption (ii)}" to ``\textit{If $T=\mathbb{N}$ take $t=\kappa$ and use the denseness part  of assumption (ii)}" to obtain the desired continuous version of $\nu_x$ without loss of generality. Since the invariant measure $\mu$ and \eqref{eq:icd} do not depend on $\kappa$ (for the latter because \eqref{eq:icd} holds for all $m$), the measures $\nu\in\mathscr{P}_\mu(\mathcal{PX})$, their rcpds, and continuity of $x\mapsto \nu_x$ do not depend on $\kappa$ as well. Thus the rest of the proof of \cite[Theorem 6.9]{Baxendale} goes through unchanged and is independent of $\kappa$.

The second slight difference is that the conclusions of \Cref{thrm:baxendale} (and hence of \Cref{thrm:main}) -- including \Cref{alt:conformal,alt:subspaces} -- are stated in terms of $\Phi^m_t$ for all $m$ and $t\in\mathbb{R}^{mn}_{\geq 0}$. Meanwhile those of \cite[Theorem 6.9]{Baxendale} are stated to hold for $\Phi^m_t\in S$ with $S$ defined as follows (see also \cite[Def.~6.1]{Baxendale}). For $h>0$ and $m\in\mathbb{N}$, $\Phi^m_h:\tau\mapsto \Phi^m_{h\tau}$ is a random map from $\supp(\rho_h^{\otimes mn})$ into the space $\mathcal{C}^{\infty}(\mathcal{X},\mathcal{X})$ of smooth diffeomorphisms of $\mathcal{X}$. Define $S_m$ to be the support of the distribution of $\Phi^m_\tau$ in $\mathcal{C}^{\infty}(\mathcal{X},\mathcal{X})$, namely $S_m=\{\Phi^m_t:t\in\supp(\rho_h^{\otimes mn})\}$. Then $S$ is defined to be the closure of $\cup_{m=1}^\infty S_m$ in $\mathcal{C}^{\infty}(\mathcal{X},\mathcal{X})$. We now show $S=\{\Phi^m_t:m\in\mathbb{N}, t\in\mathbb{R}^{mn}_{\geq 0}\}$ for every $h>0$, proving the conclusions of \Cref{thrm:baxendale} are in fact equivalent to those in \cite[Theorem 6.9]{Baxendale}. Fix $h>0$. Clearly $S\subseteq\{\Phi^m_t:m\in\mathbb{N}, t\in\mathbb{R}^{mn}_{\geq 0}\}$. For the reverse inclusion, fix $m$ and $t\in\mathbb{R}^{mn}_{\geq 0}$ and consider $\Phi^m_t=\varphi^{(n)}_{t_{mn}}\circ\cdots\varphi^{(1)}_{t_1}$. Since $\supp(\rho)$ contains $(0,\epsilon)$ for some $\epsilon>0$, for each $i \in [mn]$ and component $t_i$ of $t$ there exists $\alpha_i$, $s^{(i)} = (s^{(i)}_1, \dots, s^{(i)}_{\alpha_in}) \in \supp(\rho_h^{\otimes \alpha_in})$ with all $s^{(i)}_j = 0$
	except  for $j\equiv i \mod n$, and with $\sum_j h s^{(i)}_j = t_i$. Therefore $\phi_{t_i}^{(i)}=\Phi_{hs^{(i)}}^{\alpha_i}\in S_{\alpha_i}$ and hence $\Phi^m_t=\Phi_{hs^{(mn)}}^{\alpha_{mn}}\circ\cdots\circ \Phi_{hs^{(1)}}^{\alpha_{1}}\in S$.}
\end{proof}

%\om{It is an important point that while (a) and (b) in \Cref{thrm:baxendale} depend on $\kappa$, the conclusions of \Cref{thrm:baxendale} do not. Indeed, the condition $d\lambda_1(h)=\lambda_\Sigma(h)$ is independent of $\kappa$ by \Cref{rmk:relation}. Furthermore, careful examination of \cite[Section 6]{Baxendale}, especially Lemma 6.2, Proposition 6.3, Definition 6.4, and Proposition 6.7, reveals (a) and (b) are needed only to ensure every $\nu\in\mathscr{P}_\mu(\mathcal{PX})$ satisfying \eqref{eq:icd} has a version such that $x\mapsto \nu_x$ is continuous on $U_0$. However, the ergodic invariant measure $\mu$ does not depend on $\kappa$ and hence neither do the measures $\nu\in\mathscr{P}_\mu(\mathcal{PX})$ and their rcpds.}

\om{
Assumption (a) of \Cref{thrm:baxendale} is addressed in \Cref{sec:feller}. The remainder of this section is dedicated to proving \Cref{prop:dense} which says that \Cref{cond:bound} implies assumption (b) in \Cref{thrm:baxendale}.

\begin{lemma}\label{lem:compact}
Let $K$ be compact and $f:K\times \mathbb{R}_+\to \mathbb{R}$ be continuous. If $f(x,0)=c$ for all $x\in K$ then for every $\epsilon>0$ there exists $h_*>0$ such that $f(x,h)\in (c-\epsilon,c+\epsilon)$ for all $x\in K$ and $h\in [0,h_*)$.
\end{lemma}

\begin{proof}
The proof is by contradiction. If the result is false then there exists $\epsilon>0$ such that for all $n$ there exists $x_n\in K$ and $h_n\in(0,1/n)$ with $f(x_n,h_n)\notin (c-\epsilon,c+\epsilon)$. Since $K$ is compact we may assume $x_n$ converges to $x_*\in K$. By continuity and since $f(x,0)=c$ for all $x$,
\begin{align*}
	c &= f(x_*,0)
		= \lim_{n\to\infty} f(x_n,h_n)
		\notin (c-\epsilon,c+\epsilon). \tag*{\qed}
\end{align*}
\renewcommand{\qedsymbol}{}
\end{proof}

\begin{proposition}\label{prop:dense}
If \Cref{cond:bound} holds, then for every $\kappa\in\mathbb{N}$ and open subset $U$ of $\mathcal{X}$ with compact closure $\overline{U}$ and $U\cap\supp(\mu)\neq\emptyset$ there exists $h_*>0$ such that
\begin{align*}
	\left\{ \widetilde P^\kappa_hg\vert_{\mathcal{P}_x\mathcal{X}} : g\in\mathcal{C}_b(\mathcal{PX})\right\}
\end{align*}
is dense in $\mathcal{C}(\mathcal{P}_x\mathcal{X})$ for all $h\in(0,h_*)$ and $x\in U_0$.
\end{proposition}

\begin{proof}
Points in $\mathcal{P}_x\mathcal{X}$ are represented by points in $S^{d-1}$. Define the off-diagonal $\mathcal{D}\coloneqq\{(\eta,\eta')\in S^{d-1}\times S^{d-1} : \eta\neq\eta'\}$ and $\mathcal{G}\coloneqq\{g_{\eta,\eta'} : (\eta,\eta')\in\mathcal{D}\}$ with $g_{\eta,\eta'}:\mathcal{PX}\to\mathbb{R}$ given by
\begin{align*}
	g_{\eta,\eta'}(x,\xi) &= \lVert\eta-\eta'\rVert^{-2}\langle\eta-\eta', \xi\rangle.
\end{align*}
Note $\mathcal{G}\subseteq \mathcal{C}_b(\mathcal{PX})$. Fix $\kappa\in\mathbb{N}$ and define $f:\overline U\times\mathcal{D}\times\mathbb{R}_+\to\mathbb{R}$ by
\begin{align}\label{eq:f}
	f(x,\eta,\eta',h) &= \left\lvert\widetilde P^\kappa_hg_{\eta,\eta'}(x,\eta) - \widetilde P^\kappa_hg_{\eta,\eta'}(x,\eta')\right\rvert.
\end{align}
We will show there exists $h_*>0$ such that
\begin{align}\label{eq:goal}
	\inf\left\{f(x,\eta,\eta',h) : x\in \overline U, (\eta,\eta')\in\mathcal{D}, h\in[0,h_*)\right\} &\geq \tfrac{1}{2}.
\end{align}
\Cref{prop:dense} then immediately follows by the Stone-Weierstrass theorem. Define $A_{h,x}\coloneqq D_x\Phi^\kappa_{h\tau}(x)$ and $A_{h,x,\eta}\coloneqq A_{h,x}/\lVert A_{h,x}\eta\rVert$. By \Cref{lem:bound} there exists $h_1>0$ and $C<\infty$ such that
\begin{align}\label{eq:h1}
	\sup\left\{\mathbb{E}\lVert A_{h,x}^{-1}\rVert^{m_1}\lVert A_{h,x}\rVert^{m_2}\lVert \partial_h A_{h,x}\rVert^{m_3} : x\in\mathcal{X}, h\in [0,h_1)\right\} &\leq C
\end{align}
when $(m_1,m_2,m_3)=(1,1,0)$ and $(m_1,m_2,m_3)=(2,1,1)$. Adding and subtracting $\mathbb{E}A_{h,x,\eta}\eta'$,
\begin{equation}\label{eq:diff}
\begin{aligned}
	\widetilde P^\kappa_hg_{\eta,\eta'}(x,\eta) - \widetilde P^\kappa_hg_{\eta,\eta'}(x,\eta') &= \lVert\eta-\eta'\rVert^{-2}\langle\eta-\eta', \mathbb{E}A_{h,x,\eta}\eta-\mathbb{E}A_{h,x,\eta'}\eta'\rangle \\
		&= \lVert\eta-\eta'\rVert^{-2}\langle\eta-\eta', \mathbb{E}A_{h,x,\eta}(\eta-\eta')\rangle \\
		&\qquad + \lVert\eta-\eta'\rVert^{-2}\langle\eta-\eta',(\mathbb{E}A_{h,x,\eta}-\mathbb{E}A_{h,x,\eta'})\eta'\rangle \\
		&= T_1(x,\eta,\eta',h) + T_2(x,\eta,\eta',h)
\end{aligned}
\end{equation}
where $T_1$ and $T_2$ denote the first and second terms in the third expression, respectively. Since $\mathbb{E}\lvert\langle\xi, A_{h,x,\eta}\xi\rangle\rvert \leq \mathbb{E}\lVert A_{h,x}^{-1}\rVert\lVert A_{h,x}\rVert< \infty$ by \eqref{eq:h1}, the dominated convergence theorem implies $f_1:\overline U\times S^{d-1}\times S^{d-1}\times[0,h_1)\to\mathbb{R}$ by $f_1(x,\eta,\xi,h)=\mathbb{E}\langle\xi,A_{h,x,\eta}\xi\rangle$ is continuous. Also $f_1(x,\eta,\xi,0)=1$ for all $(x,\eta,\xi)\in\overline U\times S^{d-1}\times S^{d-1}$. Shrinking $h_1$ if necessary, \Cref{lem:compact} implies
\begin{align}\label{eq:T1}
	\inf\left\{T_1(x,\eta,\eta',h) : x\in \overline U, (\eta,\eta')\in\mathcal{D}, h\in[0,h_1)\right\} \geq \tfrac{3}{4}.
\end{align}
For $T_2$ we have
\begin{align*}
	A_{h,x,\eta}-A_{h,x,\eta'} &= \frac{(\lVert A_{h,x}\eta'\rVert-\lVert A_{h,x}\eta\rVert)A_{h,x}}{\lVert A_{h,x,\eta}\rVert\lVert A_{h,x,\eta'}\rVert}.
\end{align*}
So for any $h\in(0,h_1)$,
\begin{align*}
	\lvert T_2\rvert &\leq \lVert\eta-\eta'\rVert^{-2}\mathbb{E}\frac{\left\lvert\lVert A_{h,x}\eta'\rVert-\lVert A_{h,x}\eta\rVert\right\rvert}{\lVert A_{h,x,\eta}\rVert\lVert A_{h,x,\eta'}\rVert}\left\lvert\langle\eta-\eta',A_{h,x}\eta'\rangle\right\rvert \\
		&\leq \lVert\eta-\eta'\rVert^{-1}\mathbb{E}\lVert A_{h,x}^{-1}\rVert^2\lVert A_{x,h}\rVert\lvert\langle\eta-\eta',A_{h,x}\eta'\rangle\rvert \\
		&\leq \left\lvert\left\langle\frac{\eta-\eta'}{\lVert\eta-\eta'\rVert},\eta'\right\rangle\right\rvert\mathbb{E}\lVert A_{h,x}^{-1}\rVert^2\lVert A_{x,h}\rVert
		+ h\mathbb{E}\lVert A_{h,x}^{-1}\rVert^2\lVert A_{x,h}\rVert\lVert\partial_hA_{h,x}\mid_{h=\hat h}\rVert \\
		&\leq \left\lvert\left\langle\frac{\eta-\eta'}{\lVert\eta-\eta'\rVert},\eta'\right\rangle\right\rvert\mathbb{E}\lVert A_{h,x}^{-1}\rVert^2\lVert A_{x,h}\rVert+ Ch.
\end{align*}
The second inequality follows by the reverse triangle inequality and $\lVert A_{h,x,\eta}\rVert^{-1}\lVert A_{h,x,\eta'}\rVert^{-1}\leq \lVert A_{x,h}^{-1}\rVert^2$. The third is by Taylor's theorem and $A_{0,x}=I$ which give $\langle\eta-\eta',A_{h,x}\eta'\rangle = \langle\eta-\eta',\eta'\rangle + h\langle\eta-\eta',\partial_hA_{h,x}\mid_{h=\hat h}\eta'\rangle$ for some $\hat h\in(0,h)$, and the last inequality follows from \eqref{eq:h1}. Since $\lim_{\lVert\eta-\eta'\rVert\to 0} \lVert\eta-\eta'\rVert^{-1}\langle\eta-\eta',\eta'\rangle=0$ for all $\eta,\eta'\in S^{d-1}$ there exists $h_2>0$ and $\epsilon>0$ such that
\begin{align}\label{eq:T2}
	\sup\left\{T_2(x,\eta,\eta',h) : x\in \overline U, (\eta,\eta')\in\mathcal{D}_\epsilon, h\in[0,h_2)\right\} &\leq \tfrac{1}{4}
\end{align}
where $\mathcal{D}_\epsilon=\{(\eta,\eta')\in\mathcal{D} : \lVert\eta-\eta'\rVert<\epsilon\}$. Setting $h_3=\min\{h_1,h_2\}$,
\begin{align*}
	\inf_{x\in \overline U,(\eta,\eta')\in\mathcal{D}_\epsilon,h\in[0,h_3)} f(x,\eta,\eta',h) &= \inf_{x\in \overline U,(\eta,\eta')\in\mathcal{D}_\epsilon,h\in[0,h_3)} \lvert T_1+T_2\rvert \\
		&\geq \inf_{x\in \overline U,(\eta,\eta')\in\mathcal{D},h\in[0,h_3)}T_1 - \sup_{x\in \overline U,(\eta,\eta')\in\mathcal{D}_\epsilon,h\in[0,h_3)} \lvert T_2\rvert \\
		&\geq \tfrac{1}{2}.
\end{align*}
The first equality is by \eqref{eq:f} and \eqref{eq:diff}. The first inequality follows from $\lvert T_1+T_2\rvert\geq T_1-\lvert T_2\rvert$ and the last from \eqref{eq:T1} and \eqref{eq:T2}. This proves \eqref{eq:goal} with $\mathcal{D}$ replaced by $\mathcal{D}_\epsilon$. To extend to $\mathcal{D}$, observe $\mathcal{D}_\epsilon^c$ is a closed subset of the compact space $S^{d-1}\times S^{d-1}$ and therefore compact. Since $f(x,\eta,\eta',0)=1$ for all $\eta\neq\eta'$, \Cref{lem:compact} implies there exists $h_4>0$ such that
\begin{align*}
	\inf\left\{f(x,\eta,\eta',h) : x\in \overline U, (\eta,\eta')\in\mathcal{D}_\epsilon^c, h\in[0,h_4)\right\} &\geq \tfrac{1}{2}.
\end{align*}
Setting $h_*=\min\{h_3,h_4\}$ yields \eqref{eq:goal} and concludes the proof.
\end{proof}
}

%%%%%%%%%%%%%%%%%%%%%%%%%%%%%%%%%%%%%%%%%%%%%

\section{Transition densities and strong Feller}\label{sec:feller}

%%%%%%%%%%%%%%%%%%%%%%%%%%%%%%%%%%%%%%%%%%%%%

We begin this section by considering a more general setting than
random splitting. \Cref{cor:density} and \Cref{thrm:feller} are stated
at this heightened level of generality. We then apply these results to
random splitting to prove that if the Lie bracket condition holds at a
point, then there is a $\kappa\in\mathbb{N}$ and a neighborhood $U_0$ of that point such that the transition kernel $P_h^\kappa$ has a density and
is strong Feller on $U_0$ (\Cref{prop:split_feller}). Only the
strong Feller part of this result is needed in our study of Lyapunov
exponents; the existence of transition densities is included because
it is a direct consequence of the coarea formula and may be of
independent interest. To avoid notational confusion, $\mathcal{X}$
will always denote a $\mathcal V$-orbit of random splitting.

%%%%%%%%%%%%%%%%%%%%%%%%%%%%%%%%%%%%%%%%%%%%%

\subsection{Transition densities in a general setting}\label{sec:general}

%%%%%%%%%%%%%%%%%%%%%%%%%%%%%%%%%%%%%%%%%%%%%

Let $\mathcal{Y}$ be a smooth $d$-dimensional manifold with volume form $\vol_\mathcal{Y}$, let $\Omega$ be
a connected, open subset of $\mathbb{R}^n$ with $n\geq d$, and let
$\rho$ be a probability measure on $\Omega$ that is absolutely
continuous with respect to Lebesgue measure. Any continuous
$\Psi:\mathcal{Y}\times\Omega\to\mathcal{Y}$ induces an operator $P:\mathcal{B}_b(\mathcal{Y})\to\mathcal{B}_b(\mathcal{Y})$ given by
\begin{align}\label{eq:kernel}
	Pf(y) &\coloneqq \mathbb{E}\big(f(\Psi(y,\omega))\big)
		\coloneqq \int_\Omega (f\circ\Psi)(y,\omega)\rho(\omega)d\omega
\end{align}
where, as before, $\rho$ denotes both the measure and its density. $P$ has a \textit{density} with respect to $\vol_\mathcal{Y}$ if there exists an integrable function $p:\mathcal{Y}\times\mathcal{Y}\to \mathbb{R}_{\geq 0}$ satisfying
\begin{align*}
	Pf(y) &= \int_\mathcal{Y} f(\hat y)p(y,\hat y)\vol_{\mathcal{Y}}(d\hat y).
\end{align*}
The existence of a density for $P$ is a direct corollary of the coarea formula when $\Psi$ is a submersion.

\begin{lemma}[Coarea formula]\label{lem:Coarea}
Let $\mathcal{Y}$ and $\mathcal{Z}$ be smooth $d$ and $n$-dimensional manifolds with volume forms $\vol_\mathcal{Y}$ and $\vol_\mathcal{Z}$, respectively. Suppose $F:\mathcal{Z}\to\mathcal{Y}$ is a $C^1$ submersion, i.e. $DF(z):T_z\mathcal{Z}\to T_{F(z)}\mathcal{Y}$ is surjective for every $z$ in $\mathcal{Z}$. Then for any $f:\mathcal{Z}\to\mathbb{R}$ measurable with respect to $\vol_\mathcal{Z}$,
\begin{align}\label{eq:Coarea}
	\int_\mathcal{Z}	 f(z) \vol_\mathcal{Z}(dz) &= \int_\mathcal{Y}\bigg(\int_{F^{-1}(y)} \frac{f(z)}{\sqrt{\det DF(z)DF(z)^*}} \mathcal{H}^{n-d}(dz)\bigg) \vol_\mathcal{Y}(dy)
\end{align}
as long as either side is finite. Here $\mathcal{H}^{n-d}(dz)$ is $n-d$-dimensional Hausdorff measure on $\mathcal{Z}$.
\end{lemma}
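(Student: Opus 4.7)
The coarea formula \eqref{eq:Coarea} is a classical result from geometric measure theory, and I would structure the proof in the familiar way: localize, reduce $F$ to a normal form, and then identify the Jacobian $\sqrt{\det DF\,DF^*}$. First, by subordinating $f$ to a smooth partition of unity on $\mathcal{Z}$, it suffices to treat $f$ supported in a single coordinate chart. Since $F$ is a $C^1$ submersion at every $z$, shrinking the chart if necessary, the rank theorem supplies coordinates $(z^1,\ldots,z^p)$ on $\mathcal{Z}$ and $(y^1,\ldots,y^d)$ on $\mathcal{Y}$ in which $F$ takes the model form $(z^1,\ldots,z^p)\mapsto(z^1,\ldots,z^d)$. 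In these coordinates the fibers of $F$ are parallel affine subspaces and Fubini's theorem applied to Lebesgue measure on $\mathbb{R}^p$ reorganizes the left side of \eqref{eq:Coarea} as an iterated integral over the base followed by an integral over the fiber.

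The main step, and the one I would execute most carefully, is converting the coordinate Lebesgue measures back to $\vol_\mathcal{Z}$, $\vol_\mathcal{Y}$, and $\mathcal{H}^{p-d}$, and checking that the composite Jacobian collapses to $1/\sqrt{\det DF(z)DF(z)^*}$. Three density factors arise in doing so: the Gram determinants of the coordinate parametrizations of $\mathcal{Z}$ and $\mathcal{Y}$, and the Gram determinant of the induced parametrization of the fiber $F^{-1}(y)$ by $(z^{d+1},\ldots,z^p)$. The algebraic identity behind their combination is the standard singular value decomposition fact that for a surjective linear map $A : V \to W$ between finite-dimensional inner product spaces, the restriction $A|_{(\ker A)^\perp}$ carries the induced volume element on $(\ker A)^\perp$ to $\sqrt{\det AA^*}$ times the volume element on $W$. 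Applying this pointwise to $DF(z)$ and combining with the chain rule expressing $DF$ in the chosen coordinates yields the claimed factor.

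I expect this Jacobian bookkeeping to be the only real obstacle; it is where the specific form $\sqrt{\det DF\,DF^*}$ must be pinned down, and care is needed to verify that the three Gram factors combine in exactly the right way independent of the chart choice, which ultimately follows from the fact that $\det DF\,DF^*$ is a coordinate-invariant quantity once one fixes the Riemannian structures on $\mathcal{Z}$ and $\mathcal{Y}$. Once \eqref{eq:Coarea} is established locally for continuous compactly supported $f$, extension to general $\vol_\mathcal{Z}$-measurable $f$ with either side finite follows routinely from monotone convergence for nonnegative $f$ and splitting $f = f^+ - f^-$ in general.
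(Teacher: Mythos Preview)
Your outline is a correct and standard route to the coarea formula: localize via a partition of unity, straighten $F$ with the rank theorem, apply Fubini in the model coordinates, and then track the three Gram determinants back to the intrinsic volume forms, using the linear-algebra fact about $\sqrt{\det AA^*}$ for surjective $A$. Nothing in the sketch is wrong, and the step you flag as the delicate one (the Jacobian bookkeeping) is indeed where the work lies.

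That said, the paper does not prove this lemma at all: it simply cites it as a known result, pointing to \cite[Corollary 2.2]{Nicolaescu} and \cite[Theorem 3.2.11]{Federer69}. So there is no proof in the paper to compare against; you have supplied a genuine argument where the authors chose to defer to the literature. Your approach is essentially what one finds in those references (the Nicolaescu treatment in particular follows the same localize--straighten--Fubini pattern), so in that sense your route is the expected one.
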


\begin{proof}
See \cite[Corollary 2.2]{Nicolaescu} or \cite[Theorem 3.2.11]{Federer69}.
\end{proof}

\begin{cor}\label{cor:density}
Suppose $\Psi\in C^1(\mathcal{Y}\times\Omega, \mathcal{Y})$. If for every $y\in \mathcal{Y}$,
\begin{align}\label{eq:surjective_as}
	D_\omega\Psi(y,\omega) : T_\omega\Omega \to T_{\Psi(y,\omega)}\mathcal{Y}
\end{align}
is surjective for Lebesgue-almost every $\omega$, then $P$ has density $p$ with respect to $\vol_{\mathcal{Y}}$ given by
\begin{align}\label{eq:density}
	p(y,\hat y) &= \int_{\Psi_y^{-1}(\hat y)}
		\frac{\rho(\omega)}{\sqrt{\det D_\omega\Psi(y,\omega)D_\omega\Psi(y,\omega)^*}} \mathcal{H}^{n-d}(d\omega)
\end{align}
where $\Psi_y^{-1}(\hat y)\coloneqq\{\omega:\Psi(\omega,y)=\hat y\}$.
\end{cor}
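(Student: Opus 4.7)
The plan is to apply the coarea formula (\Cref{lem:Coarea}) to the map $\Psi_u : \Omega \to \mathcal{Y}$ defined for each fixed $u \in U$ by $\Psi_u(\omega) \coloneqq \Psi(u,\omega)$. The only obstacle is that \Cref{lem:Coarea} requires a submersion \emph{everywhere}, whereas our hypothesis only guarantees that $D_\omega\Psi(u,\omega)$ is surjective on a set of full Lebesgue measure in $\Omega$. To handle this I will restrict to the open subset on which surjectivity holds and use absolute continuity of $\rho$ to discard the complementary null set.

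More precisely, fix $u \in U$ and let
\begin{equation*}
  S_u \coloneqq \{\omega \in \Omega : D_\omega\Psi(u,\omega) \text{ is surjective}\}.
\end{equation*}
Surjectivity of $D_\omega\Psi(u,\omega)$ is equivalent to the non-vanishing of the Gram determinant $\det\bigl(D_\omega\Psi(u,\omega) D_\omega\Psi(u,\omega)^*\bigr)$, a continuous function of $\omega$ by $C^1$-regularity of $\Psi$. Hence $S_u$ is open in $\Omega$, and by hypothesis its complement has Lebesgue measure zero. Since $\rho$ is absolutely continuous with respect to Lebesgue measure, for any $f \in \mathcal{B}_b(\mathcal{Y})$ the defining expression \eqref{eq:kernel} reduces to
\begin{equation*}
  Pf(u) = \int_{S_u} f(\Psi_u(\omega))\, \rho(\omega)\, d\omega.
\end{equation*}

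The restriction $\Psi_u|_{S_u} : S_u \to \mathcal{Y}$ is a $C^1$ submersion everywhere on $S_u$, so \Cref{lem:Coarea} applied to $F = \Psi_u|_{S_u}$ and to the integrand $g(\omega) \coloneqq f(\Psi_u(\omega))\rho(\omega)$ yields
\begin{equation*}
  Pf(u) = \int_\mathcal{Y} f(y)\left( \int_{\Psi_u^{-1}(y)\cap S_u} \frac{\rho(\omega)}{\sqrt{\det D_\omega\Psi(u,\omega)D_\omega\Psi(u,\omega)^*}}\, \mathcal{H}^{p-d}(d\omega) \right) \vol_\mathcal{Y}(dy).
\end{equation*}
Since the integrand of the inner integral is (by convention) set to zero wherever $D_\omega\Psi(u,\omega)$ fails to be surjective, the inner integral over $\Psi_u^{-1}(y)\cap S_u$ agrees with the one over $\Psi_u^{-1}(y)$ as written in \eqref{eq:density}. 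Reading off the kernel gives the claimed density formula.

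The only genuinely delicate point is justifying the passage from $\Psi_u^{-1}(y)\cap S_u$ to $\Psi_u^{-1}(y)$ in the final step: one needs to fix the convention that the integrand vanishes on the non-submersion locus. With that convention in place (consistent with the appearance of the singular factor in the denominator), the remainder of the argument is a direct invocation of coarea, and no further work is required.
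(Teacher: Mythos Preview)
Your argument is correct and follows essentially the same route as the paper's proof: write $Pf(u)$ as an integral over $\Omega$, invoke the coarea formula for $\Psi_u$, pull the factor $f(y)$ outside the inner integral, and read off the density. The only difference is cosmetic: you restrict to the open set $S_u$ of submersion points \emph{before} applying coarea, whereas the paper applies coarea directly and then remarks that the set of critical values has measure zero in $\mathcal{Y}$, so $p(u,y)$ can be set to zero there without affecting the integral.
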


The assumption that $D_\omega\Psi(y,\omega)$ is surjective for almost every $\omega$ is equivalent to
\begin{align}\label{eq:malliavin}
	M(y,\omega) &\coloneqq D_\omega\Psi(y,\omega)D_\omega\Psi(y,\omega)^*
\end{align}
being almost-surely invertible. Since $\mathcal{Y}$ is $d$-dimensional, $M$ is a symmetric, positive-semidefinite $d\times d$ matrix. Note $n\geq d$ is a necessary condition for its invertibility, which is why we made this assumption above. $M$ is analogous to the controllability Gramian matrix in control theory where $\omega$ is a control, and the Malliavin matrix in Malliavin calculus where randomness is Brownian motion. $\sqrt{\det M(y,\omega)}$ is an expression of the tangential Jacobian, so in this setting the coarea formula is just a generalization of the classical change of variable formula \cite{Morgan16, Maggi_2012}.

\begin{remark}
The assumption in \cref{cor:density} that for fixed $y$ the map $\Psi(y,\cdot):\Omega\to\mathcal{Y}$ from noise space to state space is a submersion or, equivalently, that $M$ is invertible, is a form of hypoellipticity. It allows
         some regularity of the noise distribution $\rho$ to be
         transferred to the state space.
\end{remark}

\begin{proof}[Proof of \Cref{cor:density}]
For $f\in\mathcal{B}_b(\mathcal{Y})$ we have
	\begin{align*}
		Pf(y) &= \int_\Omega f(\Psi(y,\omega))\rho(\omega)d\omega
					= \int_\mathcal{Y}\bigg(\int_{\Psi^{-1}_y(\hat y)} \frac{f(\Psi(y,\omega))\rho(\omega)}{\sqrt{\det M(y,\omega)}} \mathcal{H}^{n-d}(d\omega)\bigg) \vol_\mathcal{Y}(d\hat y) \\
					&= \int_\mathcal{Y}f(\hat y)\bigg(\int_{\Psi^{-1}_y(\hat y)} \frac{\rho(\omega)}{\sqrt{\det M(y,\omega)}} \mathcal{H}^{n-d}(d\omega)\bigg) \vol_\mathcal{Y}(d\hat y)
					= \int_\mathcal{Y} f(\hat y)p(y,\hat y)\vol_\mathcal{Y}(d\hat y).
	\end{align*}
where the second equality is the coarea formula, the third holds because $f(\Psi(y,\omega))=f(\hat y)$ on $\Psi^{-1}_y(\hat y)$, and the fourth is by definition of $p(y,\hat y)$. One caveat in our application of the coarea formula is that our assumption says $\det M(y,\omega)$ is nonzero only almost-surely. This is not an issue however since $\{\hat y:\Psi(y,\omega)=\hat y \text{ and } \det M(y,\omega)=0\}$ has measure 0 in $\mathcal{Y}$. Therefore we can define $p(y,\hat y)=0$ for $\hat y$ in this set without changing the value of the integral of $f$ against $p$.
\end{proof}

%%%%%%%%%%%%%%%%%%%%%%%%%%%%%%%%%%%%%%%%%%%%%

\subsection{Strong Feller in a general setting}\label{sec:SFgeneral}

%%%%%%%%%%%%%%%%%%%%%%%%%%%%%%%%%%%%%%%%%%%%%

Let $\mathcal{Y}$, $\Omega$, $\rho$, and $\Psi$ be as in \Cref{sec:general}. Recall from \Cref{sec:main} the transition kernel $P$ defined in \eqref{eq:kernel} is strong Feller on an open $U_0\subseteq\mathcal{Y}$ if $Pf\vert_{U_0}\in\mathcal{C}_b(U_0)$ whenever $f\in\mathcal{B}_b(\mathcal{Y})$. Being strong Feller implies $P$ has a regularizing effect.

\begin{proposition}\label{thrm:feller}
Suppose $U_0\subseteq\mathcal{Y}$ is open and $\Psi\in C^1(\mathcal{Y}\times\Omega, \mathcal{Y})$. If $M(y,\omega)$ is invertible for every $y \in U_0$ and Lebesgue-almost every $\omega\in\Omega$, then $P$ is strong Feller on $U_0$.
\end{proposition}

\noindent The next lemma is used in the proof of \Cref{thrm:feller}.

\begin{lemma}\label{lem:LowerBound}
Suppose $\Psi\in C^1(\mathcal{Y}\times\Omega, \mathcal{Y})$, $U_0\subseteq\mathcal{Y}$ is open, and $K\subseteq\Omega$ is compact. For $y \in \mathcal{Y}$ define
	\begin{align*}
		A_K(y) & \coloneqq \{\omega\in K : \det M(y,\omega)=0\}.
	\end{align*}
If $M(y,\cdot)$ is invertible for every $y\in U_0$ and almost every $\omega\in\Omega$, then for any $y_* \in U_0$ and $\epsilon>0$ there exists an open $W\subseteq\Omega$ and a $\delta>0$ such that $\rho(W)<\epsilon$, $B_\delta(y_*)\subseteq U_0$, and $A_K(y)\subseteq W$ for all $y \in B_\delta(y_*)$. Moreover, there exists an open neighborhood $W'$ of $K\cap W^c$ such that
\begin{align}\label{eq:LowerBound1}
	\inf\left\{\det M(y,\omega) : y\in B_\delta(y_*), \omega\in W'\right\} & > 0.
\end{align}
\end{lemma}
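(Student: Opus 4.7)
The plan is to exploit the continuity of $\det M$ together with the almost-sure invertibility hypothesis via a tube-lemma-style compactness argument. First, observe that since $\Psi \in C^1(\mathcal{Y}\times\Omega, \mathcal{Y})$, the function $(y,\omega) \mapsto \det M(y,\omega)$ is jointly continuous. By hypothesis, $A_K(y_*) \subset \{\omega : \det M(y_*,\omega)=0\}$ has measure zero. Using outer regularity, I would enclose $A_K(y_*)$ in an open set $U \subset \Omega$ with $\mu(U) < \epsilon$.

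Next I would show that $\det M(y_*,\cdot)$ is uniformly bounded below on $K \cap U^c$. This set is compact as a closed subset of $K$, and by construction $\det M(y_*,\omega) > 0$ for every $\omega \in K \cap U^c$. Continuity and compactness then yield a constant $c > 0$ such that $\det M(y_*,\omega) \geq c$ for all $\omega \in K \cap U^c$.

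The key step is to promote this pointwise (in $y$) bound into a joint bound on a product neighborhood. Consider the open set
\begin{equation*}
V \coloneqq \{(y,\omega) \in \mathcal{Y} \times \Omega : \det M(y,\omega) > c/2\},
\end{equation*}
which contains $\{y_*\} \times (K \cap U^c)$. For each $\omega_0 \in K \cap U^c$, joint continuity gives open neighborhoods $B_{\delta_{\omega_0}}(y_*)$ and $W_{\omega_0} \ni \omega_0$ with $B_{\delta_{\omega_0}}(y_*) \times W_{\omega_0} \subset V$. By compactness of $K \cap U^c$, extract a finite subcover $\{W_{\omega_i}\}_{i=1}^N$, set $W \coloneqq \bigcup_i W_{\omega_i}$ and $\delta \coloneqq \min_i \delta_{\omega_i} > 0$. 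Then $W$ is an open neighborhood of $K \cap U^c$ and $\det M(y,\omega) > c/2$ for all $(y,\omega) \in B_\delta(y_*) \times W$, which is precisely \eqref{eq:LowerBound1}.

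Finally, this last bound forces $A_K(y) \cap W = \emptyset$ for every $y \in B_\delta(y_*)$, and since $K \cap U^c \subset W$, we conclude $A_K(y) \subset K \setminus (K \cap U^c) = K \cap U \subset U$. No step here is truly hard; the main subtlety is simply choosing $W$ to be an \emph{open} set containing all of $K \cap U^c$ uniformly in $y$, which is why a finite subcover (rather than a single tube) is convenient.
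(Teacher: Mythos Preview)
Your proof is correct. Both your argument and the paper's rest on the same ingredients—joint continuity of $\det M$, compactness of $K\cap U^c$, and outer regularity of $\mu$—but they are organized differently. The paper first proves the containment $A_K(y)\subset U$ for $y$ near $y_*$ by a sequential-compactness contradiction (assume $\omega_n\in A_K(y_n)\setminus U$ with $y_n\to y_*$, pass to a limit $\omega_*\in K\cap U^c$, and contradict $A_K(y_*)\subset U$), and then separately constructs the open neighborhood $W$ by topologically separating $K\cap U^c$ from the closed set where $\det M$ is small. You instead run a direct tube-lemma argument: cover $\{y_*\}\times(K\cap U^c)$ by product boxes inside $\{\det M>c/2\}$, extract a finite subcover, and obtain $W$ and $\delta$ in one stroke, with the containment $A_K(y)\subset U$ falling out as a corollary of the lower bound on $W$. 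Your route is slightly more economical in that it handles both conclusions simultaneously; the paper's has the minor expository advantage of isolating the two assertions of the lemma as separate steps.
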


\begin{proof}
Fix $y_* \in U_0$ and $\epsilon>0$. By assumption, $A_K(y)$ has Lebesgue measure zero for all $y$. So since $\rho$ is absolutely continuous there exists an open neighborhood $W$ of $A_K(y_*)$ such that $\rho(W)<\epsilon$. Suppose toward a contradiction there is a sequence $\{y_n\}\subseteq U_0$ converging to $y_*$ with $A_K(y_n)$ not contained in $W$; that is, for each $y_n$ there exists $\omega_n\in K\cap W^c$ satisfying $\det M(y_n,\omega_n)=0$. Then since $K\cap W^c$ is compact there is a subsequence $\{\omega_{n_k}\}$ that converges to some $\omega_*\in K\cap W^c$. Since $(y,\omega)\mapsto\det M(y,\omega)$ is continuous,
\begin{align*}
	0 & = \lim_{k\to\infty} \det M(y_{n_k},\omega_{n_k})
		= \det M(y_*,\omega_*).
\end{align*}
But this implies $\omega_*\in A_K(y_*)\cap(K\cap W^c)=\emptyset$, a contradiction. For the ``moreover'' part of the lemma, it follows from preceding argument that there exists $\delta>0$ such that $B_\delta(y_*)\subseteq U_0$ and
\begin{align*}
	\inf\left\{\det M(y,\omega) : y\in B_\delta(y_*), \omega\in K\cap W^c\right\} \geq 2\alpha
\end{align*}
for some $\alpha>0$. Set $F_y\coloneqq \det M(y,\cdot)$ and let $K'$ be the closure in $\Omega$ of
\begin{align*}
	\bigcup_{y\in B_\delta(y_*)} F_y^{-1}(0,\alpha).
\end{align*}
Straightforward verification shows $K\cap W^c$ and $K'$ are closed and disjoint and are therefore separated by disjoint open sets. By continuity and construction, any such neighborhood $W'$ of $K\cap W^c$ satisfies
\begin{align*}
	\inf\left\{\det M(y,\omega) : y\in B_\delta(y_*), \omega\in W'\right\} & \geq \alpha
		> 0. \qedhere
\end{align*}
\end{proof}

\begin{proof}[Proof of \Cref{thrm:feller}]
Fix $f \in \mathcal{B}_b(\mathcal{Y})$. The case $f\equiv 0$ is immediate, so assume otherwise. Set $C\coloneqq 6\lVert f\rVert_\infty>0$ and fix $y_* \in U_0$ and $\epsilon>0$. We will show there exists $\delta>0$ such that $\lvert Pf(y)-Pf(y_*)\rvert<\epsilon$ for all $y\in B_\delta(y_*)\subseteq U_0$, proving $Pf\vert_{U_0}$ is continuous. Since $\rho$ is a Borel measure it is tight. So there exists a compact $K\subseteq\Omega$ such that $\rho(K^c)<\epsilon/C$. By Lemma \ref{lem:LowerBound} there exists $W\subseteq\Omega$ open and $\delta_1>0$ such that $\rho(W)<\epsilon/C$, $B_{\delta_1}(y_*)\subseteq U_0$, and
\begin{align}\label{eq:LowerBound2}
	\inf\left\{\det M(y,\omega) : y\in B_{\delta_1}(y_*), \omega\in W'\right\} & \eqqcolon \alpha
		> 0
\end{align}
for some open neighborhood $W'$ of $K\cap W^c$. Now since $\1_{K^c}+\1_{K\cap W}+\1_{K\cap W^c}=1$,
	\begin{multline}\label{eq:EpsilonBound1}
		Pf(y)-Pf(y_*)  = \mathbb{E}\Big(\big[f\big(\Psi(y)\big)-f\big(\Psi(y_*)\big)\big]\1_{K^c}\Big) + \mathbb{E}\Big(\big[f\big(\Psi(y)\big)-f\big(\Psi(y_*)\big)\big]\1_{K\cap W}\Big) \\
		                + \mathbb{E}\Big(\big[f\big(\Psi(y)\big)-f\big(\Psi(y_*)\big)\big]\1_{K\cap W^c}\Big)
	\end{multline}
for any $y$ in $\mathcal{Y}$. By our choice of $K$,
\begin{align}\label{eq:EpsilonBound2}
		\Big\lvert \mathbb{E}\Big(\big[f\big(\Psi(y)\big)-f\big(\Psi(y_*)\Big)\big]\1_{K^c}\big) \Big\rvert & \leq 2\lVert f\rVert_\infty\rho(K^c)
		< \frac{\epsilon}{3}.
\end{align}
And by our choice of $W$,
\begin{align}\label{eq:EpsilonBound3}
	\Big\lvert\mathbb{E}\Big(\big[f\big(\Psi(y)\big)-f\big(\Psi(y_*)\big)\big]\1_{K\cap W}\Big) \Big\rvert & \leq 2\lVert f\rVert_\infty\rho(W)
		< \frac{\epsilon}{3}.
\end{align}
To handle the third expectation (the one with $\1_{K\cap W^c}$), set $L^1\coloneqq L^1(\vol_\mathcal{Y})$ and choose a compactly supported, continuous function $g$ on $\mathcal{Y}$ such that $\lVert g-f\rVert_{L^1} < \epsilon/(18\sqrt{\alpha})$. This can always be done since compactly supported, continuous functions are dense in $L^1$ \cite[Proposition 7.9]{Folland}. By adding and subtracting $g\circ\Psi$ appropriately,
\begin{equation}\label{eq:CoareaBound1}
\begin{aligned}
	\mathbb{E}\Big(\big[f\big(\Psi(y)\big)-f\big(\Psi(y_*)\big)\big]\1_{K\cap W^c}\Big) &= \mathbb{E}\Big(\big[f\big(\Psi(y)\big)-g\big(\Psi(y)\big)\big]\1_{K\cap W^c}\Big) \\
		&\qquad + \mathbb{E}\Big(\big[g\big(\Psi(y)\big)-g\big(\Psi(y_*)\big)\big]\1_{K\cap W^c}\Big) \\
		&\qquad\quad + \mathbb{E}\Big(\big[g\big(\Psi(y_*)\big)-f\big(\Psi(y_*)\big)\big]\1_{K\cap W^c}\Big).
\end{aligned}
\end{equation}
Since $g\circ\Psi$ is continuous, there exists $\delta_2>0$ such that for every $y\in B_{\delta_2}(y_*)$,
\begin{align}\label{eq:CoareaBound2}
	\Big\lvert \mathbb{E}\Big(\big[g\big(\Psi(y)\big)-g\big(\Psi(y_*)\big)\big]\1_{K\cap W^c}\Big)\Big\rvert &< \frac{\epsilon}{9}.
\end{align}
Set $\delta\coloneqq\min\{\delta_1,\delta_2\}$ and fix $y\in B_\delta(y_*)$. Setting $S(y,\hat{y})\coloneqq \{\omega\in W':\Psi(y,\omega)=\hat{y}\}$,
{\small
\begin{align*}
		\mathbb{E}\Big(\big[f\big(\Psi(y)\big)-g\big(\Psi(y)\big)\big]\1_{K\cap
  W^c}\Big)
  %&= \int_\Omega \big[f\big(\Psi(y,\omega)\big)-g\big(\Psi(y,\omega)\big)\big]\1_{K\cap W^c}(\omega)\rho(\omega)d\omega \\
			&= \int_{W'} \big[f\big(\Psi(y,\omega)\big)-g\big(\Psi(y,\omega)\big)\big]\1_{K\cap W^c}(\omega)\rho(\omega)d\omega \\
			&= \int_\mathcal{Y}\Big(\int_{S(y,\hat{y})}\big[f\big(\Psi(y,\omega)\big)-g\big(\Psi(y,\omega)\big)\big]\frac{\1_{K\cap W^c}(\omega)\rho(\omega)}{\sqrt{\det M(y,\omega)}}d\omega\Big)\vol_\mathcal{Y}(d\hat{y}) \\
			&= \int_\mathcal{Y}\big(f(\hat{y})-g(\hat{y})\big)\Big(\int_{S(y,\hat{y})}\frac{\1_{K\cap W^c}(\omega)\rho(\omega)}{\sqrt{\det M(y,\omega)}}d\omega\Big)\vol_\mathcal{Y}(d\hat{y}).
\end{align*}
}The first equality holds since $K\cap W^c\subseteq W'$. The second is the coarea formula which applies since $\Psi(y,:):W'\to\mathcal{Y}$ is a submersion for all $y\in B_\delta(y_*)$ by \eqref{eq:LowerBound2}. Moreover,
\begin{align*}
	\sup\left\{\frac{1}{\sqrt{\det M(y,\omega)}} : y\in B_\delta(y_*), \omega\in W'\right\} &\leq \frac{1}{\sqrt{\alpha}}.
\end{align*}
So our choice of $g$ implies that for all $y\in B_\delta(y_*)$,
\begin{align}\label{eq:CoareaBound3}
	\Big\lvert \mathbb{E}\Big(\big[f\big(\Psi(y)\big)-g\big(\Psi(y)\big)\big]\1_{K\cap W^c}\Big)\Big\rvert &\leq \frac{1}{\sqrt{\alpha}}\int_\mathcal{Y} \big\lvert f(y)-g(y)\big\rvert dy
		< \frac{\epsilon}{18}.
	\end{align}
Applying the triangle inequality, \eqref{eq:CoareaBound2}, and \eqref{eq:CoareaBound3} to \eqref{eq:CoareaBound1} gives
\begin{align}\label{eq:EpsilonBound4}
		\Big\lvert \mathbb{E}\Big(\big[f\big(\Psi(y)\big)-f\big(\Psi(y_*)\big)\big]\1_{K\cap W^c}\Big)\Big\rvert & < \frac{\epsilon}{18} + \frac{\epsilon}{9} + \frac{\epsilon}{18}
		= \frac{\epsilon}{3}
\end{align}
for all $y$ in $B_\delta(y_*)$. Finally, applying the triangle inequality, \eqref{eq:EpsilonBound2}, \eqref{eq:EpsilonBound3}, and \eqref{eq:EpsilonBound4} to \eqref{eq:EpsilonBound1} gives
	\begin{align*}
		\big\lvert Pf(y)-Pf(y_*)\big\rvert & < \epsilon
	\end{align*}
	for all $y$ in $B_\delta(y_*)$. So $Pf\vert_{U_0}$ is continuous and hence $P$ is strong Feller on $U_0$.
\end{proof}

%%%%%%%%%%%%%%%%%%%%%%%%%%%%%%%%%%%%%%%%%%%%%

\subsection{Strong Feller and random splitting}\label{sec:feller_splitting}

%%%%%%%%%%%%%%%%%%%%%%%%%%%%%%%%%%%%%%%%%%%%%

We return now to a general random splitting associated to a family of analytic vector fields $\mathcal{V}$. In this setting the above results yield the following.

\begin{proposition}\label{prop:split_feller}
If the Lie bracket condition holds at a point $x_*$ in a $d$-dimensional $\mathcal{V}$-orbit $\mathcal{X}$, then for some $\kappa\in\mathbb{N}$ and open neighborhood $U_0$ of $x_*$ the map $t\mapsto D_t\Phi^\kappa_{ht}(x)$ is a submersion for every $x\in U_0$, $h>0$, and almost every\footnote{The Lebesgue-measure zero set $\{t\in\mathbb{R}^{\kappa n}:t\mapsto D_t\Phi^\kappa_{ht}(x)\ \text{is not a submersion}\}$ may depend on $x$ and $h$.} $t \in \mathbb{R}^{\kappa n}_+$. In particular, the transition kernel $P^\kappa_h$ is strong Feller on $U_0$ for every $h>0$ and has transition density $p_{\kappa,h}:U_0\times\mathcal{X}\to \mathbb{R}_{\geq 0}$ on $U_0$ given by
\begin{align}
	p_{\kappa,h}(x,y) &= \int_{\{t:\Phi^\kappa_{ht}(x)=y\}} \frac{\prod_{i=1}^{\kappa n}\rho(t_i)}{\sqrt{\det M(x,ht)}}\ \mathcal{H}^{\kappa n-d}(dt),
\end{align}
for almost every $y\in \mathcal{X}$ with $p_{\kappa,h}(x,y)=0$ otherwise, where $M(x,ht)\coloneqq D_t\Phi^\kappa(x,ht)D_t\Phi^\kappa(x,ht)^*$.
\end{proposition}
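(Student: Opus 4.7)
My plan is to reduce \Cref{prop:split_feller} to direct applications of \Cref{cor:density} and \Cref{thrm:feller}. The substance of the argument is to upgrade the pointwise submersion statement of \Cref{thrm:submersion} into a statement that is uniform in $x$ over a neighborhood of $x_*$ and holds at almost every positive time $t$; the upgrade will be powered by joint real-analyticity of the splitting.

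First I would invoke \Cref{thrm:submersion} at $x_*$: the Lie bracket hypothesis produces an integer $m$ and some $t^* \in \mathbb{R}^{mn}_+$ at which $t \mapsto \Phi^m(x_*, t)$ is a submersion, i.e. $\det M(x_*, t^*) \neq 0$ for $M(x, t) \coloneqq D_t\Phi^m(x, t)\, D_t\Phi^m(x, t)^*$. Since each $V_j$ is real-analytic and complete, the flows $\varphi^{(j)}$, and hence $\Phi^m$, are jointly real-analytic in $(x, t)$, so $(x, t) \mapsto \det M(x, t)$ is jointly real-analytic. In particular it is continuous in $x$, so after possibly shrinking I obtain an open neighborhood $U \subset \mathcal{X}$ of $x_*$ on which $\det M(x, t^*) \neq 0$ for every $x \in U$.

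Next I would fix arbitrary $x \in U$ and $h > 0$ and consider the real-analytic function $t \mapsto \det M(x, ht)$ on the connected open set $\mathbb{R}^{mn}_+$. Since $t^*/h \in \mathbb{R}^{mn}_+$ and the function takes the nonzero value $\det M(x, t^*)$ there, it is not identically zero. The identity theorem for real-analytic functions on a connected open subset of $\mathbb{R}^{mn}$ then forces its zero set to have Lebesgue measure zero. Translated back, this says $t \mapsto \Phi^m(x, ht)$ is a submersion at Lebesgue-almost every $t \in \mathbb{R}^{mn}_+$, for every $x \in U$ and every $h > 0$, which is precisely the first assertion of the proposition.

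Finally I would apply \Cref{cor:density} and \Cref{thrm:feller} with $\Psi(x, t) \coloneqq \Phi^m(x, ht)$ on $U \times \mathbb{R}^{mn}_+$, where the time space carries the product density $\rho(t_1)\cdots\rho(t_{mn})$ (absolutely continuous with respect to Lebesgue by our standing assumption on $\rho$). The almost-sure surjectivity just established is exactly the required hypothesis: \Cref{thrm:feller} yields the strong Feller property of $P_h^m$ on $U$, and \Cref{cor:density} produces the stated formula for $p_{m,h}$. The genuinely nontrivial step is the middle one, which is where I expect the main obstacle: passing from the single pair $(x_*, t^*)$ supplied by \Cref{thrm:submersion} to a full-measure set of times for \emph{every} nearby starting point requires both the continuity of $\det M$ (to move from $x_*$ to a neighborhood) and the real-analytic identity theorem (to spread a single nonvanishing time into a co-null set of times). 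A merely smooth splitting would not suffice for this step, which is why real-analyticity of the $V_j$ is built into the setting.
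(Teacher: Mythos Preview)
Your proposal is correct and follows essentially the same approach as the paper: invoke \Cref{thrm:submersion} to obtain $m$ and a single $t_*$ with $\det M(x_*,t_*)>0$, use continuity in $x$ to extend this to a neighborhood $U$, then use real-analyticity in $t$ together with the identity theorem (the paper cites this as \Cref{lem:Mityagin}) to conclude $\det M(x,h\,\cdot\,)$ vanishes only on a Lebesgue-null set for every $x\in U$ and $h>0$, and finish by applying \Cref{cor:density} and \Cref{thrm:feller}. Your commentary on why smoothness alone would not suffice is also apt.
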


\noindent The proof of \Cref{prop:split_feller} uses the following result from \cite{Mityagin}; see also \cite[Lemma 5.22]{kuchment}.

\begin{lemma}\label{lem:Mityagin}
Let $\Omega$ be a connected, open subset of $\mathbb{R}^n$. If $f:\Omega\to\mathbb{R}$ is analytic and not identically $0$, then $f^{-1}(0)$ has Lebesgue measure zero in $\Omega$.
\end{lemma}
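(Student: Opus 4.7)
\textbf{Proof plan for \Cref{lem:Mityagin}.} The plan is to proceed by induction on the dimension $n$, using Fubini's theorem together with the identity principle for real analytic functions on connected open sets (recall: if $f$ vanishes on a nonempty open subset of a connected open $\Omega\subset\mathbb{R}^n$, then $f\equiv 0$ on $\Omega$, since the set of points where all partial derivatives of $f$ vanish is simultaneously open and closed in $\Omega$). So throughout we may assume $f$ does not vanish on any nonempty open subset of $\Omega$.

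For the base case $n=1$, fix $x_0\in\Omega$ with $f(x_0)=0$. By the identity principle, $f$ does not vanish to infinite order at $x_0$, so $f(x) = (x-x_0)^k g(x)$ with $g$ analytic near $x_0$ and $g(x_0)\neq 0$; hence $x_0$ is an isolated zero. Thus $f^{-1}(0)$ is countable and has Lebesgue measure zero.

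For the inductive step, assume the result in dimensions $<n$, and write $x=(x',t)$ with $x'\in\mathbb{R}^{n-1}$ and $t\in\mathbb{R}$. For each $x'$, the slice $\Omega_{x'}:=\{t:(x',t)\in\Omega\}$ is open in $\mathbb{R}$ and $g_{x'}(t):=f(x',t)$ is analytic on $\Omega_{x'}$. By the base case applied to each connected component, $\{t\in\Omega_{x'}:g_{x'}(t)=0\}$ has one-dimensional measure zero whenever $g_{x'}$ does not vanish identically on any component of $\Omega_{x'}$. By Fubini, it therefore suffices to show that the bad set
\begin{equation*}
B \;:=\; \bigl\{x'\in\pi(\Omega) : g_{x'}\equiv 0 \text{ on some component of }\Omega_{x'}\bigr\}
\end{equation*}
has $(n-1)$-dimensional measure zero, where $\pi$ denotes projection onto the first $n-1$ coordinates.

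To bound $B$, cover $\Omega$ by countably many open rectangles $B_i=U_i\times J_i$ with $U_i\subset\mathbb{R}^{n-1}$ a connected open set and $J_i$ an open interval; this is possible since $\Omega$ is second countable. Fix any $t_i\in J_i$ and set $h_{i,k}(x'):=\partial_t^k f(x',t_i)$ for $k\geq 0$, which is analytic on $U_i$. If every $h_{i,k}$ vanished on $U_i$, then $f(x',\cdot)$ would have all derivatives vanishing at $t_i$ for each $x'\in U_i$, so $f\equiv 0$ on $U_i\times J_i$, contradicting (via the identity principle on connected $\Omega$) the assumption $f\not\equiv 0$. Hence some $h_{i,k_i}\not\equiv 0$ on the connected open set $U_i$, so by the inductive hypothesis its zero set $Z_i\subset U_i$ has measure zero. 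A short verification using connectedness of $J_i$ shows that if $x'\in B$ and $(x',t)\in B_i$ for some witness $t$, then $J_i$ lies in the same component of $\Omega_{x'}$ on which $g_{x'}$ vanishes, so all derivatives $h_{i,k}(x')$ vanish, placing $x'\in Z_i$. Therefore $B\subset\bigcup_i Z_i$ has measure zero, and Fubini completes the proof. The main subtlety is precisely this last step, where one must match the ``component'' quantifier in the definition of $B$ with the local rectangles $B_i$; the use of the identity principle inside each $B_i$ is what makes the induction close.
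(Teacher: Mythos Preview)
Your proof is correct. The paper does not actually give a proof of this lemma; it simply cites \cite{Mityagin} and uses the result as a black box. Your induction-on-dimension argument via Fubini and the identity theorem is the standard proof of this fact, and is essentially the argument in Mityagin's note as well. The one step you flag as delicate---matching the bad set $B$ to the local rectangles $B_i$---is handled correctly: since $J_i$ is an interval contained in $\Omega_{x'}$ and meets the vanishing component $C$, it lies entirely in $C$, forcing all $t$-derivatives of $f(x',\cdot)$ to vanish at $t_i$ and hence $x'\in Z_i$. So compared to the paper you have supplied a complete self-contained argument where the paper only gives a citation.
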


\begin{proof}[Proof of \Cref{prop:split_feller}]
By Theorem \ref{thrm:submersion} there exist $\kappa\in\mathbb{N}$ and $t_*\in \mathbb{R}^{\kappa n}_+$ \om{such that $t\mapsto\Phi^\kappa(x_*,t)$ is a submersion at $t_*$.} Define $f:\mathcal{X}\times\mathbb{R}^{\kappa n}_+\to\mathbb{R}$ by
\begin{align*}
	f(x,t) &\coloneqq f_t(x)
		\coloneqq \det D_t\Phi_t^\kappa(x)D_t\Phi_t^\kappa(x)^*
		= \det M(x,t).
\end{align*}
Then $f(x_*,t_*)>0$ and, by continuity, $U_0\coloneqq f_{t_*}^{-1}((0,\infty))$ is an open neighborhood of $x_* \in \mathcal{X}$. Now since the vector fields (and hence their flows) are analytic and analyticity is preserved under addition, multiplication, composition, and differentiation, $t\mapsto f(x,t)$ is analytic for all $x\in \mathcal{X}$. Also, for any $h>0$ and $x\in U_0$ we have $f(x,h(t_*/h))=f(x,t_*)>0$ so $t\mapsto f(x,t)$ is not identically $0$. And since $\mathbb{R}^{\kappa n}_+$ is connected and open in $\mathbb{R}^{\kappa n}$, \Cref{lem:Mityagin} implies $M(x,h\tau)$ is almost-surely invertible. Hence $t\mapsto \Phi^\kappa_{ht}(x)$ is a submersion for every $x\in U_0$, $h>0$, and almost every $t\in \mathbb{R}^{\kappa n}_+$. This proves the first part of the theorem. The expression for the transition density and the strong Feller property of $P^\kappa_h$ on $U_0$ then follow immediately from \Cref{cor:density} and \Cref{thrm:feller}, respectively.
\end{proof}

%%%%%%%%%%%%%%%%%%%%%%%%%%%%%%%%%%%%%%%%%%%%%

\section{General Properties Ruling Out Alternatives \ref{alt:conformal} and \ref{alt:subspaces}}\label{sec:alternatives}

%%%%%%%%%%%%%%%%%%%%%%%%%%%%%%%%%%%%%%%%%%%%%

If the hypotheses of Theorem \ref{thrm:main} hold and
$d\lambda_1(h)=\lambda_\Sigma(h)$, then Alternative \ref{alt:conformal} or
\ref{alt:subspaces} holds on an open set $U$ in $\mathcal{X}$
satisfying $\mu(U)=1$. In this section we give sufficient conditions
for these alternatives to not hold so that, in particular,
$d\lambda_1(h)\neq\lambda_\Sigma(h)$ whenever the aforementioned  hypotheses
are true. The primary mechanism for ruling out Alternative
\ref{alt:conformal} is shearing (Proposition \ref{prop:conformal});
Alternative \ref{alt:subspaces} is ruled out when the Lie bracket
condition holds at any point in $TU$ (Proposition
\ref{prop:subspaces}). \aa{Note that both these Propositions indirectly use our ongoing assumption that the support of $\rho$ includes the interval $(0,\epsilon)$ for $\epsilon > 0$ small enough as explained in the proof of \Cref{thrm:main}.}
% In the next section,\omnote{repetitive} we will apply these
% results to the randomly split Euler and Lorenz equations. We have
% presented the results in a more general context both to highlight the
% main ideas and as they may be useful in a more general context.

\begin{proposition}\label{prop:conformal}
	Suppose there exist $i,j,k$, and $\ell$ \aa{with $\ell \not \in \{j,k\}$} and a constant $C \neq 0$ such that
	\begin{align}\label{eq:shear}
		V_i(x) & = Cx_\ell(x_k e_j - x_j e_k),
	\end{align}
where $\{e_j\}$ is the standard basis for $\mathbb{R}^D$. Then Alternative \ref{alt:conformal} does not hold.
\end{proposition}

\begin{proof}
	The solution of $\dot{x}=V_i(x)$ starting from $x(0)$ is
	\begin{equ}
		\begin{cases}
			x_j(t) = x_j(0)\cos(C x_\ell t) + x_k(0)\sin(C x_\ell t)    \\
			{x}_k(t) = -x_j(0)\sin(C x_\ell t) + x_k(0)\cos(C x_\ell t) \\
			{x}_p(t) = {x}_p(0), \quad p\not\in\{j,k\}.
		\end{cases}
	\end{equ}
Restricting attention to $x_j,x_k$, and $x_\ell$ since these are the only coordinates that contribute nontrivially to the flow $\varphi\coloneqq\varphi^{(i)}$ of $V_i$, the derivative of $\varphi$ in the $j,k$, and $\ell$ coordinates is
	\begin{equ}
		\aa{D\phi_t(x)} =
		\begin{pmatrix}
			\partial_j\varphi_j    & \partial_k\varphi_j    & \partial_\ell\varphi_j    \\
			\partial_j\varphi_k    & \partial_k\varphi_k    & \partial_\ell\varphi_k    \\
			\partial_j\varphi_\ell & \partial_k\varphi_\ell & \partial_\ell\varphi_\ell
		\end{pmatrix}
		=
		\begin{pmatrix}
			\phantom{-}\cos(C x_\ell t) & \sin(C x_\ell t) & -C t \left(x_j\sin(C x_\ell t) - x_k\cos(C x_\ell t)\right)  \\
			-\sin(C x_\ell t)           & \cos(C x_\ell t) & -C t \left(x_j\cos(C x_\ell t) +  x_k\sin(C x_\ell t)\right) \\
			\phantom{-}0                & 0                & \phantom{-}1
		\end{pmatrix}.
	\end{equ}
	Evaluating at $t_{m'} \coloneqq 2 \pi {m'}/C x_\ell$ for any $x$ with $x_\ell\neq 0$ gives
	\begin{equ}\label{e:solution}
\aa{D\phi_{t_{m'}}(x)} =
		\begin{pmatrix}
			1 & 0 & \phantom{-}2 \pi {m'} \frac{x_k}{x_\ell} \\
			0 & 1 & -2 \pi {m'} \frac{x_j}{x_\ell}           \\
			0 & 0 & \phantom{-}1
		\end{pmatrix}
		= I + {m'}A
		\quad \text{where} \quad
		A =
		\begin{pmatrix}
			0 & 0 & \phantom{-}2 \pi \frac{x_k}{x_\ell} \\
			0 & 0 & -2 \pi \frac{x_j}{x_\ell}           \\
			0 & 0 & 0
		\end{pmatrix}.
	\end{equ}
	Suppose now Alternative \ref{alt:conformal} holds, i.e. there exists a Riemannian structure $\{g_x:x\in U\}$ on an open subset $U$ of $\mathcal{X}$ such that \eqref{eq:conformal} is valid \aa{for all $m$, $t \in \mathbb R_{\geq 0}^{mn}$}, $x\in U$, and $\eta,\xi \in T_x\mathcal{X}$. \aa{In particular, for $t=t_{m'}$ as above and $m=1$, since $\phi_{t_{m'}}(x) = x$ and $\det D \phi_{t_{m'}} (x) = 1$, \eqref{eq:conformal} reads}
	\begin{equ}\label{e:conformal}
		g_{\Phi_{h\tau}(x)}(D\Phi_{h\tau}(x)\eta, D\Phi_{h\tau}(x)\xi) = g_{\varphi_{t_{m'}}(x)}(D\varphi_{t_{m'}}(x)\eta, D\varphi_{t_{m'}}(x)\xi) = g_x(\eta, \xi).
		%		g_{\Phi_{h\tau}^{m'}(x)}(D\Phi_{h\tau}^{m'}(x)\eta, D\Phi_{h\tau}^{m'}(x)\xi) = g_{\varphi_{t_m}(x)}(D\varphi_{t_m}^{(i)}(x)\eta, D\varphi_{t_m}^{(i)}(x)\xi) = g_x(\eta, \xi).
	\end{equ}
	% Now consider a metric tensor on the tangent space $T_x\mathcal{X}$, which lifts to a metric tensor $\tilde g$ on the ambient space, $\mathbb{R}^D$. This metric tensor can be written as
	% \begin{equ}
	% 	\tilde g_x = \sum \lambda_j w_j^\top w_j
	% \end{equ}
	% for some basis $\{w_j\}$ of $T_x\mathbb{R}^D$ and positive constants $\{\lambda_j\}$.
	\aa{Hence, choosing $\eta=\xi$ with $A \xi \neq 0$ and  $\|\xi\|_{g_x} = 1$\footnote{{Here, $\|\xi\|_{g_x}$ denotes the norm on $T_x \mathcal X$ induced by $g$}}, by \eref{e:conformal} we must have that for any fixed $x$ with $x_\ell\neq 0$, for all ${m'}$
	\begin{equ}
		g_{x}(D\phi_{t_{m'}}\eta, D\phi_{t_{m'}} \xi) = \sum_j \lambda_j (w_j^\top D\phi_{t_{m'}}(x) \xi)^2 \aa{= 1}.
	\end{equ}
	However, since $\|D\phi_{t_{m'}}\eta\|^2 = \|\xi + {m'} A \xi\|^2 \to \infty$ as ${m'} \to \infty$  we have $g_{x}(D\phi_{t_{m'}}\eta, D\phi_{t_{m'}} \xi) \to \infty$, leading to a contradiction.}
 % Now by \eqref{e:solution} the above is a quadratic funtion of $m$ so if the coefficient of the largest exponent is nonzero it cannot be constant. The coefficient of the quadratic term is
	% \begin{equ}
	% 	\sum_j \lambda_j (w_j^\top A \eta)^2
	% \end{equ}
	% which is a sum of nonnegative terms. So it suffices to show at least one of them is nonzero. This however is the case since the $w_j$ span the tangent space at $x$, and the vector contained in $A$ is parallel to it, contradicting the assumption of conformal invariance.
	 So Alternative \ref{alt:conformal} cannot hold.
\end{proof}

\begin{proposition}\label{prop:subspaces}
If the Lie bracket condition holds at $\widetilde{x}_*$ in $TU$, then Alternative \ref{alt:subspaces} cannot hold.
\end{proposition}

\begin{proof}
	Analyticity of the vector fields implies the lifted splitting
	\begin{align}\label{eq:lift2}
		\widetilde\Phi^m_{h\tau}(x,\eta) & \coloneqq \big(\Phi^m_{h\tau}(x), D_x\Phi^m_{h\tau}(x)\eta\big),
	\end{align}
	which we now regard as a chain on $T\mathcal{X}$ rather than $P\mathcal{X}$, is also analytic. Therefore the Lie bracket condition at $\widetilde{x}_*$ together with an argument essentially identical to the proof of \Cref{thrm:feller} gives the existence of an $m$ such that the transition kernel of the lifted process $\widetilde P^m_h$ is strong Feller on a neighborhood $\widetilde{U}$ of $\widetilde{x}_*$ for every $h>0$. Fix such an $h$ and assume for simplicity $m=1$, which comes without loss of generality since the Lyapunov exponents of $\{\Phi^m_{h\tau}\}_{m=0}^\infty$ are \aa{a multiple\footnote{namely
$
		\lambda_k^{(\kappa)}(h) \coloneqq \lim_{m\to\infty}\tfrac{1}{m}\log\lVert D_x\Phi^{\kappa m}_{h\tau}(x)\eta\rVert
			= \kappa\lim_{m\to\infty}\tfrac{1}{\kappa m}\log \lVert D_x\Phi^{\kappa m}_{h\tau}(x)\eta\rVert
			= \kappa\lambda_k(h).
$
	That is, the Lyapunov exponents $\lambda^{(\kappa)}_k$ of $\{\Phi^{\kappa m}_{h\tau}\}$ are $\kappa$ times the Lyapunov exponents of $\{\Phi^m_{h\tau}\}$.} of} those of $\{\Phi^{m\kappa }_{h\tau}\}_{m=0}^\infty$  and both alternatives in Theorem \ref{thrm:main} hold for all $m$. In particular, $\lambda_1(h)>0$ if and only if $\lambda^{(\kappa)}_1(h)>0$. Also $d\lambda_1(h)=\lambda_\Sigma(h)$ if and only if $d\lambda_1^{(\kappa)}(h)=\lambda^{(\kappa)}_\Sigma(h)$. Also, by shrinking $\widetilde{U}$ if necessary, assume the projection $\pi(\widetilde{U})$ of $\widetilde{U}$ onto $\mathcal{X}$ is contained in $U$. If Alternative \ref{alt:subspaces} holds, by definition there exist for every $x\in U$ proper linear subspaces $E^1_x,\dots,E^p_x$ of $T_x\mathcal{X}$ such that
%	which we now regard as a chain on $T\mathcal{X}$ rather than $P\mathcal{X}$, is also analytic. Therefore the Lie bracket condition at $\widetilde{x}_*$ together with an argument essentially identical to the proof of \Cref{thrm:feller} gives the existence of $\kappa\in\mathbb{N}$ such that the transition kernel of the lifted process $\widetilde P^\kappa_h$ is strong Feller on a neighborhood $\widetilde{U}$ of $\widetilde{x}_*$ for every $h>0$. Fix $h>0$ and, shrinking $\widetilde{U}$ if necessary, assume the projection $\pi(\widetilde{U})$ of $\widetilde{U}$ onto $\mathcal{X}$ is contained in $U$. If Alternative \ref{alt:subspaces} holds, by definition there exist for every $x\in U$ proper linear subspaces $E^1_x,\dots,E^p_x$ of $T_x\mathcal{X}$ such that
	\begin{align}\label{eq:invariant}
		D_x\Phi^\kappa_t(x)(E^i_x) & = E^{\sigma(i)}_{\Phi^\kappa_t(x)}
	\end{align}
	for \aa{every $i$ and every $t \in \mathbb R_{\geq 0}^{\kappa n}$,}
	%\aa{every $i$ and every $t = (t_1, \dots t_{\kappa n})$ where for all $j \in \{1, \dots , \kappa n\}$, $t_j \in \supp \rho_h$}
	 where $\sigma$ is a permutation of $[p]$. In particular, setting
	\begin{align*}
		E_x & \coloneqq \bigcup_{i=1}^p E_x^i
	\end{align*}
	for each $x\in U$, the map $f:\widetilde{U}\to\mathbb{R}$
        given by\footnote{ The assumption that $\pi(\widetilde{U})$ is contained in $U$ guarantees $f$ is well-defined.}
	\begin{align*}
	f(x,\eta) & \coloneqq \1_{E_x}(\eta)
		\coloneqq
		\begin{cases}
			1 & \text{if}\ \eta\in E_x, \\
			0 & \text{otherwise},
		\end{cases}
	\end{align*}
	is bounded, measurable, and -- since the $E^i_x$ are \textit{proper} subspaces -- discontinuous. But by \eqref{eq:invariant},
	\begin{align*}
		\widetilde P^\kappa_hf(x,\eta) & = \mathbb{E}\big(\1_{E_{\Phi^\kappa_{h\tau}(x)}}\left(D_x\Phi^\kappa_{h\tau}(x)\eta\right)\big)
		= \mathbb{E}\left(\1_{D_x\Phi^\kappa_{h\tau}(x)(E_x)}\left(D_x\Phi^\kappa_{h\tau}(x)\eta\right)\right)
		= f(x,\eta)
	\end{align*}
	is discontinuous, which contradicts that $\widetilde P^\kappa_h$ is strong Feller on $\widetilde{U}$. So Alternative \ref{alt:subspaces} cannot hold.
\end{proof}

\begin{remark}
\om{The Lie bracket condition in \Cref{prop:subspaces} applies to the lifted vector fields $\widetilde V_j(x,\eta)=(V_j(x),DV_j(x)\eta)$. As mentioned above, it is harder in general to verify the Lie bracket condition for the lifted family $\widetilde V_j$ than the $V_j$. It is also worth noting the Lie bracket condition cannot hold on the zero section of $TU$ since $DV_j(x)0 = 0$ for all $x$ and $j$.}
\end{remark}

%%%%%%%%%%%%%%%%%%%%%%%%%%%%%%%%%%%%%%%%%%%%%

\section{Positive top Lyapunov exponent: Conservative Lorenz-96 and 2d Euler}\label{sec:models2}

%%%%%%%%%%%%%%%%%%%%%%%%%%%%%%%%%%%%%%%%%%%%%

We know from \Cref{thrm:models_ergodic} that for any generic orbit $\mathcal{X}$ of the Lorenz and Euler splittings there exists a unique
$P_h$-invariant measure $\mu$ on $\mathcal X$ for every $h>0$. Thus the Lyapunov exponents exist and are almost-surely constant on generic orbits. We also noted in \Cref{rmk:invariant} that each such $\mu$ is the disintegration of Lebesgue measure onto its respective orbit and is therefore invariant under $\Phi_{h\tau}$ for every $\tau$. In particular, the pushforward measures $\mu_m$ defined in \Cref{sec:main} satisfy $\mu_m=\mu$ and hence $\mathbb{E}D_{KL}(\mu_m\parallel\mu)=0<\infty$ for all $m$. Furthermore $\lambda_\Sigma(h)=0$ (and hence $\lambda_1(h)\geq 0$) since the splitting vector fields conserve Euclidean norm. This establishes the hypotheses of Theorem \ref{thrm:main} for random splittings on generic orbits of both Lorenz and Euler. And since $\lambda_\Sigma(h)=0$ in both cases, Theorem \ref{thrm:main} says that if $\lambda_1(h)=0$ then Alternative \ref{alt:conformal} or \ref{alt:subspaces} must hold. So to prove Theorems \ref{thrm:lorenz} and \ref{thrm:euler}, it remains to show neither alternative holds in both models. In Sections \ref{sec:model_alt1}  and \ref{sec:model_alt2} we rule out Alternatives \ref{alt:conformal} and \ref{alt:subspaces}, respectively, for both models. Since the latter is more involved, we separate \Cref{sec:model_alt2} into two subsections: \Cref{sec:lorenz_alt2} for Lorenz and \Cref{sec:euler_alt2} for Euler.

%%%%%%%%%%%%%%%%%%%%%%%%%%%%%%%%%%%%%%%%%%%%%

\subsection{Ruling out Alternative \ref{alt:conformal}}\label{sec:model_alt1}

%%%%%%%%%%%%%%%%%%%%%%%%%%%%%%%%%%%%%%%%%%%%%

Recall the splitting vector fields for Lorenz are
\begin{align*}
	V_j(x) &= x_{j-1}(x_{j+1}e_j-x_je_{j+1}),
\end{align*}
so Alternative \ref{alt:conformal} is immediately ruled out by Proposition \ref{prop:conformal}. For Euler, simple computation shows the $C_{jk}$ defined in \eqref{eq:constants} satisfy $C_{jk}=0$ and $C_{j\ell}=-C_{k\ell}$ whenever $\lvert j\rvert=\lvert k\rvert$ and $j+k+\ell=0$, e.g. when $j=(1,0)$, $k=(0,1)$, and $\ell=-(1,1)$. In this case the equation $\dot{q}=V_{a_ja_ka_\ell}(q)$ is given by
\begin{align*}
	\begin{cases}
		\dot{a}_j = C_{j\ell}a_\ell a_k \\
		\dot{a}_k = -C_{j\ell}a_\ell a_j \\
		\dot{a}_\ell = 0,
	\end{cases}
\end{align*}
which is equivalent to \eqref{eq:shear}. So Proposition \ref{prop:conformal} rules out Alternative \ref{alt:conformal} for Euler as well. All that remains then is to rule out Alternative \ref{alt:subspaces}, which we do via Proposition \ref{prop:subspaces}.

\begin{remark}
  For simplicity and efficiency, we have only leveraged the shearing
  in the diagonal case when $\lvert j\rvert=\lvert k\rvert$. Shearing,
  sufficient to rule out  Alternative \ref{alt:conformal},
  also exists in other non-diagonal triads.
\end{remark}

%%%%%%%%%%%%%%%%%%%%%%%%%%%%%%%%%%%%%%%%%%%%%

\subsection{Ruling out Alternative \ref{alt:subspaces}}\label{sec:model_alt2}

%%%%%%%%%%%%%%%%%%%%%%%%%%%%%%%%%%%%%%%%%%%%%

The splitting vector fields $V_j$ of a general random splitting on a $\mathcal V$-orbit $\mathcal{X}$ lift to vector fields $\widetilde{V}_j$ on the tangent bundle $T\mathcal{X}$ given by
\begin{align}\label{eq:lifted_fields}
	\widetilde{V}_j(\widetilde{x}) &=
		\begin{pmatrix}
			V_j(x) \\
			DV_j(x)\eta
		\end{pmatrix},
\end{align}
where $\widetilde{x}=(x,\eta)$. When regarded as vector fields on the ambient space $\mathbb{R}^D$, which is the perspective we take in what follows, the Lie brackets of the lifted vector fields are given by
\begin{align*}
	\big[\widetilde{V}_i,\widetilde{V}_j\big](\widetilde{x}) &= D\widetilde{V}_j(\widetilde{x})\widetilde{V}_i(\widetilde{x}) - D\widetilde{V}_i(\widetilde{x})\widetilde{V}_j(\widetilde{x}),
\end{align*}
where the derivative $D$ is with respect to $\widetilde{x}$. As noted above when ruling out Alternative \ref{alt:conformal}, for both Lorenz and Euler each of the splitting vector fields effectively depends on only three coordinates. We therefore reorder the coordinates $(x_1,\dots,x_D,\eta_1,\dots,\eta_D)$ as $(x_1,\eta_1,\dots,x_D,\eta_D)$ in what follows.

%%%%%%%%%%%%%%%%%%%%%%%%%%%%%%%%%%%%%%%%%%%%%

\subsubsection{Conservative Lorenz-96}\label{sec:lorenz_alt2}

%%%%%%%%%%%%%%%%%%%%%%%%%%%%%%%%%%%%%%%%%%%%%

Fix $n\geq 4$. The generic orbits of the Lorenz splitting are
\begin{align}\label{eq:lorenz_orbit}
	\mathcal{X}_R &= \bigg\{x\in\mathbb{R}^n : \lVert x\rVert=R\ \text{and}\ \sum_{j=1}^n(x_j^2+x_{j+1}^2)x_{j-1}^2\neq 0\bigg\},
\end{align}
for $R>0$. Each $\mathcal{X}_R$ is precisely the points on the sphere
$\mathbb{S}^{n-1}_R$ of radius $R$ in $\mathbb{R}^n$ that are not
fixed by \textit{all} the $V_j$. In particular, $\mathcal{X}_R$ is a
codimension 1 submanifold of $\mathbb{R}^n$ satisfying
$T_x\mathcal{X}_R=T_x\mathbb{S}^{n-1}_R$ for all $x\in \mathcal{X}_R$.
See \cite{Agazzi} for more discussion.
Fixing $R>0$, for brevity we  let $\mathcal{X}$ denote the
$\mathcal{X}_R$ from \eqref{eq:lorenz_orbit}. Our objective is to find a point $\widetilde{x}_* \in T\mathcal{X}$ at which the family $\widetilde{\mathcal{V}}=\{\widetilde{V}_j\}$ satisfies the Lie bracket condition, $\dim(\Lie_{\widetilde{x}}(\widetilde{\mathcal{V}}))=2n-2$.

In the coordinates $(x_1,\eta_1,\dots,x_n,\eta_n)$, the lifted vector fields of the Lorenz splitting are
\begin{align}\label{eq:lorenz_lift}
	\widetilde{V}_i(x,\eta) &=
		(
			0,
			\dots,
			0,
			x_{i-1}x_{i+1},
			\eta_{i-1}x_{i+1}+\eta_{i+1}x_{i-1},
			-x_{i-1}x_i,
			-\eta_{i-1}x_i-\eta_ix_{i-1},
			0,
			\dots,
			0
		),
\end{align}
where, in order from left to right, the nonzero entries correspond to the coordinates $x_i,\eta_i,x_{i+1}$, and $\eta_{i+1}$. Let $\widetilde{x}=(x,\eta)$ be any point of $T\mathcal{X}$ satisfying
\begin{align}\label{eq:lorenz_point}
	x &= (a,a,b,b,b,\dots,b)
		\quad\text{and}\quad
		\eta =
			\begin{cases}
				(1,-1,1,-1,\dots,1,-1), & \text{if $n$ even} \\
				(1,-1,1,-1,\dots,1,-1,0), & \text{if $n$ odd},
			\end{cases}
\end{align}
with $a,b\neq 0$. Note $\eta$ is perpendicular to $x$ as elements of $\mathbb{R}^n$ and is therefore a well-defined element of $T_x\mathcal{X}=T_x\mathbb{S}^{n-1}(R)$. Consider first the case when $n$ is even. Direct computation via \eqref{eq:lorenz_lift} shows that for $i=2,\dots,n-1$ the vector fields $\widetilde{V}_i$ and $[\widetilde{V}_{i-1},\widetilde{V}_i]$ evaluated at $\widetilde{x}$ form the $2n\times 2$ matrix
\begin{align*}
	\begin{pmatrix}
		\vline	&	\vline \\
		\big[\widetilde{V}_{i-1},\widetilde{V}_i\big](x,\eta) & \widetilde{V}_i(x,\eta)  \\
		\vline	&	\vline
	\end{pmatrix}
		&=
		\begin{pmatrix}
			\bigstar \\
			A_i \\
			\text{\large \textbf{0}}
		\end{pmatrix},
\end{align*}
where $\bigstar$ indicates irrelevant entries, \textbf{0} indicates the rest of the matrix is filled with zeros, and the $2\times 2$ matrix $A_i$, which comprises the $2i+1$ and $2i+2$ rows of the matrix, is given by
\begin{align*}
	A_2 &=
		\begin{pmatrix}
			0 & -a^2 \\
			4ab & \ph 0
		\end{pmatrix},
		\quad
		A_3 =
			\begin{pmatrix}
				\ph a(a^2-b^2) & -ab \\
				-(a+b)^2 & \ph b-a
			\end{pmatrix},
		\quad
		A_4 =
			\begin{pmatrix}
				0 & -b^2 \\
				4ab & \ph 0
			\end{pmatrix},
		\quad
		A_i =
			\begin{pmatrix}
				\ph 0 & -b^2 \\
				\pm 4b^2 & \ph 0
			\end{pmatrix},
\end{align*}
with the last $A_i$ holding for all $i>4$. Define the $2n\times 2n-2$ matrix
\begin{align*}
	\mathcal{A} &\coloneqq
	{\small
	\begin{pmatrix}
		\vline & \vline & \vline & & \vline & \vline & \vline \\
		\widetilde{V}_1 & [\widetilde{V}_1,\widetilde{V}_2] & \widetilde{V}_2 & \cdots & [\widetilde{V}_{n-2},\widetilde{V}_{n-1}] & \widetilde{V}_{n-1} & \widetilde{V}_n  \\
		\vline & \vline & \vline & & \vline & \vline & \vline
	\end{pmatrix}
	}
	=
	{\small
	\begin{pmatrix}
		\\
		\multicolumn{8}{c}{B} \\
		\\
		\hline
		0& \multicolumn{1}{|c|}{A_2} &\star&\star& \star&\star&\star&\star \\
		\cline{2-3}
		0&0& \multicolumn{1}{|c|}{A_3} &\star&\star&\star&\star&\star \\
		\cline{3-4}
		0&0&0& \multicolumn{1}{|c|}{A_4} & \star & \star & \star&\star \\
		\cline{4-5}
		0&0&0&0& \multicolumn{1}{|c|}{A_5} & \star & \star & \star \\
		\cline{5-5}
		\vdots&\vdots&\vdots&\vdots&\vdots & \ddots & \vdots &\vdots \\
		\cline{7-7}
		0&0&0&0&0&\cdots & \multicolumn{1}{|c|}{A_{n-1}} & \star \\
		\cline{7-7}
	\end{pmatrix},
	}
\end{align*}
where $B$ is the $4\times 2n-2$ matrix
\begin{align*}
	B &=
		\begin{pmatrix}
			\ph ab & -ab^2 & 0 &\ph 0 &0&\cdots&0&0 \\
			-a-b & -b^2 & 0 & \ph 0 &0&\cdots&0&4b^2 \\
			-ab & \ph ab^2 & ab & -ab^2 &0&\cdots&0&0 \\
			\ph a-b & -b^2 & a+b & \ph a^2 &0&\cdots&0&0
		\end{pmatrix}.
\end{align*}
We claim $\mathcal{A}$ has rank $2n-2$ for certain choices of $a$ and $b$. First, note that $A_2,A_4$, and $A_i$, $i>4$, have rank $2$ whenever $a,b\neq 0$. Also $A_3$ has rank $2$ whenever $a,b\neq 0$ and
\begin{align}\label{eq:a3}
	a^3+ab^2+2b^3 &\neq 0.
\end{align}
Row reducing $B$ gives the matrix
\begin{align*}
	B' &=
		\begin{pmatrix}
			ab & \star & 0 & \ph 0 &0&\cdots&0&0 \\
			0 & \star & 0 & \ph 0 &0&\cdots&0&\star \\
			0 & 0 & \star & \ph \star &0&\cdots&0&0 \\
			0 & 0 & 0 & -a^5b^5(2a^2+5ab+2b^2) &0&\cdots&0&4a^4b^7(2b-a) \\
		\end{pmatrix}.
\end{align*}
The $\star$ entries, though easily computed and simply expressed, are redacted to emphasize the relevant terms. Suppose that, in addition to $a,b\neq 0$, the relations
\begin{align}\label{eq:relations}
	\begin{cases}
		2a^2 + (n-2)b^2 = R^2    \\
		2a^2 + 5ab + 2b^2 = 0    \\
		a^3 + ab^2 + 2b^3 \neq 0 \\
		2b - a \neq 0
	\end{cases}
\end{align}
hold. Direct substitution verifies all the above are satisfied when
\begin{align}\label{eq:ab}
	a & = -\frac{R}{\sqrt{4n-6}}
	\quad\text{and}\quad
	b = \frac{\sqrt{2}R}{\sqrt{2n-3}}.
\end{align}
The first relation in \eqref{eq:relations} guarantees $x=(a,a,b,\dots,b)$ satisfies $\lVert x\rVert=R$ and therefore lies on $\mathcal{X}$, and the third guarantees $A_3$ has rank 2 by \eqref{eq:a3}. The second and fourth guarantee $B'$ has the form
\begin{align*}
	B' & =
	\begin{pmatrix}
		ab & \star & 0     & 0     & 0 & \cdots & 0 & 0     \\
		0  & \star & 0     & 0     & 0 & \cdots & 0 & \star \\
		0  & 0     & \star & \star & 0 & \cdots & 0 & 0     \\
		0  & 0     & 0     & 0     & 0 & \cdots & 0 & c     \\
	\end{pmatrix}
\end{align*}
for some nonzero constant $c$. Replacing $B$ with $B'$ in $\mathcal{A}$ and moving the fourth row of $B'$ to the last row of the whole matrix gives a new matrix
\begin{align}\label{eq:new_matrix}
	{\small
		\begin{pmatrix}
			\multicolumn{1}{c|}{ab} & \multicolumn{7}{c}{}                                                                                                                                                               \\
			\cline{1-1}
			\multicolumn{1}{c|}{0}  & \multicolumn{7}{c}{\bigstar}                                                                                                                                                       \\
			\multicolumn{1}{c|}{0}  & \multicolumn{7}{c}{}                                                                                                                                                               \\
			\cline{2-8}
			0                       & \multicolumn{1}{|c|}{A_2}    & \star                     & \star                     & \star                     & \cdots & \star                         & \star                  \\
			\cline{2-3}
			0                       & 0                            & \multicolumn{1}{|c|}{A_3} & \star                     & \star                     & \cdots & \star                         & \star                  \\
			\cline{3-4}
			0                       & 0                            & 0                         & \multicolumn{1}{|c|}{A_4} & \star                     & \cdots & \star                         & \star                  \\
			\cline{4-5}
			0                       & 0                            & 0                         & 0                         & \multicolumn{1}{|c|}{A_5} & \cdots & \star                         & \star                  \\
			\cline{5-5}
			\vdots                  & \vdots                       & \vdots                    & \vdots                    & \vdots                    & \ddots & \vdots                        & \vdots                 \\
			\cline{7-7}
			0                       & 0                            & 0                         & 0                         & 0                         & \cdots & \multicolumn{1}{|c|}{A_{n-1}} & \star                  \\
			\cline{7-8}
			0                       & 0                            & 0                         & 0                         & 0                         & \cdots & 0                             & \multicolumn{1}{|c}{c}
		\end{pmatrix}.
	}
\end{align}
Since each $A_i$ has rank 2 and $ab,c\neq 0$, this matrix -- and hence $\mathcal{A}$ itself -- has rank $2n-2$.

When $n$ is odd, everything is essentially the same. Only the matrix $B$ is different, but its row reduced form is identical to the $B'$ above, up to the irrelevant $\star$ terms. In particular, the matrix corresponding to $\mathcal{A}$ in the odd case becomes the matrix \eqref{eq:new_matrix} via the exact same procedure detailed above when subjected to the same relations defined in \eqref{eq:relations}. Thus the the Lie bracket condition holds for the lifted process at the point $\widetilde{x}$ defined as in \eqref{eq:lorenz_point} with $a$ and $b$ as in \eqref{eq:ab} for all $n\geq 4$. So by Proposition \ref{prop:subspaces} Alternative \ref{alt:subspaces} cannot hold. And by Theorem \ref{thrm:main} the top Lyapunov exponent of the random splitting of conservative Lorenz-96 is positive.

%%%%%%%%%%%%%%%%%%%%%%%%%%%%%%%%%%%%%%%%%%%%%

\subsubsection{2D Euler}\label{sec:euler_alt2}

%%%%%%%%%%%%%%%%%%%%%%%%%%%%%%%%%%%%%%%%%%%%%

As in the previous section, the main idea of the proof is to fix a point $(q^*, \eta^*)\in T\mathcal X$, check that the Lie Bracket condition holds for $\mathcal V$ from \eqref{eq:euler_fields} at that point, and invoke \Cref{prop:subspaces}. In this spirit, we proceed to choose a subset of vector fields and related commutators (indexed by the related triples of interacting indices) from $\mathcal{V}$ whose spanning dimension, when  evaluated at $(q^*, \eta^*)$, can be readily deduced. Concretely, this will be done by computing the rank of the matrix whose columns are given by such vector fields. To increase readibility, while presenting the full idea of the computation and its results we have suppressed the lengthy algebraic manipulations such computation entails. In the interest of reproducibility, Mathematica code to reproduce such computations is available at \cite{code}. The code's inputs and outputs are reported in Appendix~\ref{a:computations}.

Focusing on a given triple $j,k,\ell \in \mathbb{Z}^2_N$, we define the vector fields in $\mathcal X$ which we express in $6$ dimensions -- corresponding to coordinates $(a_j,b_j, a_k,b_k,a_\ell,  b_\ell)$ as
\begin{equ}\label{e:vflong}
	V_{jk\ell}^{(1)} =
	\begin{pmatrix}
		-\ckl a_k a_\ell \\
		0                \\
		-\clj a_j a_\ell \\
		0                \\
		-\cjk a_ja_k     \\
		0
	\end{pmatrix}
	\quad
	V_{jk\ell}^{(2)} =
	\begin{pmatrix}
		\ckl b_kb_\ell \\
		0              \\
		0              \\
		\clj a_jb_\ell \\
		0              \\
		\cjk a_jb_k
	\end{pmatrix}
	\quad
	V_{jk\ell}^{(3)} =
	\begin{pmatrix}
		0              \\
		\ckl a_kb_\ell \\
		\clj b_jb_\ell \\
		0              \\
		0              \\
		\cjk b_ja_k
	\end{pmatrix}
	\quad
	V_{jk\ell}^{(4)} =
	\begin{pmatrix}
		0              \\
		\ckl b_ka_\ell \\
		0              \\
		\clj b_ja_\ell \\
			\cjk b_jb_k    \\
		0
	\end{pmatrix}
\end{equ}
The commutators of these vector fields read
\begin{align*}\tiny
	[V_{jk\ell}^{(1)},V_{jk\ell}^{(2)}]  =
	\begin{pmatrix}
		0                         \\
		0                        \\
		\ckl\clj a_\ell b_k b_\ell  \\
		-	\ckl\clj b_\ell a_k a_\ell \\
		\ckl\ckj a_k b_k b_\ell     \\
		-\ckl\ckj b_k a_k a_\ell
	\end{pmatrix}
 \qquad
	[V_{jk\ell}^{(1)},V_{jk\ell}^{(3)}]  =
	\begin{pmatrix}
		\ckl\cjl a_\ell b_j b_\ell  \\
		-	\ckl\cjl b_\ell a_j a_\ell \\
		0                         \\
		0                       \\
		\ckj\clj a_j b_j b_\ell     \\
		-\cjl\ckj b_k a_j a_\ell
	\end{pmatrix}
	\qquad
	[V_{jk\ell}^{(1)},V_{jk\ell}^{(4)}]  =
	\begin{pmatrix}
		\ckl\cjk a_k b_j b_k  \\
		-	\ckl\cjk a_j a_k b_k \\
		\cjl\cjk a_j b_j b_k  \\
		-\cjl\cjk a_j a_k b_j \\
		0                   \\
		0
	\end{pmatrix}
\\\tiny
	[V_{jk\ell}^{(2)},V_{jk\ell}^{(3)}]
	                   =
	\begin{pmatrix}
		-	 \ckl\cjk a_k b_j b_k \\
		\ckl\cjk a_j a_k b_k   \\
		\cjl\cjk a_j b_j b_k   \\
		-\cjl\cjk a_j a_k b_j  \\
		0                    \\
		0
	\end{pmatrix}
	\qquad
	[V_{jk\ell}^{(2)},V_{jk\ell}^{(4)}]
	                   =
	\begin{pmatrix}
		-\ckl\cjl a_\ell b_j b_\ell \\
		\ckl\cjl b_\ell a_j a_\ell  \\
		0                         \\
		0                        \\
		\ckj\clj a_j b_j b_\ell     \\
		-\cjl\ckj b_k a_j a_\ell
	\end{pmatrix}\qquad
	[V_{jk\ell}^{(3)},V_{jk\ell}^{(4)}]
	                   =
	\begin{pmatrix}
		0                         \\
		0                       \\
		-\ckl\clj a_\ell b_k b_\ell \\
		\ckl\clj b_\ell a_k a_\ell  \\
		\ckl\ckj a_k b_k b_\ell     \\
		-\ckl\ckj b_k a_k a_\ell
	\end{pmatrix}
\end{align*}
This yields the extended, $12$-dimensional vector fields on $T \mathcal X$ obtained by stacking the vector fields \eqref{e:vflong} and their commutators with the corresponding linearizations. Throughout, we represent such vector fields reordering the coordinates of $T\mathcal X$ for a triple of indices $j,k,\ell \in \mathbb Z_N^2$ as
\begin{equ}
(a_j,b_j,\eta_j^a,\eta_j^b,a_k,b_k,\eta_k^a, \eta_k^b,a_\ell,b_\ell, \eta_\ell^a, \eta_\ell^b)\,.
\end{equ}
where for any $j \in \mathbb Z_N^2$, $\eta_j^a, \eta_j^b$ are the linearization coordinates corresponding to $a_j,b_j$\,.
Then, denoting by $[f]$ the linearization of a map $f~:~\mathbb R^6 \to \mathbb R$  , {i.e.,}
$	[f](x,\eta) := Df \cdot \eta$\,,
we write such extended vector fields for a given triple $j,k,\ell \in \mathbb Z_N^2$ as the columns of a matrix $M_{jk\ell}$, obtaining
\begin{equs}M_{jk\ell} &= \tiny \begin{pmatrix}
	\vline & \vline &\vline	&	\vline&\vline & \vline &\vline	&	\vline&\vline & \vline  \\
	{V}_{jk\ell}^{(1)}& {V}_{jk\ell}^{(2)}&{V}_{jk\ell}^{(3)}& {V}_{jk\ell}^{(4)}&    \big[{V}_{jk\ell}^{(1)},{V}_{jk\ell}^{(2)}\big] & \big[{V}_{jk\ell}^{(1)},{V}_{jk\ell}^{(3)}\big]&\big[{V}_{jk\ell}^{(1)},{V}_{jk\ell}^{(4)}\big] & \big[{V}_{jk\ell}^{(2)},{V}_{jk\ell}^{(3)}\big]&\big[{V}_{jk\ell}^{(2)},{V}_{jk\ell}^{(4)}\big] & \big[{V}_{jk\ell}^{(3)},{V}_{jk\ell}^{(4)}\big] \\
	\vline & \vline &\vline	&	\vline&\vline & \vline &\vline	&	\vline&\vline & \vline
\end{pmatrix} \\
&	={\tiny\begin{pmatrix}
		\ckl a_k a_\ell   & \ckl b_k b_l      & 0                 & 0                 & \bkal     & \blak     & 0        & 0        & -\blak   & - \bkal          \\
		0                  & 0                 & \ckl a_k b_\ell   & \ckl b_k a_\ell   & -\bkbl    & -\blbk    & 0        & 0        & \blbk    & \bkbl           \\
		\ckl [a_k a_\ell] & \ckl[ b_k b_l]    & 0                 & 0                 & [\bkal]   & [\blak]   & 0        & 0        & -[\blak] & - [\bkal]        \\
		0                  & 0                 & \ckl [a_k b_\ell] & \ckl [b_k a_\ell] & -[\bkbl]  & -[\blbk ] & 0        & 0        & [\blbk]  & [ \bkbl ]       \\
				\cjl a_j a_\ell   & 0                 & \cjl b_j b_\ell   & 0                 & \bjal     & 0         & \blaj    & -\blaj   & 0        & \bjal            \\
		0                  & \cjl a_j b_\ell   & 0                 & \cjl b_j a_\ell   & - \bjbl   & 0         & -\blbj   & \blbj    & 0        & -\bjbl           \\
		\cjl [a_j a_\ell] & 0                 & \cjl [b_j b_\ell] & 0                 & [\bjal]   & 0         & [\blaj]  & -[\blaj] & 0        & [\bjal]         \\
0                  & \cjl [a_j b_\ell] & 0                 & \cjl [b_j a_\ell] & - [\bjbl] & 0         & -[\blbj] & [\blbj]  & 0        & -[\bjbl]         \\
				\cjk a_j a_k      & 0                 & 0                 & \ckj b_k b_j      & 0         & \bjak     & \bkaj    & \bkaj    & \bjak    & 0          \\
		0                  & \cjk a_j b_k      & \cjk a_k b_j      & 0                 & 0         & -\bjbk    & -\bkbj   & -\bkbj   & -\bjbk   & 0          \\
		\cjk [a_j a_k]    & 0                 & 0                 & \ckj [b_k b_j]    & 0         & [\bjak]   & [\bkaj ] & [\bkaj]  & [\bjak]  & 0          \\
0                  & \cjk [a_j b_k]    & \cjk [a_k b_j]    & 0                 & 0         & -[\bjbk]  & -[\bkbj] & -[\bkbj] & -[\bjbk] & 0
\end{pmatrix}}\,,
\end{equs}
with
\begin{equ}
	B_{k,a}^\ell := B_k a_\ell \qquad \text{ for }\qquad B_k := \cjk\ckl a_kb_k\,.
	\end{equ}

	We further denote by $M_{jk\ell}'$ the last four rows intersected with the first,  second,  sixth and  seventh column of the above matrix:
	\begin{equ}\label{e:matrix1}\small
	M_{jk\ell}' := \begin{pmatrix}
				\cjk a_j a_k      & 0                      & \bjak     & \bkaj          \\
		0                  & \cjk a_j b_k            & -\bjbk    & -\bkbj         \\
		\cjk [a_j a_k]    & 0                     & [\bjak]   & [\bkaj ]        \\
0                  & \cjk [a_j b_k]          & -[\bjbk]  & -[\bkbj]
	\end{pmatrix}\,.
\end{equ}

To check spanning, we now write a subset of the vector fields introduced above as the columns of a matrix whose rank will be shown to be $\text{dim}T\mathcal X = 4n-4$. To choose such vector fields, we introduce a convenient enumeration of the index space $\mathbb{Z}^2_N$ as the bijection $F:\{0,\dots,2N(N+1)\} \to (\mathbb{Z}^2_N)^3$ given by
\begin{align*}
 F(i) &\coloneqq
	 \begin{cases}
		 (1,0),(-N+i,1),(-N+i+1,1) & i \leq N-2\\
		 (-1,1),(-3,1),(2,0) & i=N-1\,, \\
		 (2,0),(-2,1),(0,1) & i=N\,\\
		 (0,1),(2,0),(2,1) & i = N+1\\
		 (1,0),(2,1),(1,1) & i = N+2\\
		 (1,0),(-N+i-1,1),(-N+i,1) & N+3\leq i \leq 2N\\
		 (0,1),(-2N+i,1),(-2N+i+1,0) & 2N+1\leq i \leq 3N-1\,\\
		 (1,0),(L_1(i),L_2(i)),(L_1(i)+1,L_2(i)) & 3N\leq i \leq 2N(N+1)\,
	 \end{cases}
\end{align*}
where $(L_1(i),L_2(i))$ is the element of $\mathbb{Z}^2_N$ obtained by starting from $(-N,1)$ (for $j = 3N$) and for each new value of $j$ proceeding incrementally to the right (i.e., adding $1$ to the first component of the two-dimensional index) until $(N,1)$, then moving to the row above at $(-N,2)$, then again moving progressively to the right until $(N,2)$ and so on until $(N,N)$. Formally $(L_1(i), L_2(i))$ is therefore defined as
\begin{equs}
	L_1(i) &:= -N+\text{mod}(i-3N,2N+1) \\
	L_2(i) &:= \lfloor (i-3N)/(2N+1)\rfloor\,,
\end{equs}
where $\text{mod}(\,\cdot\,,\,\cdot\,)$ and $\lfloor  \,\cdot\,\rfloor$ respectively denote integer part and the modulo operation, i.e., the remainder of division of the first argument by the second.
Note that this enumeration differs from the one exemplified in \cite[Figure 2]{Agazzi}. This ordering of the indices is motivated by the fact that each time we let a new triple interact, two of the indices have already interacted in a previous triple, adding exactly a new one.  Because of this, from this enumeration of the triples follows an enumeration of the indices: after mapping the first three interacting indices $(1,0),(-N+i,1),(-N+i+1,1)$ to $1,2,3$, we proceed incrementally assigning the number $i$ to the third element of $F(i-3)$. We also note for future reference that for all interacting triples considered above we have nonvanishing interaction constants $C_{jk}, C_{j\ell}, C_{k\ell}$.

Using the above enumeration we can define
\begin{align*}
 \mathcal A_i = \begin{pmatrix}
	 \vline & \vline &\vline	&	\vline \\
	 {V}_{F(i)}^{(1)}& {V}_{F(i)}^{(2)}&    \big[{V}_{F(i)}^{(1)},{V}_{F(i)}^{(3)}\big] & \big[{V}_{F(i)}^{(1)},{V}_{F(i)}^{(4)}\big]  \\
	 \vline	&	\vline& \vline &\vline
 \end{pmatrix}
	 &=
	 \begin{pmatrix}
		 \bigstar \\
		 M_{F(i)}' \\
		 \text{\large \textbf{0}}
	 \end{pmatrix},
\end{align*}
where $\bigstar$ indicates irrelevant entries, \textbf{0} indicates the rest of the matrix is filled with zeros. We then define the $4n\times 4n-4$ matrix
\begin{align*}
 \mathcal{A} &\coloneqq
 {\small
 \begin{pmatrix}
	 M_{0} & \vline & \vline & & \vline \\
		&\mathcal A_1  & \mathcal A_1 & \dots&\mathcal A_{n-3}  \\
	 \text{\large \textbf{0}} & \vline & \vline & & \vline
 \end{pmatrix}
 }
 =
 {\small
 \begin{pmatrix}
	 &&\multicolumn{1}{c|}{ }   &\star&\star&\star& \star&\star&\star\\
	 &\multicolumn{1}{c}{ M_{0}}&\multicolumn{1}{c|}{ }  &\star&\star&\star& \star&\star&\star\\
	 &&\multicolumn{1}{c|}{ }   &\star&\star&\star& \star&\star&\star\\
	 \cline{1-4}
	 &0&& \multicolumn{1}{|c|}{M_{F(1)}'} &\star&\star& \star&\star&\star \\
	 \cline{4-5}
	 &0&&0& \multicolumn{1}{|c|}{M_{F(2)}'} &\star&\star&\star&\star \\
	 \cline{5-6}
	 &0&&0&0& \multicolumn{1}{|c|}{M_{F(3)}'} & \star & \star & \star \\
	 \cline{6-7}
	 &0&&0&0&0& \multicolumn{1}{|c|}{M_{F(4)}'} & \star & \star &  \\
	 \cline{7-7}
	 &\vdots&&\vdots&\vdots&\vdots&\vdots & \ddots & \vdots  \\
	 \cline{9-9}
	 &0&&0&0&0&0&\cdots & \multicolumn{1}{|c|}{M_{F(n-3)}'}  \\
	 \cline{9-9}
 \end{pmatrix},
 }
\end{align*}
where $M_{0}$ is a $12\times 8$ matrix obtained by removing the eigth and the tenth column from $M_{F(0)}$.
Note that the above matrix inherits its structure from the indexing of
the interacting triples: each triple contains exactly one index that
has not yet interacted with other modes. To prove the desired result,
it therefore remains to show that for every compatible\footnote{ We call a pair of conserved quantities compatible when there exists a nondegenerate point in state space with the given energy and enstrophy. This is true if $E/(2N^2) < \mathcal E < E$} choice of conserved quantities $(E,\mathcal E)$ there exists a point $(q^*, \eta^*)$ where each of the $n-3$ block elements has rank $4$ and the $M_{0}$ matrix has rank $8$, thereby yielding the desired total rank of $4n-4$.
Proceeding with this plan, in the following we fix the vector $(q^*,\eta^*)$ where we establish spanning as
\begin{equ}
	a_j^*  = \begin{cases}
	\alpha_1\quad &\text{if } j \in \{(0,1),(1,0)\}\\
		\alpha_2\quad &\text{if } j = (N,N)\\
			\beta \quad &\text{else}
	\end{cases}
	\qquad
	b_j^*  = \begin{cases}
	2a_j^*\quad &\text{if } j \in \{(0,1),(1,0),(-1,1),(2,0)\}\\
		a_j^*\quad &\text{else }
	\end{cases}
\end{equ}
\begin{equ}
	(\eta^*)_j^\#  = \begin{cases}
	1\quad &\text{if } \# = a\\
		-1/2\quad &\text{if } j \in \{(0,1),(1,0),(-1,1),(2,0)\}, \# = b\\
		-1 \quad &\text{else}
	\end{cases}
\end{equ}
for values of $\alpha_1, \alpha_2 > 0$ to be chosen shortly and $\beta > 0$ a sufficiently small, free parameter. Throughout, we denote the set of indices for which the complex part is twice the real part in $q^*$ by $J_2 := \{(0,1),(1,0),(-1,1),(2,0)\}$.
Note that  $\eta^*\in T_{q^*}\mathcal Q$ since
\begin{equ}
	\eta^* \cdot \nabla E = \eta^*\cdot x^* = 0\qquad \text{and}\qquad \eta^* \cdot \nabla \mathcal E = \sum_{j \in \mathbb{Z}^2_N} \frac 1 {|j|^2}( (\eta^*)_j^a a_j^* + (\eta^*)_j^b b_j^*) = 0\,.
	\end{equ}
	Furthermore, it is easy to check that for every compatible pair $(E, \mathcal E)$, we have $\mathcal E(q^*) = \mathcal E$, $\|q^*\| = E$ upon choosing
	\begin{equ}
		\alpha_1  := \sqrt{\frac 1 {10} G_{\mathcal E, E} (\beta)} \,,\qquad
		\alpha_2  := \frac 1{\sqrt 2} \sqrt{E -2\beta^2 \pq{\frac {15} 8 + \sum_{k \in \mathbb{Z}^2_N\setminus J_2}\frac 1 {|k|^2}} - G_{\mathcal E, E} (\beta)}\,,
	\end{equ}
	where
	\begin{equ}
		G_{\mathcal E, E} (\beta) := \frac 1{1-\frac 1{2N^2}}\pc{\mathcal E - \frac 1 {2N^2}E - 2\beta^2 \pq{\pc{\frac {15} 8 - \frac 1 {N^2}} + \sum_{k \in \mathbb{Z}^2_N\setminus J_2}\pc{\frac 1 {|k|^2} - \frac 1 {2N^2}} }}
	\end{equ}
	for $\beta \in (0,Z_+(\mathcal E, E))$, $Z_+(\mathcal E, E)$ being the positive zero of $G_{\mathcal E, E} (\beta)$. Here, the expression $2\beta^2 \frac {15}8$ results from  $\frac {a_k^2}{|k^2|} +\frac {b_k^2}{|k^2|}$ for $k \in \{(-1,1),(0,2)\}$\,.

	We further note that, for any triple $(j,k,\ell) \in \{F(i)\}_{i \in [n-3]\setminus \{N+1\}}$, since $j \in J_2$, $k \not \in J_2 \cup\{ (N,N)\}$, the vector $(q^*, \eta^*)$ projected on the interacting coordinates can be written as
	\begin{equ}
(a_j,b_j,\eta_j^a,\eta_j^b,a_k,b_k,\eta_k^a, \eta_k^b,a_\ell,b_\ell, \eta_\ell^a, \eta_\ell^b) = (\gamma_1, 2\gamma_1, 1, -1/2 , \beta,\beta,,1,-1, \gamma_2, m \gamma_2,1,-1/m)\,,
	\end{equ}
with $\gamma_1 \in \{\alpha_1, \beta\}$, $\gamma_1 \in \{\alpha_1, \alpha_2, \beta\}$, $m = 2$ for $\ell \in \{ (0,1), (1,0),(-1,1),(0,2)\}$ and $m = 1$ otherwise.
	Evaluating $M_{jk\ell}'$ at $(q^*, \eta^*)$ for such interacting triples one therefore obtains a matrix of the form
\begin{equ}
	M_{jk\ell}' = \left(
\begin{array}{cccc}
 \beta  \gamma _1 C_{jk} & 0 & 2 C_{jk} C_{j\ell} \gamma _1^2 \gamma _2 m  & -\beta ^2 \gamma _2 m C_{jk} C_{k\ell} \\
 0 & \beta  \gamma _1 C_{jk} & -2 \gamma _1^2 \gamma _2 C_{jk} C_{j\ell} & \beta ^2 \gamma _2 C_{jk} C_{k\ell} \\
 \beta  C_{jk}+\gamma _1 C_{jk} & 0 & C_{jk} C_{j\ell}\pc{\frac{3}{2} \gamma _1 \gamma _2 m -\frac{2 \gamma _1^2 }{m}} & \frac{\beta ^2 C_{jk} C_{k\ell}}{m} \\
 0 & \beta  C_{jk}-\gamma _1 C_{jk} & C_{jk} C_{j\ell}\pc{-2 \gamma _1^2 -\frac{3}{2} \gamma _2 \gamma _1 } & \beta ^2 C_{jk} C_{k\ell} \\
\end{array}
\right)\,,
\end{equ}
whose determinant is given by
\begin{equ}\label{e:det1}
	\det (M_{jk\ell}') = \frac{3 \beta ^3 \gamma _1^3 \gamma _2 C_{jk}^4 C_{j\ell} C_{k\ell} \left(\beta +\beta  m^2+2 \gamma _2 m^2\right)}{2 m}\,.
	\end{equ}
	 For any choice of $\gamma_1 \in \{\alpha_1, \beta\}$, $\gamma_1 \in \{\alpha_1, \alpha_2, \beta\}$ and $m \in \{1,2\}$,  this determinant is a polynomial in $\beta \in (0,Z_+(\mathcal E, E))$ and as such is nonzero ouside on a set of measure $0$. This establishes that for any pair $(E, \mathcal E)$, $M_{F(i)}'$ for $i \in  \{4,\dots\}$ has full rank for almost every value of $\beta \in (0,Z_+(\mathcal E, E))$.

The only interactions that were not considered above are the ones corresponding to triples $F(N+1)$ and $F(0)$. A similar computation to the one carried out in the above paragraph, also carried out in \cite{code} and in Appendix~\ref{a:computations}, shows
	defining $M_{0}'$ as  $M_{0}$ without its first, second, eleventh and twelfth row, we have
\begin{equ}\label{e:det3}
	\text{det}(M_{0}') =-96 \alpha _1^5 \beta ^{11} C_{jk}^4 C_{j \ell}^5 C_{k\ell}^3\qquad \text{and} \qquad \text{det}(M_{F(N+1)}') =\frac{39}{2} \alpha _1^3 \beta ^4 \left(\beta -\alpha _1\right) C_{jk}^4 C_{j\ell} C_{k\ell}\,.
\end{equ}
	Again, these determinants are analytic functions of $\beta \in (0,Z_+(\mathcal E, E))$ and as such are nonzero except on a set of measure $0$. Combining \eref{e:det1} and \eref{e:det3} we see that the matrix $\mathcal A$ has rank $8 + 4\cdot(n-3) = 4n-4 = \text{dim}(T \mathcal X)$. Since upon changing $\alpha_1, \alpha_2$ this holds for any compatible choice  of the conserved quantities $(E, \mathcal E)$ we have shown the desired result.\qed\\[10pt]

\noindent\textbf{Acknowledgments:} \aa{We thank the anonymous referee
  for their thorough review and for pointing out a gap in the initial proof of \Cref{prop:dense}.} All authors thank the NSF  grant
NSF-DMS-1613337 for partial support during this project. AA also acknowledges the partial support of NSF-CCF-1934964 and GNAMPA - INdAM and OM thanks
NSF-DMS-2038056 for partial support during this project. JCM thanks David
Herzog  and Brendan Williamson for discussions at the start of these
investigations. JCM thanks the hospitality and support of the Institute
for Advanced Study. AA thanks the hospitality and the support of the Max Planck Institute of Mathematics in the Sciences and of the ScaDS AI institute of the University of Leipzig.

%%%%%%%%%%%%%%%%%%%%%%%%%%%%%%%%%%%%%%%%%%%%%

\bibliographystyle{plain}
\bibliography{refs}

%%%%%%%%%%%%%%%%%%%%%%%%%%%%%%%%%%%%%%%%%%%%%

%%%%%%%%%%%%%%%%%%%%%%%%%%%%%%%%%%%%%%%%%%%%%

\appendix

\section{Spanning computations for split Galerkin Euler equation}\label{a:computations}
The following Mathematica commands reproduce some of
the more tedious calculations from Section~\ref{sec:euler_alt2}. \om{There are three calculations: the determinants of two $4$-by-$4$ and one $8$-by-$8$ matrix, all of which can be computed by hand. In particular, these computer-based calculations are meant as an aid to the reader and are not a necessary component of our proof.} We have reformatted some of the outputs in \LaTeX\ to improve readability. The source Mathematica file can be found in \cite{code}.

\begin{mmaCell}[moredefined={V1, V2, V3, V4, W1, W2, W3, W4, V12, V13, V14, V23, V24, V34, W12, W13, W14, W23, W24, W34}]{Input}
V1=\{\mmaSub{C}{kl} \mmaSub{a}{k} \mmaSub{a}{l},0,\mmaSub{C}{jl} \mmaSub{a}{j} \mmaSub{a}{l},0,\mmaSub{C}{jk} \mmaSub{a}{j} \mmaSub{a}{k},0\};
V2=\{\mmaSub{C}{kl} \mmaSub{b}{k} \mmaSub{b}{l},0,0,\mmaSub{C}{jl} \mmaSub{a}{j} \mmaSub{b}{l},0,\mmaSub{C}{jk} \mmaSub{a}{j} \mmaSub{b}{k}\};
V3=\{0,\mmaSub{C}{kl} \mmaSub{a}{k} \mmaSub{b}{l},\mmaSub{C}{jl} \mmaSub{b}{j} \mmaSub{b}{l},0,0,\mmaSub{C}{jk} \mmaSub{b}{j} \mmaSub{a}{k}\};
V4=\{0,\mmaSub{C}{kl} \mmaSub{b}{k} \mmaSub{a}{l},0,\mmaSub{C}{jl} \mmaSub{b}{j} \mmaSub{a}{l},\mmaSub{C}{jk} \mmaSub{b}{j} \mmaSub{b}{k},0\};

W1=\mmaSub{\(\pmb{\partial}\)}{\{\{\mmaSub{a}{j},\mmaSub{b}{j},\mmaSub{a}{k},\mmaSub{b}{k},\mmaSub{a}{l},\mmaSub{b}{l}\}\}}V1.\{\mmaSubSup{\mmaUnd{\(\pmb{\eta}\)}}{j}{a},\mmaSubSup{\mmaUnd{\(\pmb{\eta}\)}}{j}{b},\mmaSubSup{\mmaUnd{\(\pmb{\eta}\)}}{k}{a},\mmaSubSup{\mmaUnd{\(\pmb{\eta}\)}}{k}{b},\mmaSubSup{\mmaUnd{\(\pmb{\eta}\)}}{l}{a},\mmaSubSup{\mmaUnd{\(\pmb{\eta}\)}}{l}{b}\}
W2=\mmaSub{\(\pmb{\partial}\)}{\{\{\mmaSub{a}{j},\mmaSub{b}{j},\mmaSub{a}{k},\mmaSub{b}{k},\mmaSub{a}{l},\mmaSub{b}{l}\}\}}V2.\{\mmaSubSup{\mmaUnd{\(\pmb{\eta}\)}}{j}{a},\mmaSubSup{\mmaUnd{\(\pmb{\eta}\)}}{j}{b},\mmaSubSup{\mmaUnd{\(\pmb{\eta}\)}}{k}{a},\mmaSubSup{\mmaUnd{\(\pmb{\eta}\)}}{k}{b},\mmaSubSup{\mmaUnd{\(\pmb{\eta}\)}}{l}{a},\mmaSubSup{\mmaUnd{\(\pmb{\eta}\)}}{l}{b}\}
W3=\mmaSub{\(\pmb{\partial}\)}{\{\{\mmaSub{a}{j},\mmaSub{b}{j},\mmaSub{a}{k},\mmaSub{b}{k},\mmaSub{a}{l},\mmaSub{b}{l}\}\}}V3.\{\mmaSubSup{\mmaUnd{\(\pmb{\eta}\)}}{j}{a},\mmaSubSup{\mmaUnd{\(\pmb{\eta}\)}}{j}{b},\mmaSubSup{\mmaUnd{\(\pmb{\eta}\)}}{k}{a},\mmaSubSup{\mmaUnd{\(\pmb{\eta}\)}}{k}{b},\mmaSubSup{\mmaUnd{\(\pmb{\eta}\)}}{l}{a},\mmaSubSup{\mmaUnd{\(\pmb{\eta}\)}}{l}{b}\}
W4=\mmaSub{\(\pmb{\partial}\)}{\{\{\mmaSub{a}{j},\mmaSub{b}{j},\mmaSub{a}{k},\mmaSub{b}{k},\mmaSub{a}{l},\mmaSub{b}{l}\}\}}V4.\{\mmaSubSup{\mmaUnd{\(\pmb{\eta}\)}}{j}{a},\mmaSubSup{\mmaUnd{\(\pmb{\eta}\)}}{j}{b},\mmaSubSup{\mmaUnd{\(\pmb{\eta}\)}}{k}{a},\mmaSubSup{\mmaUnd{\(\pmb{\eta}\)}}{k}{b},\mmaSubSup{\mmaUnd{\(\pmb{\eta}\)}}{l}{a},\mmaSubSup{\mmaUnd{\(\pmb{\eta}\)}}{l}{b}\};

V12=\mmaSub{\(\pmb{\partial}\)}{\{\{\mmaSub{a}{j},\mmaSub{b}{j},\mmaSub{a}{k},\mmaSub{b}{k},\mmaSub{a}{l},\mmaSub{b}{l}\}\}}V1.V2-\mmaSub{\(\pmb{\partial}\)}{\{\{\mmaSub{a}{j},\mmaSub{b}{j},\mmaSub{a}{k},\mmaSub{b}{k},\mmaSub{a}{l},\mmaSub{b}{l}\}\}}V2.V1;
V13=\mmaSub{\(\pmb{\partial}\)}{\{\{\mmaSub{a}{j},\mmaSub{b}{j},\mmaSub{a}{k},\mmaSub{b}{k},\mmaSub{a}{l},\mmaSub{b}{l}\}\}}V1.V3-\mmaSub{\(\pmb{\partial}\)}{\{\{\mmaSub{a}{j},\mmaSub{b}{j},\mmaSub{a}{k},\mmaSub{b}{k},\mmaSub{a}{l},\mmaSub{b}{l}\}\}}V3.V1;
V14=\mmaSub{\(\pmb{\partial}\)}{\{\{\mmaSub{a}{j},\mmaSub{b}{j},\mmaSub{a}{k},\mmaSub{b}{k},\mmaSub{a}{l},\mmaSub{b}{l}\}\}}V1.V4-\mmaSub{\(\pmb{\partial}\)}{\{\{\mmaSub{a}{j},\mmaSub{b}{j},\mmaSub{a}{k},\mmaSub{b}{k},\mmaSub{a}{l},\mmaSub{b}{l}\}\}}V4.V1;
V23=\mmaSub{\(\pmb{\partial}\)}{\{\{\mmaSub{a}{j},\mmaSub{b}{j},\mmaSub{a}{k},\mmaSub{b}{k},\mmaSub{a}{l},\mmaSub{b}{l}\}\}}V2.V3-\mmaSub{\(\pmb{\partial}\)}{\{\{\mmaSub{a}{j},\mmaSub{b}{j},\mmaSub{a}{k},\mmaSub{b}{k},\mmaSub{a}{l},\mmaSub{b}{l}\}\}}V3.V2;
V24=\mmaSub{\(\pmb{\partial}\)}{\{\{\mmaSub{a}{j},\mmaSub{b}{j},\mmaSub{a}{k},\mmaSub{b}{k},\mmaSub{a}{l},\mmaSub{b}{l}\}\}}V2.V4-\mmaSub{\(\pmb{\partial}\)}{\{\{\mmaSub{a}{j},\mmaSub{b}{j},\mmaSub{a}{k},\mmaSub{b}{k},\mmaSub{a}{l},\mmaSub{b}{l}\}\}}V4.V2;
V34=\mmaSub{\(\pmb{\partial}\)}{\{\{\mmaSub{a}{j},\mmaSub{b}{j},\mmaSub{a}{k},\mmaSub{b}{k},\mmaSub{a}{l},\mmaSub{b}{l}\}\}}V3.V4-\mmaSub{\(\pmb{\partial}\)}{\{\{\mmaSub{a}{j},\mmaSub{b}{j},\mmaSub{a}{k},\mmaSub{b}{k},\mmaSub{a}{l},\mmaSub{b}{l}\}\}}V4.V3;

W12=\mmaSub{\(\pmb{\partial}\)}{\{\{\mmaSub{a}{j},\mmaSub{b}{j},\mmaSub{a}{k},\mmaSub{b}{k},\mmaSub{a}{l},\mmaSub{b}{l}\}\}}V12.\{\mmaSubSup{\mmaUnd{\(\pmb{\eta}\)}}{j}{a},\mmaSubSup{\mmaUnd{\(\pmb{\eta}\)}}{j}{b},\mmaSubSup{\mmaUnd{\(\pmb{\eta}\)}}{k}{a},\mmaSubSup{\mmaUnd{\(\pmb{\eta}\)}}{k}{b},\mmaSubSup{\mmaUnd{\(\pmb{\eta}\)}}{l}{a},\mmaSubSup{\mmaUnd{\(\pmb{\eta}\)}}{l}{b}\};
W13=\mmaSub{\(\pmb{\partial}\)}{\{\{\mmaSub{a}{j},\mmaSub{b}{j},\mmaSub{a}{k},\mmaSub{b}{k},\mmaSub{a}{l},\mmaSub{b}{l}\}\}}V13.\{\mmaSubSup{\mmaUnd{\(\pmb{\eta}\)}}{j}{a},\mmaSubSup{\mmaUnd{\(\pmb{\eta}\)}}{j}{b},\mmaSubSup{\mmaUnd{\(\pmb{\eta}\)}}{k}{a},\mmaSubSup{\mmaUnd{\(\pmb{\eta}\)}}{k}{b},\mmaSubSup{\mmaUnd{\(\pmb{\eta}\)}}{l}{a},\mmaSubSup{\mmaUnd{\(\pmb{\eta}\)}}{l}{b}\};
W14=\mmaSub{\(\pmb{\partial}\)}{\{\{\mmaSub{a}{j},\mmaSub{b}{j},\mmaSub{a}{k},\mmaSub{b}{k},\mmaSub{a}{l},\mmaSub{b}{l}\}\}}V14.\{\mmaSubSup{\mmaUnd{\(\pmb{\eta}\)}}{j}{a},\mmaSubSup{\mmaUnd{\(\pmb{\eta}\)}}{j}{b},\mmaSubSup{\mmaUnd{\(\pmb{\eta}\)}}{k}{a},\mmaSubSup{\mmaUnd{\(\pmb{\eta}\)}}{k}{b},\mmaSubSup{\mmaUnd{\(\pmb{\eta}\)}}{l}{a},\mmaSubSup{\mmaUnd{\(\pmb{\eta}\)}}{l}{b}\};
W23=\mmaSub{\(\pmb{\partial}\)}{\{\{\mmaSub{a}{j},\mmaSub{b}{j},\mmaSub{a}{k},\mmaSub{b}{k},\mmaSub{a}{l},\mmaSub{b}{l}\}\}}V23.\{\mmaSubSup{\mmaUnd{\(\pmb{\eta}\)}}{j}{a},\mmaSubSup{\mmaUnd{\(\pmb{\eta}\)}}{j}{b},\mmaSubSup{\mmaUnd{\(\pmb{\eta}\)}}{k}{a},\mmaSubSup{\mmaUnd{\(\pmb{\eta}\)}}{k}{b},\mmaSubSup{\mmaUnd{\(\pmb{\eta}\)}}{l}{a},\mmaSubSup{\mmaUnd{\(\pmb{\eta}\)}}{l}{b}\};
W24=\mmaSub{\(\pmb{\partial}\)}{\{\{\mmaSub{a}{j},\mmaSub{b}{j},\mmaSub{a}{k},\mmaSub{b}{k},\mmaSub{a}{l},\mmaSub{b}{l}\}\}}V24.\{\mmaSubSup{\mmaUnd{\(\pmb{\eta}\)}}{j}{a},\mmaSubSup{\mmaUnd{\(\pmb{\eta}\)}}{j}{b},\mmaSubSup{\mmaUnd{\(\pmb{\eta}\)}}{k}{a},\mmaSubSup{\mmaUnd{\(\pmb{\eta}\)}}{k}{b},\mmaSubSup{\mmaUnd{\(\pmb{\eta}\)}}{l}{a},\mmaSubSup{\mmaUnd{\(\pmb{\eta}\)}}{l}{b}\};
W34=\mmaSub{\(\pmb{\partial}\)}{\{\{\mmaSub{a}{j},\mmaSub{b}{j},\mmaSub{a}{k},\mmaSub{b}{k},\mmaSub{a}{l},\mmaSub{b}{l}\}\}}V34.\{\mmaSubSup{\mmaUnd{\(\pmb{\eta}\)}}{j}{a},\mmaSubSup{\mmaUnd{\(\pmb{\eta}\)}}{j}{b},\mmaSubSup{\mmaUnd{\(\pmb{\eta}\)}}{k}{a},\mmaSubSup{\mmaUnd{\(\pmb{\eta}\)}}{k}{b},\mmaSubSup{\mmaUnd{\(\pmb{\eta}\)}}{l}{a},\mmaSubSup{\mmaUnd{\(\pmb{\eta}\)}}{l}{b}\};
\end{mmaCell}

\,\\
The computation for the determinant of the matrix $M_{F(i)}'$ for $i \in [n-3]\setminus \{N+1\}$ is given by
\begin{mmaCell}[moredefined={A1, V1, V2, V13, V34, W1, W2, W13, \
W34}]{Input}
A0 = Join[Transpose[\{V1,V2,V13,V34\}][[5;;]],Transpose[\{W1,W2,W13,W34\}][[5;;]]]
  /.\{\mmaSub{a}{j}->\mmaSub{\mmaUnd{\(\pmb{\alpha}\)}}{1}\
,\mmaSub{a}{k}->\mmaUnd{\(\pmb{\beta}\)},\mmaSub{a}{l}->\mmaSub{\mmaUnd{\(\pmb{\alpha}\)}}{2},\mmaSub{b}{j}->2*\mmaSub{\mmaUnd{\(\pmb{\alpha}\)}}{1},\mmaSub{b}{k}->\mmaUnd{\(\pmb{\beta}\)},\mmaSub{b}{l}->\mmaSub{\mmaUnd{\(\pmb{\alpha}\)}}{2},
      \mmaSup{\mmaSub{\mmaUnd{\(\pmb{\eta}\)}}{j}}{a}->1,\mmaSup{\mmaSub{\mmaUnd{\(\pmb{\eta}\)}}{k}}{a}->1,\mmaSup{\mmaSub{\mmaUnd{\(\pmb{\eta}\)}}{l}}{a}->1,\mmaSup{\mmaSub{\mmaUnd{\(\pmb{\eta}\)}}{j}}{b}->\
-1/2,\mmaSup{\mmaSub{\mmaUnd{\
\(\pmb{\eta}\)}}{k}}{b}->-1,\mmaSup{\mmaSub{\mmaUnd{\(\pmb{\eta}\)}}{l}}\
{b}->-1\};;
A0//MatrixForm
Simplify[Det[A0]]
\end{mmaCell}
\,

\noindent The matrix output is
\begin{equ}
	\qquad M_{F(i)}' = \left(
\begin{array}{cccc}
 \beta  \gamma _1 C_{jk} & 0 & \gamma _1^2 \gamma _2 j C_{jk} C_{j\ell} & -j\beta ^2 \gamma _2  C_{jk} C_{k\ell} \\
 0 & \beta  \gamma _1 C_{jk} & -\gamma _1^2 \gamma _2  C_{jk} C_{j\ell} & \beta ^2 \gamma _2 C_{jk} C_{k\ell} \\
 \beta  C_{jk}+\gamma _1 C_{jk} & 0 & C_{jk} C_{j\ell}\pc{-\frac{\gamma _1^2  }{j}+\gamma _2 \gamma _1 j -\gamma _2 \gamma _1 j }& \frac{\beta ^2 C_{jk} C_{k\ell}}{j} \\
 0 & \beta  C_{jk}-\gamma _1 C_{jk} & C_{jk} C_{j\ell} \pc{-\gamma _1^2  -\gamma _2 \gamma _1 k +\gamma _2 \gamma _1 } & \beta ^2 C_{jk} C_{k\ell} \\
\end{array}
\right)\,,
\end{equ}
and the corresponding determinant
\begin{equ}
	\det M_{F(i)}' = \frac{3}{2 j} C_{jk}^4 C_{jl} C_{kl} \beta ^3 \gamma _1^3 \gamma _2  \left(\beta +\beta  j^2+2 \gamma _2 j^2\right)\,.
\end{equ}\\[.3cm]
The computation for the determinant of the matrix $M_{F(N+1)}'$ is given by
\begin{mmaCell}[moredefined={A1, V1, V2, V13, V34, W1, W2, W13, \
W34}]{Input}
A1 = Join[Transpose[\{V1,V2,V13,V34\}][[5;;]],Transpose[\{W1,W2,W13,W34\}][[5;;]]]
  /.\{\mmaSub{a}{j}->\mmaSub{\mmaUnd{\(\pmb{\alpha}\)}}{1}\
,\mmaSub{a}{k}->\mmaUnd{\(\pmb{\beta}\)},\mmaSub{a}{l}->\mmaUnd{\(\pmb{\beta}\)},\mmaSub{b}{j}->2*\mmaSub{\mmaUnd{\(\pmb{\alpha}\)}}{1},\mmaSub{b}{k}->2*\mmaUnd{\(\pmb{\beta}\)},\mmaSub{b}{l}->\mmaUnd{\(\pmb{\beta}\)},
      \mmaSup{\mmaSub{\mmaUnd{\(\pmb{\eta}\)}}{j}}{a}->1,\mmaSup{\mmaSub{\mmaUnd{\(\pmb{\eta}\)}}{k}}{a}->1,\mmaSup{\mmaSub{\mmaUnd{\(\pmb{\eta}\)}}{l}}{a}->1,\mmaSup{\mmaSub{\mmaUnd{\(\pmb{\eta}\)}}{j}}{b}->-1/2,\mmaSup{\mmaSub{\mmaUnd{\
\(\pmb{\eta}\)}}{k}}{b}->-1/2,\mmaSup{\mmaSub{\mmaUnd{\(\pmb{\eta}\)}}{l}}{b}->-1\};;
A1//MatrixForm
Simplify[Det[A1]]
\end{mmaCell}
\,

\noindent The matrix output is
\begin{equ}
	M_{F(N+1)}' = \left(
\begin{array}{cccc}
 \alpha _1 \beta  C_{jk} & 0 & 2 \alpha _1^2 \beta  C_{jk} C_{j\ell} & -2 \beta ^3 C_{jk} C_{k\ell} \\
 0 & 2 \alpha _1 \beta  C_{jk} & -2 \alpha _1^2 \beta  C_{jk} C_{j\ell} & 2 \beta ^3 C_{jk} C_{k\ell} \\
 \alpha _1 C_{jk}+\beta  C_{jk} & 0 & \frac{3}{2} \alpha _1 \beta  C_{jk} C_{j\ell}-2 \alpha _1^2 C_{jk} C_{j\ell} & \frac{1}{2} \beta ^2 C_{jk} C_{k\ell} \\
 0 & 2 \beta  C_{jk}-\frac{\alpha _1 C_{jk}}{2} & -\frac{3}{2} \alpha _1 \beta  C_{jk} C_{j\ell}-2 \alpha _1^2 C_{jk} C_{j\ell} & \frac{7}{2} \beta ^2 C_{jk} C_{k\ell} \\
\end{array}
\right)\,,
\end{equ}
and the corresponding determinant
\begin{equ}
	\text{det} M_{F(N+1)}' = \frac{39}{2} \alpha _1^3 \beta ^4 \left(\beta -\alpha _1\right) C_{jk}^4 C_{j\ell} C_{k\ell}\,.
\end{equ}
\\[.3cm]

The computation for the determinant of the matrix $M_{0}'$ is given by
\begin{mmaCell}[addtoindex=4,moredefined={A2, V1, V2, V3, V4, V12, \
V13, V14, V23, W1, W2, W3, W4, W12, W13, W14, W23}]{Input}
A2 = Join[Transpose[\{V1,V2,V3,V4,V12,V13,V14,V23\}][[2;;5]],
  Transpose[\{W1,W2,W3,W4,W12,W13,W14,W23\}][[2;;5]]]
   /.\{\mmaSub{a}{j}\(\pmb{\to}\)\
\mmaSub{\mmaUnd{\(\pmb{\alpha}\)}}{1},\mmaSub{a}{k}\(\pmb{\to}\)\mmaUnd{\(\pmb{\beta}\)},\mmaSub{a}{l}\(\pmb{\to}\)\mmaUnd{\(\pmb{\beta}\)},\mmaSub{b}{j}\(\pmb{\to}\)2*\mmaSub{\mmaUnd{\(\pmb{\alpha}\)}}{1},\mmaSub{b}{k}\(\pmb{\to}\)\mmaUnd{\(\pmb{\beta}\)},\mmaSub{b}{l}\(\pmb{\to}\)\mmaUnd{\(\pmb{\beta}\)},
       \mmaSup{\mmaSub{\mmaUnd{\(\pmb{\eta}\)}}{j}}{a}\(\pmb{\to}\)1,\
\mmaSup{\mmaSub{\mmaUnd{\(\pmb{\eta}\)}}{k}}{a}\(\pmb{\to}\)1,\mmaSup{\mmaSub{\mmaUnd{\(\pmb{\eta}\)}}{l}}{a}\(\pmb{\to}\)1,\mmaSup{\mmaSub{\mmaUnd{\(\pmb{\eta}\)}\
}{j}}{b}\(\pmb{\to}\)-1/2,\mmaSup{\mmaSub{\mmaUnd{\(\pmb{\eta}\)}}{k}}{b}\(\pmb{\to}\)-1,\mmaSup{\mmaSub{\mmaUnd{\(\pmb{\eta}\)}}{l}}{b}\(\pmb{\to}\)-1\};
A2//MatrixForm
Simplify[Det[A2]]
\end{mmaCell}
\,

\noindent The first five columns of the matrix output are
\begin{equ}\small
	\begin{pmatrix}
 0 & 0 & \beta ^2 C_{k\ell} & \beta ^2 C_{k\ell} & 0  \\
 \alpha _1 \beta  C_{j\ell} & 0 & 2 \alpha _1 \beta  C_{j\ell} & 0 & \beta ^3 C_{j\ell} C_{k\ell}\\
 0 & \alpha _1 \beta  C_{j\ell} & 0 & 2 \alpha _1 \beta  C_{j\ell} & -\beta ^3 C_{j\ell} C_{k\ell} \\
 \alpha _1 \beta  C_{jk} & 0 & 0 & 2 \alpha _1 \beta  C_{jk} & \beta ^3 C_{jk} C_{k\ell}  \\
 0 & 0 & 0 & 0 & 0  \\
 \alpha _1 C_{j\ell}+\beta  C_{j\ell} & 0 & -2 \alpha _1 C_{j\ell}-\frac{\beta  C_{j\ell}}{2} & 0 & -\beta ^2 C_{j\ell} C_{k\ell} \\
 0 & \beta  C_{j\ell}-\alpha _1 C_{j\ell} & 0 & 2 \alpha _1 C_{j\ell}-\frac{\beta  C_{j\ell}}{2} & -\beta ^2 C_{j\ell} C_{k\ell} \\
 \alpha _1 C_{jk}+\beta  C_{jk} & 0 & 0 & -2 \alpha _1 C_{jk}-\frac{\beta  C_{jk}}{2} & -\beta ^2 C_{jk} C_{k\ell} \\
\end{pmatrix}\,,
\end{equ}
while the last three columns read
\begin{equ}\small
	\begin{pmatrix}
		 \alpha _1 \left(-\beta ^2\right) C_{j\ell} C_{k\ell} & \alpha _1 \left(-\beta ^2\right) C_{jk} C_{k\ell} & \alpha _1 \left(-\beta ^2\right) C_{jk} C_{k\ell}\\
		  0 & 2 \alpha _1^2 \beta  C_{jk} C_{j\ell} & -2 \alpha _1^2 \beta  C_{jk} C_{j\ell} \\
			0 & -2 \alpha _1^2 \beta  C_{jk} C_{j\ell} & 2 \alpha _1^2 \beta  C_{jk} C_{j\ell} \\
			 2 \alpha _1^2 \beta  C_{jk} C_{j\ell} & 0 & 0\\
			-\beta ^2 C_{j\ell} C_{k\ell} & -\beta ^2 C_{jk} C_{k\ell} & -\beta ^2 C_{jk} C_{k\ell} \\
		  0 & \frac{3}{2} \alpha _1 \beta  C_{jk} C_{j\ell}-2 \alpha _1^2 C_{jk} C_{j\ell} & 2 \alpha _1^2 C_{jk} C_{j\ell}-\frac{3}{2} \alpha _1 \beta  C_{jk} C_{j\ell}\\
			0 & -\frac{3}{2} \alpha _1 \beta  C_{jk} C_{j\ell}-2 \alpha _1^2 C_{jk} C_{j\ell} & \frac{3}{2} \alpha _1 \beta  C_{jk} C_{j\ell}+2 \alpha _1^2 C_{jk} C_{j\ell} \\
			 \frac{3}{2} \alpha _1 \beta  C_{jk} C_{j\ell}-2 \alpha _1^2 C_{jk} C_{j\ell} & 0 & 0
	\end{pmatrix}\,,
\end{equ}
and the corresponding determinant is
\begin{equ}
	\det M_0' = -96 \alpha _1^5 \beta ^{11} C_{jk}^4 C_{j\ell}^5 C_{k\ell}^3\,.
\end{equ}
% %%%%%%%%%%%%%%%%%%%%%%%%%%%%%%%%%%%%%%%%%%%%%
%
% %%%%%%%%%%%%%%%%%%%%%%%%%%%%%%%%%%%%%%%%%%%%%
%

\end{document}